\theoremstyle{plain}
\newtheorem{thm}{Theorem}
\newtheorem{prop}{Proposition}[section]
\newtheorem{lem}[prop]{Lemma}
\newtheorem{cor}[prop]{Corollary}
\newtheorem{defi}[prop]{Definition}
\newtheorem{rmk}[prop]{Remark}
\newcommand {\R} {\mathbb{R}} \newcommand {\Z} {\mathbb{Z}}
\newcommand {\T} {\mathbb{T}} \newcommand {\N} {\mathbb{N}}
\newcommand {\C} {\mathbb{C}} 
\newcommand {\p} {\partial}
\newcommand {\supp} {\text{supp}}
\newcommand {\diam} {\text{diam}}
\newcommand {\rank} {\text{rank}}
\DeclareMathOperator {\dist} {dist}
\DeclareMathOperator {\diag} {diag}
\DeclareMathOperator{\inte} {int}
\DeclareMathOperator{\Per} {Per}
\DeclareMathOperator{\F} {\mathcal{F}}
\title[Energy Scaling for Higher Order Laminates]{On the Energy Scaling Behaviour of Singular Perturbation Models with Prescribed Dirichlet Data Involving Higher Order Laminates}
\author{Angkana R\"uland}
\address{Institut f\"ur Angewandte Mathematik, Ruprecht-Karls-Universit\"at Heidelberg, Im Neuenheimer Feld 205, 69120 Heidelberg, Germany}
\email{Angkana.Rueland@uni-heidelberg.de}
\author{Antonio Tribuzio}
\address{Institut f\"ur Angewandte Mathematik, Ruprecht-Karls-Universit\"at Heidelberg, Im Neuenheimer Feld 205, 69120 Heidelberg, Germany}
\email{Antonio.Tribuzio@uni-heidelberg.de}
\date{\today}
\begin{document}

\begin{abstract}
Motivated by complex microstructures in the modelling of shape-memory alloys and by rigidity and flexibility considerations for the associated differential inclusions, in this article we study the energy scaling behaviour of a simplified $m$-well problem without gauge invariances. Considering wells for which the lamination convex hull consists of one-dimensional line segments of increasing order of lamination, we prove that for prescribed Dirichlet data the energy scaling is determined by the \emph{order of lamination of the Dirichlet data}. This follows by deducing (essentially) matching upper and lower scaling bounds. For the \emph{upper} bound we argue by providing iterated branching constructions, and complement this with ansatz-free \emph{lower} bounds. These are deduced by a careful analysis of the Fourier multipliers of the associated energies and iterated ``bootstrap arguments'' based on the ideas from \cite{RT21}. Relying on these observations, we study models involving laminates of arbitrary order.
\end{abstract}

\maketitle
\tableofcontents
\addtocontents{toc}{\setcounter{tocdepth}{1}}

\section{Introduction}

Motivated by the study of multi-well energies in the vector-valued calculus of variations \cite{D,C95,C99,CM99,D04,P} and, in particular, the modelling of shape-memory alloys within the phenomenological theory of martensite \cite{BJ92,B,M1}, in this article we study the scaling behaviour of a simplified $m$-well problem without gauge invariance (i.e.~without frame-indifference and hence without $SO(n)$ or $\text{Skew}(n)$ symmetries for prescribed displacement boundary conditions. This leads to highly non-convex singularly perturbed energies consisting of an ``elastic'' and a ``surface energy'' contribution: For $\epsilon>0$ and $p\in [1,\infty)$ we consider energies of the following type
\begin{align}
\label{eq:elast_gen}
E_\epsilon^{(p)}(u):= \int\limits_{\Omega} \dist^p(\nabla u, K)dx + \epsilon E_{surf}(u).
\end{align}
For the corresponding models from elasticity, $\Omega\subset \R^{n}$ denotes the reference configuration, the set $K \subset \R^{n\times n}$ consists of the possible energy wells and $E_{surf}(u)$ models the surface energy which penalizes high oscillations between the wells. In what follows, we will restrict our attention to (two types of) sharp interface models, e.g. $E_{surf}(u):= \|D^2 u\|_{TV}$, where $\| \cdot \|_{TV}$ denotes the total variation norm, see \cite{CC15,CO,CO1,KK,KKO13,Rue16b} and \cite[Chapter 12]{B} for similar models. The contributions $\int\limits_{\Omega} \dist^p(\nabla u, K)dx $ models the elastic energy and the exponent $p$ is often taken to be equal to two.

Motivated by the hierarchical structures predicted by the phenomenological theory of martensite \cite{BJ92,B04,CS13,CS15,Bhat2,Bhat}, the theoretical analysis of convex hulls and (higher order) laminates \cite{KMS03} and the relevance of scaling in possibly selecting particular classes of (wild or regular) microstructure \cite{RTZ19, RZZ18}, we study the \emph{influence of the order of lamination} of the Dirichlet boundary conditions on the energy scaling behaviour (see Definition \ref{defi:laminates} in Section \ref{sec:not_pre} for the definition of laminates of finite order). In order to derive quantitative upper and (essentially) matching lower bounds, we consider settings which, on the one hand, are flexible enough to allow for higher order laminates, but which, on the other hand, are still rather rigid in the sense that we restrict our attention to energy wells in which only a single rank-one connection is present and where the lamination convex hull is a one-dimensional object. This leads to a special class of finite order laminates which we will here refer to as ``staircase type'' (see \cite{CFM05} for related but not equal staircase laminates).

As a main objective of this article, we study how the hierarchical structure of Dirichlet boundary data in terms of its order of lamination is reflected in energy scaling results. In this context, we
\begin{itemize}
\item revisit the two-well problem. Here we study the detailed scaling of the \emph{two-well problem} in dependence of the choice $p\in [1,\infty)$. This is motivated by the observation that for $p=1$ twinning structures provide the same scaling as branching structures (see \cite{CM99} and also \cite{Lorent06}). In accordance with the results from \cite{KM1,KM2}, for $p>1$ we show that branching structures provide a better scaling behaviour than simple laminates. Our proof follows the strategy from \cite{CC15}. 
We highlight that such behaviour was hinted at in \cite{CM99} and upper bounds for such type of constructions with an arbitrarily large number of wells consisting of vectors were studied in a finite element setting in a non-published manuscript by Chipot and Müller \cite{CM97}. 
We use the upper bound constructions for $p=2$ as building blocks in the higher order branching constructions in the later sections.
\item consider a particular \emph{three-well model} which had been introduced in \cite{Lorent06} for which double laminates may be enforced by corresponding boundary conditions. While \cite{Lorent06} investigated this problem for $p=1$ in which case both twins and branching twins are expected to provide the same scaling behaviour, we explore the setting for $p=2$ which is expected to enforce (doubly) branched structures. In order to deduce the (essentially) optimal lower bounds, we rely on a combination of the ideas from \cite{RT21} and \cite{KW16} (which in turn build on the earlier works \cite{CO,CO1,KKO13}). We prove the typical $L^2$-based branched second-order lamination scaling of size $\epsilon^{1/2}$. Both the upper and lower bounds show a hierarchical structure which is not present in the two-well setting and is a consequence of the full matrix-valued setting.
\item explore the scaling behaviour for a \emph{three-dimensional four-well} model which allows for laminates of order three. Using similar ideas as for the three-well setting, we prove that the order of branching of the boundary data determines the energy scaling behaviour. We show that the top-order branched laminates scale of the order $\epsilon^{2/5}$ and provide (essentially sharp) scaling results also for the lower-order laminates.
\item provide an example of a ``staircase laminate" in $n$ dimensions with \emph{$n+1$ energy wells} and a ``one-dimensional laminar convex hull" with a scaling of the form $\epsilon^{2/(n+2)}$ for the top order laminates. We provide the full hierarchy of scalings depending on the lamination order of the Dirichlet data.
\end{itemize}

We discuss the corresponding models and results in detail in the following sections.

\begin{figure}[tb]
\begin{center}
\includegraphics{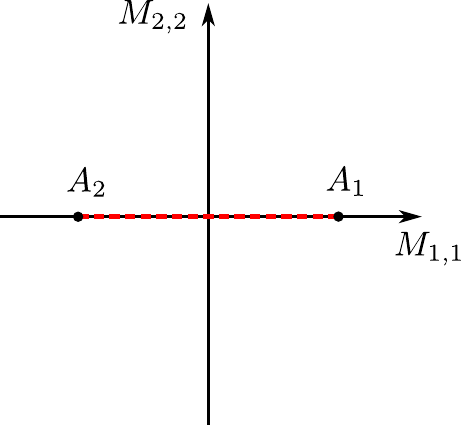}
\end{center}
\caption{Representation of the set $K_2$ on the plane of diagonal matrices (see Section \ref{sec:Lpintro}).
The red hashed segment depicts the set $K_2^{(lc)}$.
}
\label{fig:K2}
\end{figure}

\subsection{First-order laminates and the two-well problem: $L^p$ elastic energies and minimizers}
\label{sec:Lpintro}

We first investigate the problem of Dirichlet boundary data corresponding to first order laminates. This is a well-studied setting \cite{CO,CO1,CC15,C1,CT05,CZ16,CDZ17,SPDBJ20} which builds on the foundational results from \cite{KM1,KM2}. We emphasize that in this setting even finer properties of minimizers such as their asymptotic self-similarity have been studied \cite{C1}. 
Relying on the techniques from \cite{CC15}, we here explore the scaling behaviour of such a setting in which we allow for different choices of $p$ in \eqref{eq:elast_gen}. More precisely, in \eqref{eq:elast_gen} we select $p\in [1,\infty)$ arbitrary but fixed, define the surface energy to be given by $E_{surf}(u):=\|D^2 u\|_{TV}$ and choose the set $K$ to be equal to
\begin{align*}
K_2 = \left\{ \begin{pmatrix} 1 & 0 \\ 0 & 0 \end{pmatrix},  \begin{pmatrix} -1 & 0 \\ 0 & 0 \end{pmatrix}\right\},
\end{align*}
see Figure \ref{fig:K2}.
We consider the first-order laminate $F\in K^{(lc)}_2 \setminus K_2 =\left\{ \begin{pmatrix} \mu & 0 \\ 0 & 0 \end{pmatrix}: \ |\mu|<1 \right\}$ (see Definition \ref{defi:laminates}) as affine boundary condition.
We remark that the choice of $K_2$ is generic, if one requires it to contain a rank-one connection and that this form and the two-dimensional setting may thus be assumed without loss of generality. Further, while -- in view of Taylor approximations of general stored energy functions -- the choice $p=2$ often is the most natural choice, also other values of $p\in [1,\infty)$ have been considered in the literature and arise in applications, e.g. in plasticity. In this context, we prove the following matching lower and upper scaling bounds:

\begin{thm}
\label{thm:p_dependence_two_wells}
Let $\Omega = [0,1]^2$, let $E_{\epsilon}^{(p)}$ be as in \eqref{eq:elast_gen} with $K=K_2$ and $E_{surf}(u) = \|D^2 u\|_{TV}$, and let $F \in K_2^{(lc)}\setminus K_2$.
Let 
\begin{align}\label{eq:en_p_min}
E_{\epsilon}^{(p)}(F):=\inf\limits_{u \in \mathcal{A}_F^p} E_{\epsilon}^{(p)}(u),
\end{align}

where
\begin{align*}
\mathcal{A}_F^p:=\{u\in W^{1,p}_{loc}(\R^2, \R^2): \ u(x) = Fx + b \mbox{ in } \R^2 \setminus \Omega \mbox{ for some } b \in \R^2\}.
\end{align*}
Then, there exist constants $0<c\leq C$ such that for every $\epsilon\in(0,1)$
\begin{align*}
c\epsilon^{\frac{p}{p+1}}\leq E_{\epsilon}^{(p)}(F) \leq C \epsilon^{\frac{p}{p+1}},
\end{align*}
where both $c$ and $C$ depend on $p$, $F$ and $K_2$.
\end{thm}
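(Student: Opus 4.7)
The plan is to show matching upper and lower scaling bounds, both of order $\epsilon^{p/(p+1)}$.

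For the \emph{upper bound}, I would use a Kohn--M\"uller type self-similar branching construction, adapted to the $L^p$ elastic energy as in \cite{CC15}. Fix a coarse period $h_0$ to be optimised, set $h_k = 2^{-k} h_0$, and partition $[0,1]\times[0,1/2]$ (the other half follows by reflection) into horizontal strips of heights $t_k$. In the innermost strip, $\nabla u$ alternates between the two wells in vertical stripes of width $h_0$ with volume fractions $(1\pm\mu)/2$; moving toward $x_2 = 0$, one refines from scale $h_k$ to $h_{k+1}$ by merging pairs of neighbouring stripes through $Y$-shaped reconnections, with $u^{(2)}$ chosen to preserve continuity. Within each reconnection $|\partial_2 u^{(1)}|\sim h_k/t_k$ on a region of total area $\sim t_k$; the resulting per-strip elastic and surface costs are of order $h_k^p t_k^{1-p}$ and $\epsilon(t_k/h_k + 1/h_k)$ respectively. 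Optimising $t_k$ strip-by-strip gives $t_k \sim h_k^{(p+1)/p}\epsilon^{-1/p}$ and a per-strip total of order $\epsilon^{(p-1)/p} h_k^{1/p}$; the geometric constraint $\sum_k t_k \sim 1$ fixes the outermost scale $h_0 \sim \epsilon^{1/(p+1)}$, and summation yields total energy $\sim \epsilon^{p/(p+1)}$. A standard horizontal interpolation near $x_1 = 0,1$ and a constant extension to $\R^2\setminus\Omega$ complete the construction.

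For the \emph{lower bound}, I would follow the slicing strategy of \cite{CC15}. Setting $v := u^{(1)} - \mu x_1$, we have $v = 0$ on $\partial\Omega$ and $\partial_1 v$ is $L^p$-close to the two-point set $\{1-\mu, -1-\mu\}$ with zero mean on every horizontal slice. Heuristically, one associates to each horizontal slice $x_2 \in (0,1)$ a local lamination scale $h(x_2)$: the sharp-interface surface energy controls the number of interfaces on the slice and gives $\epsilon\|D^2 u\|_{TV} \gtrsim \epsilon \int_0^1 h(x_2)^{-1}\,dx_2$, while the elastic energy satisfies $\int_\Omega \dist^p(\nabla u, K_2)\,dx \gtrsim \int_0^1 h(x_2)^p\,dx_2$, because the oscillatory interior trace $v(\cdot, x_2)$ of amplitude $\sim h(x_2)$ must be damped to the vanishing trace at $x_2 = 0, 1$, forcing a corresponding $L^p$ norm of $\partial_2 u^{(1)}$. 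Minimising the competition $h\mapsto \epsilon/h + h^p$ pointwise in $x_2$ gives optimum $h \sim \epsilon^{1/(p+1)}$ with value $\epsilon^{p/(p+1)}$, yielding the lower bound after integration.

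The principal difficulty lies in making the lower bound rigorous for general $p$. For $p = 2$ a direct Plancherel-based Fourier argument of the Kohn--M\"uller type applies and the multiplier analysis is straightforward. For $p \neq 2$ the $L^2$ orthogonality is unavailable; one must instead work on one-dimensional slices, using a $BV$--$L^p$ interpolation in the $x_2$ direction to relate the oscillatory content at scale $h(x_2)$ to the $L^p$ norm of the transverse derivative, and then reassemble via Fubini and H\"older. This is precisely the role of the template from \cite{CC15}; once set up, the parameter optimisation is algebraically the same as in the upper bound. Tracking constants in $\mu$ and $p$, and handling the borderline case $p = 1$ (which must be done in $BV$ rather than $W^{1,1}$), is then routine.
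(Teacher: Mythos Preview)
Your upper bound is essentially the paper's: a self-similar branching laminate whose per-cell costs are $\ell^{p+1}/h^{p-1}$ (elastic) and $\epsilon h$ (surface), assembled over geometrically refining generations and optimised to $\epsilon^{p/(p+1)}$. Whether one fixes a ratio $\theta$ for the strip heights (as the paper does, via Lemma~\ref{prop:vert-bra}) or optimises each $t_k$ separately is immaterial.

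Your lower-bound sketch, however, does not describe the CC15 template it invokes, and the mechanism you propose would be hard to make rigorous. Introducing a slice-dependent lamination scale $h(x_2)$ and claiming $\int_\Omega\dist^p(\nabla u,K_2)\,dx \gtrsim \int_0^1 h(x_2)^p\,dx_2$ from ``damping the oscillatory trace to zero at $x_2=0,1$'' does not follow: oscillation amplitude at a single height does not, by itself, force $|\partial_2 u_1|$ to be large on that slice, and turning this into a genuine varying-scale argument requires substantially more work than a $BV$--$L^p$ interpolation plus Fubini. The paper avoids all of this by working with a \emph{single} scale $\mu$. One selects, by averaging, a vertical stripe $S=[s,s+\mu]\times[0,1]$ carrying at most $\mu E_\epsilon^{(p)}(u)$ of the total energy; averages $\nabla u$ over $x_1$ and projects the $(1,1)$ entry onto $\{A_{11},B_{11}\}$ to obtain a function $f_1(x_2)$; uses a one-dimensional $BV$-Poincar\'e inequality in $x_1$ to bound $\|\partial_1 u_1-f_1\|_{L^1(S)}$ by $\mu$ times the surface energy plus (via H\"older) $\mu E_\epsilon^{(p)}(u)^{1/p}$; integrates once more in $x_1$; and separately bounds $\|u_1-F_{11}x_1\|_{L^1(S)}\le\|\partial_2 u_1\|_{L^1(S)}\le \mu E_\epsilon^{(p)}(u)^{1/p}$ using the Dirichlet condition at $x_2=0$. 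The triangle inequality then gives
\[
\dist(F,K_2)\,\mu^2 \;\lesssim\; \frac{\mu^3}{\epsilon}\,E_\epsilon^{(p)}(u) \;+\; \mu\, E_\epsilon^{(p)}(u)^{1/p},
\]
and optimising $\mu\sim\epsilon^{1/(p+1)}$ yields the bound. The whole argument runs in $L^1$ after a single H\"older step; no scale function $h(x_2)$, no interpolation in $x_2$, and no separate treatment of $p=1$ versus $p>1$ is needed.
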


We emphasize that these types of upper scaling bounds can already be found in the unpublished manuscript \cite{CM97} which was made available to us by the authors. In deducing our scaling bounds, we follow the very robust strategy from \cite{CC15}.

We remark that the restriction to $\Omega = [0,1]^2$ is just for convenience and that any (non-degenerate) sufficiently regular domain yields an analogous result.
Let us further point out several aspects concerning Theorem \ref{thm:p_dependence_two_wells}: Firstly, for $p=2$ we recover the well-known $\epsilon^\frac{2}{3}$ scaling result (see \cite{KM1}). Secondly, for $p=1$ we obtain the $\epsilon^\frac{1}{2}$ behaviour from the discrete model from \cite{CM99} (see \cite{L01} for work relating discrete and continuum scalings). While for $p\in (1,\infty)$ the upper bound constructions are obtained through branched twins and the lower bound constructions distinguish between branched twins and non-branched twins and favour branching (see \cite{KM1,KM2} for a first observation on this), this is not the case for $p=1$. For $p=1$ the scaling behaviour of branched and non-branched twins coincides. 

Also, upper bound constructions for higher order laminates confirm this (however in this case only for second order laminates matching lower bounds are known, see the discussion in Sections \ref{sec:three_wellsinto}, Appendix \ref{sec:second-order} and Remark \ref{rmk:Lorent} below).
\begin{figure}[t]
\begin{center}
\includegraphics{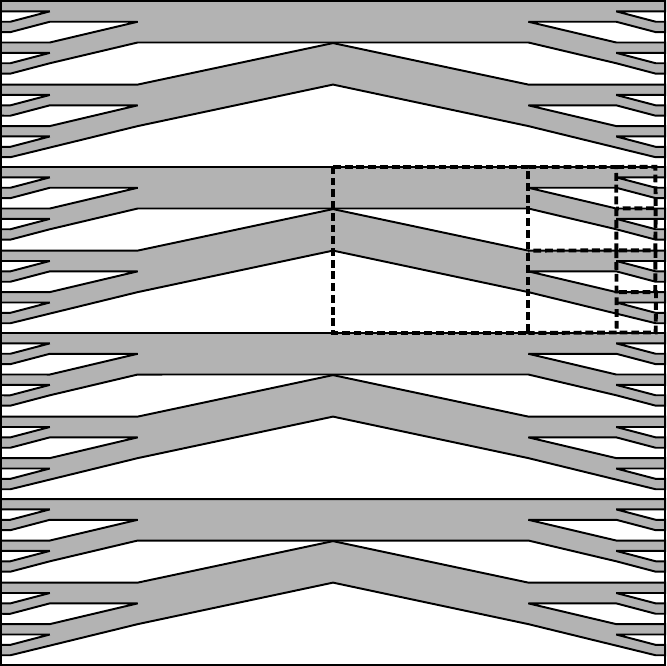}
\quad
\includegraphics{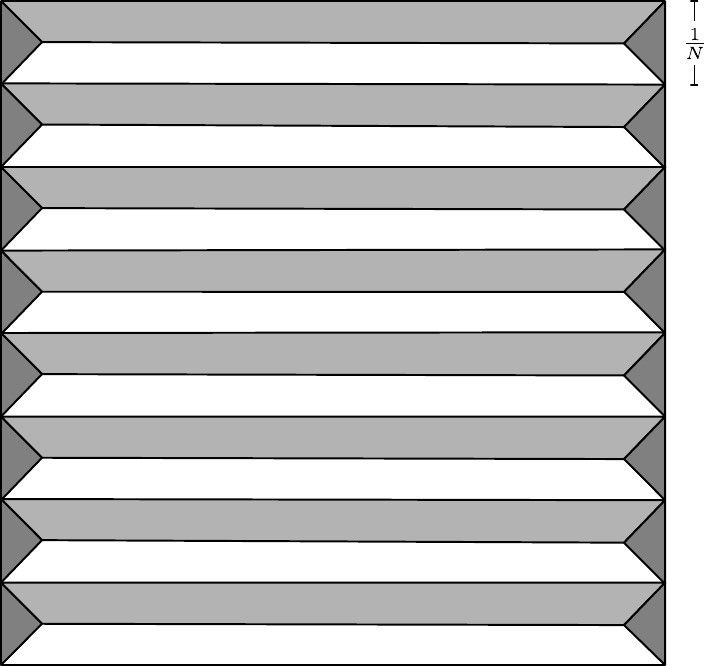}
\end{center}
\caption{An illustration of the microstructures used in the heuristic explanation on the differences between $p=1$ and $p\in (1,\infty)$. On the left a branching construction.
The hashed lines mark the building blocks of different refinement generations.
On the right a simple laminate.
The darker regions represent the boundary layer.
}
\label{fig:intro1}
\end{figure}
\\
Let us give a heuristic back-of-the-envelope argument for this difference between $p \in (1,\infty)$ and $p=1$ by considering upper-bound constructions for $p=2$ and $p=1$. Arguing, for instance, as in the computations in \cite[Lemma 2.4]{CC15}, we obtain the following energy contributions for upper-bound constructions.

\smallskip
\centerline{\emph{$L^2$-based energies:}}
\smallskip

\noindent In the case with branching, one obtains (see Lemma \ref{lem:loc-vert})
\begin{itemize}
\item for the elastic energy per building block a contribution of $(\frac{h}{\ell})^2h\ell\sim\frac{h^3}{\ell}$,
\item for the surface energy $\epsilon \ell$,
\end{itemize}
where $h$ and $\ell$ denote respectively the vertical and horizontal length of the block, being $h \sim \frac{1}{2^jN}$ and $\ell \sim \theta^j$ where $j$ denotes the \emph{generation} of the block of the self-similar construction and $N$ its frequency, see Figure \ref{fig:intro1} (left panel).
Here $\theta$ is a geometric constant smaller then $1$.
There are no essential contributions that come from the interpolation layer which are different from the ones above.
So, after summation and non-dimensionalization, the energy is given by 
\begin{align*}
E_\epsilon^{(2)}(u) \sim N \Big(\frac{1}{N^3} + \epsilon\Big),
\end{align*}
where $u$ is the construction described above
(Lemma \ref{prop:vert-bra}).
This implies the choice $N \sim \epsilon^{-\frac{1}{3}}$ when looking for optimal constructions (in terms of scaling) which leads to an energy scaling of the order $\epsilon^{\frac{2}{3}}$.\\
Compared to this, the scaling without branching would correspond to $\epsilon^{\frac{1}{2}}$ 
which is strictly larger for $\epsilon$ small: Indeed,
\begin{itemize}
\item the elastic energy contribution would consist of a boundary layer that scales as $N^{-1}$,
\item the surface energy originating from fine laminates of size $1\times N^{-1}$ would be given by $E_{surf} \sim N$,
\end{itemize}
(Figure \ref{fig:intro1}, right panel).
Thus, optimization of $N$ in terms of $\epsilon$ would result in an $\epsilon^{\frac{1}{2}}$ scaling behaviour.

\smallskip
\centerline{\emph{$L^1$-based energies:}}
\smallskip

\noindent In this case one has (with branching):
\begin{itemize}
\item elastic energy per building block $\left( \frac{h}{\ell} \right) h \ell \sim h^2$,
\item surface energy $\epsilon \ell$,
\end{itemize}
per building block.
The total energy then becomes after summation and non-dimensionalization
\begin{align*}
E_{\epsilon}^{(1)}(u)\sim N \Big(\frac{1}{N^2} + \epsilon\Big).
\end{align*}
Hence, on the one hand, the choice $N \sim \epsilon^{-\frac{1}{2}}$ is optimal for such constructions, leading to a scaling of order $\epsilon^\frac{1}{2}$ for branching constructions.
On the other hand, the scaling of twins without branching does not change, since the twinning heuristics from above remain true for $p=1$, too. As a consequence, for $p=1$, the scaling of branched and twinned construction is the same.

We remark that in addition to the full scaling result from Theorem \ref{thm:p_dependence_two_wells}, we also provide upper bound constructions for the setting of Dirichlet boundary data $F$ of higher-order lamination and with general $p\in [1,\infty)$ (see Appendix \ref{sec:second-order}).

\begin{figure}[t]
\begin{center}
\includegraphics{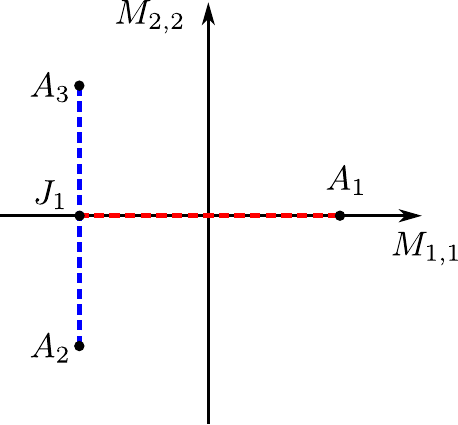}
\end{center}
\caption{The set of matrices $K_3$ from Section \ref{sec:three_wellsinto}.
In blue $K^{(1)}_3$, in red $K^{(lc)}_3\setminus K^{(1)}_3 = K^{(2)}_3\setminus K_3^{(1)}$.
$J_1$ is rank-$1$-connected to all the matrices of $K_3$.}
\label{fig:K3}
\end{figure}

\subsection{Three wells and second-order laminates}
\label{sec:three_wellsinto}

In this section we consider a problem with three wells giving rise to laminations of order up to two. Contrary to the two-well setting, this is a consequence of the matrix-valued differential inclusion which allows for hierarchies of laminations.
For simplicity, we focus on the following explicit model setting in which we make the following choice for the set of wells:
\begin{align*}
K_3:= \left\{A_1, A_2, A_3 \right\}, \mbox{ where }
A_1=\begin{pmatrix}1&0\\0&0\end{pmatrix},\,
A_2=\begin{pmatrix}-1&0\\0&-1\end{pmatrix},\,
A_3=\begin{pmatrix}-1&0\\0&1\end{pmatrix}.
\end{align*}
We however emphasize that the results from below could also be formulated more generally. We observe that $A_2, A_3$ are rank-one connected, that $A_1$ is neither rank-one connected to $A_2$ nor to $A_3$, that the laminates of first order are given by 
\begin{align*}
K_3^{(1)}\setminus K_3=\left\{
\begin{pmatrix} -1 & 0 \\ 0 & \mu \end{pmatrix}: \ |\mu| < 1 \right\},
\end{align*}
those of the second order by
\begin{align*}
K_3^{(2)}\setminus K_3^{(1)}=\left\{ 
\begin{pmatrix} \nu & 0 \\ 0 & 0 \end{pmatrix}: \ |\nu|<1
\right\},
\end{align*}
and that $K_3^{(lc)}= K_3^{(2)}$ (see Definition \ref{defi:laminates} for the definition of the order of lamination).
All of these sets of matrices are depicted in Figure \ref{fig:K3}.
In the sequel, we prove that for the problem with prescribed Dirichlet data, the order of lamination determines the scaling of the singularly perturbed elastic energy. To this end, we consider an energy of a similar type as in Section \ref{sec:Lpintro} with only slight differences which are mainly due to technical considerations. More precisely, in the following discussion, analogously to \cite{CO1,CO,RT21}, we consider
\begin{align}
\label{eq:energyK3}
E_{\epsilon,3}(u,\chi):= E_{el}(u,\chi) + \epsilon E_{surf}(\chi):= \int\limits_{\Omega}|\nabla u - \chi|^2 dx + \epsilon \sum\limits_{j=1}^{3} \|D\chi_j \|_{TV},
\end{align}
where on the left hand side of \eqref{eq:energyK3} we have
\begin{align*}
\chi := \diag(\chi_1-\chi_2-\chi_3, -\chi_2 + \chi_3)=\sum_{j=1}^3 \chi_j A_j: (0,1)^2 \rightarrow \R^{2\times 2},
\end{align*}
and, for $j\in \{1,2,3\}$, $\chi_j \in BV((0,1)^2;\{0,1\})$ such that
\begin{align*}
\sum\limits_{j=1}^{3}\chi_j = 1.
\end{align*}
The functions $\chi_j$, $j\in\{1,2,3\}$, play the role of phase indicators, showing to which of the energy wells the deformation is closest.
It is expected that the energies in \eqref{eq:elast_gen} with $p=2$ and $K=K_3$ behave analogously. Due to the very helpful Fourier characterization (see Section \ref{sec:prelim_lower}), we here focus on the setting outlined in \eqref{eq:energyK3}.

\begin{thm}[Scaling of the three-well problem]
\label{thm:K3}
Let $\Omega  = [0,1]^2$, let $E_{\epsilon,3}(\cdot,\cdot)$ be as \eqref{eq:energyK3} and assume that $\chi$ is as above. For $F\in K^{(lc)}_3\setminus K_3$ let 
\begin{align*}
\mathcal{A}_F:=\{u\in W^{1,2}_{loc}(\R^2;\R^{2}): \ u(x) = Fx+b \mbox{ in } \R^2 \setminus \Omega \mbox{ for some $b\in \R^2$}\}.
\end{align*}
Set
\begin{align}
\label{eq:energyK3_min}
E_{\epsilon,3}(F):= \inf\limits_{\chi}\inf\limits_{u \in \mathcal{A}_F} E_{\epsilon,3}(u,\chi).
\end{align}
\begin{itemize}
\item[(i)] If $F \in K^{(1)}_{3}\setminus K_3$, then there exist constants $0<c\le C$ such that for every $\epsilon\in(0,1)$
\begin{align*}
c\epsilon^\frac{2}{3} \leq E_{\epsilon,3}(F) \leq C \epsilon^\frac{2}{3}.
\end{align*}
\item[(ii)] If $F\in K^{(2)}_{3}\setminus K^{(1)}_3$, then there exist constants $C\geq c>0$, $\epsilon_0 = \epsilon_0(n,F)>0$ such that for every $\epsilon\in(0,\epsilon_0)$
\begin{align*}
c\epsilon^{\frac{1}{2}} \leq E_{\epsilon,3}(F) \leq C \epsilon^\frac{1}{2}.
\end{align*}
\end{itemize}
All constants $c,C$ depend on $F$ and $K_3$.
\end{thm}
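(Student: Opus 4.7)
The plan is to prove (i) and (ii) by matching upper and lower bounds, following a unified philosophy: the upper bounds come from hierarchical branching constructions whose depth equals the lamination order of the boundary datum $F$, while the lower bounds come from a Fourier-multiplier analysis which, in case (ii), must be iterated in a ``bootstrap'' step in the spirit of \cite{RT21}.

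\textbf{Upper bounds.}
For (i), $F = \diag(-1,\mu)\in K_3^{(1)}\setminus K_3$ is a convex combination of $A_2$ and $A_3$ (the unique rank-one pair in $K_3$, with rank-one direction $e_2$). I would set up a one-scale branching construction between $A_2$ and $A_3$ modelled on the $p=2$ case of Theorem \ref{thm:p_dependence_two_wells}, with building blocks of vertical and horizontal sizes $h\sim 2^{-j}N^{-1}$, $\ell\sim\theta^{j}$, and a thin boundary layer interpolating to $Fx+b$ near $\partial\Omega$. Summing $h^{3}/\ell$ (elastic) and $\epsilon\ell$ (surface) per block and optimising $N\sim\epsilon^{-1/3}$ yields the $\epsilon^{2/3}$ upper bound. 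For (ii), $F = \diag(\nu,0)$ is a second-order laminate of $A_1$ and $M:=\diag(-1,0)$ (which are rank-one connected along $e_1$), while $M$ in turn is the equal laminate of $A_2$ and $A_3$ along $e_2$. I would use a two-scale hierarchical construction: an outer $x_1$-branching between $A_1$-regions and $M$-regions, and within each $M$-cell an inner $x_2$-branching between $A_2$ and $A_3$ using rescaled copies of the building blocks from (i) as inner pieces. Jointly optimising the outer and inner refinement frequencies against $\epsilon$ produces the $\epsilon^{1/2}$ upper bound.

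\textbf{Lower bounds and main obstacle.}
For the lower bounds I would pass to the Fourier side as in \cite{CO,CO1,KKO13,KW16,RT21}. After extending $u$ affinely outside $\Omega$ and using $\sum_j \theta_j A_j = F$ for the volume fractions $\theta_j$ determined by $F$, Plancherel shows that the elastic energy controls an integral of the schematic form $\int \bigl(4|\widehat{\chi}_1(\xi)|^{2}\xi_2^{2} + |\widehat{\chi}_3(\xi)-\widehat{\chi}_2(\xi)|^{2}\xi_1^{2}\bigr)|\xi|^{-2}\,d\xi$ (away from $\xi=0$), penalising $\widehat{\chi}_1$ transverse to $e_1$ and $\widehat{\chi}_3-\widehat{\chi}_2$ transverse to $e_2$; these multipliers exactly encode the rank-one geometry of the two lamination pairs. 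The surface energy provides a complementary high-frequency control on the $\widehat{\chi}_j$. In case (i) one has $\theta_1=0$, which forces $\chi_1 \equiv 0$ on $\Omega$, so the problem reduces to a two-well $(A_2,A_3)$ Fourier argument that closes in a single frequency split at $|\xi|\sim\epsilon^{-1/3}$. The hard part is case (ii): the only frequency-zero information supplied by $F=\diag(\nu,0)$ is the average of $\chi_1$, and a single Fourier split is insufficient to reach the $\epsilon^{1/2}$ scale. Following \cite{RT21}, I would argue by a two-step bootstrap: first use the low-frequency average of $\widehat{\chi}_1$ together with the elastic penalisation in the $\xi_2$-direction to produce a lower bound on the low-frequency content of the mesoscopic laminate indicator $\chi_2+\chi_3 = 1-\chi_1$ at a carefully chosen intermediate scale; then, at that intermediate scale, apply the Fourier estimate a second time to $\widehat{\chi}_3-\widehat{\chi}_2$ and balance against the surface energy to obtain the $\epsilon^{1/2}$ lower bound. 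I expect the main obstacle to be the precise choice of the cut-off scales in these two iterations and the careful handling of the full $2\times 2$ matrix-valued incompatibility constraint, which is more rigid than the scalar settings of \cite{RT21}.
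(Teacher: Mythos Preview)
Your upper-bound constructions are essentially the paper's: a single branching between $A_2,A_3$ for (i), and the two-scale branching (outer $A_1$/$J_1=-A_1$ in the $e_1$-direction, inner $A_2$/$A_3$ in the $e_2$-direction) for (ii).

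For the lower bounds there are two genuine gaps.

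In (i) the claim ``$\theta_1=0$, which forces $\chi_1\equiv 0$'' is false: the infimum in \eqref{eq:energyK3_min} is over \emph{all} admissible $\chi$, and nothing fixes the volume fractions a priori. A competitor with small elastic energy has $\langle\chi\rangle$ close to $F$ but not equal, so $\chi_1$ need not vanish and you cannot simply reduce to a two-well problem. The paper does not reduce to two wells; instead it uses the elastic multiplier to localize each $\tilde\chi_{j,j}$ to a truncated cone around the $k_j$-axis (Lemma~\ref{lem:first_loc}) and then --- crucially --- controls the remaining low frequencies \emph{on the axes} via the Dirichlet boundary condition (Lemma~\ref{lem:axes_estimates}, which would fail in the periodic setting). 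Choosing the aperture $\mu\sim\epsilon^{1/3}$ makes the truncated cones one-dimensional and closes the argument.

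In (ii) the key idea is missing. Your bootstrap relies only on the linear identity $\chi_2+\chi_3=1-\chi_1$, which carries no new Fourier information (the $1$ sits at frequency $0$, and you are just rewriting $\widehat{\chi}_1$). What the paper exploits is the pointwise \emph{nonlinear} relation
\[
\chi_{1,1}=1-2\chi_{2,2}^{2},\qquad\text{i.e.}\qquad \tilde\chi_{1,1}=g(\tilde\chi_{2,2})\ \text{for a quadratic }g.
\]
Since $\tilde\chi_{2,2}$ is Fourier-concentrated in a cone of aperture $\mu$ around the $k_2$-axis and radius $\mu_2$, the Fourier transform of its \emph{square} (a convolution) is supported where $|k_1|\lesssim\mu\mu_2$. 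Hence $\tilde\chi_{1,1}=g(\tilde\chi_{2,2})$ is forced into a strictly smaller truncated cone $C_{1,\mu,\mu_3}$ with $\mu_3\sim\mu\mu_2$ (this is the commutator argument of Proposition~\ref{prop:cone-red-nonl}). Choosing $\mu\sim\epsilon^{1/4}$ makes this cone one-dimensional, and the Dirichlet low-frequency estimate of Lemma~\ref{lem:axes_estimates} then gives $\|\tilde\chi_{1,1}\|_{L^2}^2\lesssim\epsilon^{-1/2}E_{\epsilon,3}$, hence the $\epsilon^{1/2}$ lower bound. Note also that the information flows from $\chi_{2,2}$ to $\chi_{1,1}$, opposite to what you propose; there is no mechanism in your outline that shrinks a frequency region by exploiting a product/convolution structure, and without it a second split cannot improve on $\epsilon^{2/3}$.
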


This result provides an $L^2$ version of the scaling behaviour captured by Lorent in \cite{L01} where an $L^1$ based elastic energy was considered. The fact that the scaling from Theorem \ref{thm:K3} and the one in \cite{L01} differ (in \cite{L01}, in our notation, a scaling of the order $\epsilon^\frac{1}{3}$ for boundary conditions in the second order lamination convex hull is obtained) is a consequence of the different choice of the $p$-growth condition of the elastic energy. 
Indeed, the result from \cite{L01} corresponds to the choice of $p = 1$ while our result holds for $p=2$. As in Section \ref{sec:Lpintro} this has consequences on the microstructure: While for the case $p=1$ both branched and twinned microstructures lead to the same scaling (see the Appendix \ref{sec:second-order} for general upper bound constructions for second-order laminates with arbitrary $p\in [1,\infty)$), for $p=2$ branched microstructures are favoured for small $\epsilon$. This difference is also strongly reflected in our proofs: While \cite{L01} uses a careful counting argument keeping track of the double laminates, we rely on a Fourier space decomposition in the spirit of Hashin-Strikman estimates \cite{HS63} (see also \cite{CO,CO1,KKO13,KW14,KW16}).

By exploring the model setting from Theorem \ref{thm:K3}, we illustrate that the energy scaling of the simplified three-well problem from \eqref{eq:energyK3} without gauges is determined by the order of lamination of the Dirichlet data. This distinguishes the Dirichlet problem from the setting of periodic boundary data and also the nucleation setting (see Section \ref{subsec:periodic3grad} for a detailed discussion on the periodic setting for the three-well case). As is well-known for simple laminates \cite{KM1,KM2,CO,CO1,CC15,C} boundary data in the first-order lamination convex hull without compensation effects lead to the typical $\epsilon^\frac{2}{3}$ scaling (see the references in Section \ref{sec:lit} below for other, related settings and models from the calculus of variations). Similarly as in \cite{KW16} in which a more complicated model for compliance minimization was considered, in the case of data which are in the second lamination convex hull, this behaviour changes and (essentially) an $\epsilon^\frac{1}{2}$ scaling is obtained.
We remark that in the formulation of the theorem, for technical reason (since we are working with periodic extensions), we have restricted our attention to the square as the underlying domain. It is expected that for other domains which are not extremely elongated in one direction one would obtain the same scaling behaviour but would need to overcome technical challenges in working with zero extensions into $\R^n$ instead of periodic extensions.

\begin{figure}[t]
\begin{center}
\includegraphics{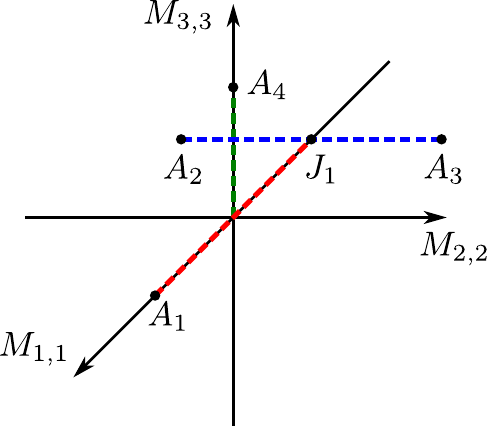}
\end{center}
\caption{The set of matrices $K_4$ in the space of diagonal matrices (see Section \ref{sec:fourthintro}).
In blue $K^{(1)}_4$, in red $K^{(2)}_4\setminus K^{(1)}_4$, in green $K^{(lc)}_4\setminus K_4^{(2)} = K^{(3)}_4\setminus K^{(2)}_4$.}
\label{fig:K4}
\end{figure}

\subsection{Four wells in three dimensions and third-order laminates}
\label{sec:fourthintro}

Building on the observations in the previous section and seeking to study the scaling behaviour of higher-order laminates, we next consider a four-well problem in three dimensions in which laminates up to order three are present. To this end, we consider the set
\begin{align}
\label{eq:K4}
\begin{split}
&K_4:= \{A_1,\dots,A_4\} \subset \R^{3\times 3},\\
& \mbox{with }
A_1=\begin{pmatrix}1&0&0\\0&0&0\\0&0&0\end{pmatrix},\,
A_2=\begin{pmatrix}-1&0&0\\0&-1&0\\0&0&0\end{pmatrix},\,
A_3=\begin{pmatrix}-1&0&0\\0&1&0\\0&0&0\end{pmatrix},\,
A_4=\begin{pmatrix}0&0&0\\0&0&0\\0&0&1\end{pmatrix}.
\end{split}
\end{align}
Again, $A_2$ and $A_3$ are rank-one connected and the lamination convex hulls are given by 
\begin{align*}
&K^{(lc)}_4 = K_4\cup (K^{(1)}_4\setminus K_4) \cup (K^{(2)}_4\setminus K^{(1)}_4)\cup (K^{(3)}_4\setminus K^{(2)}_4); \\
&K^{(1)}_4\setminus K_4:= \left\{
\begin{pmatrix}
-1 & 0 & 0 \\ 0 & \mu & 0\\ 0 & 0& 0
\end{pmatrix}: \ |\mu|<1
\right\},\\
&K^{(2)}_4\setminus K^{(1)}_4:=\left\{
\begin{pmatrix}
\nu & 0 & 0 \\ 0 & 0 & 0\\ 0 & 0& 0
\end{pmatrix}: \ |\nu|<1
\right\},\\
&K^{(3)}_4\setminus K^{(2)}_4:=\left\{
\begin{pmatrix}
0 & 0 & 0 \\ 0 & 0 & 0\\ 0 & 0& \gamma
\end{pmatrix}: \ 0<\gamma<1
\right\},
\end{align*}
with $K_4^{(lc)}=K_4^{(3)}$, see Figure \ref{fig:K4}.
We remark that, while it would be possible to also consider sets $K \subset \R^{2\times 2}$ with three orders of lamination, the set $K_4$ from above has a particularly simple structure, in which $\inte K^{(lc)}_4 = \emptyset$ and $K^{(lc)}_4$ only consists of line segments. This allows for a direct transfer of the ideas from Section \ref{sec:three_wellsinto} to this higher-order lamination setting. In the next section, we generalize this even further, showing that \emph{any} scaling order of the type $\epsilon^{\frac{2}{m+2}}$, $m\in \N$, can be obtained for matrices of a similar structure as the set $K_4$.

As in the previous section, we consider a singularly perturbed variational problem and define
\begin{align}
\label{eq:energyK4}
E_{\epsilon,4}(u,\chi):= E_{el}(u,\chi) + \epsilon E_{surf}(\chi):= \int\limits_{\Omega}|\nabla u - \chi|^2 dx + \epsilon \sum\limits_{j=1}^{4} \|D\chi_j \|_{TV},
\end{align}
where the function $\chi$ on the left hand side of \eqref{eq:energyK4} is defined as
\begin{align*}
\chi := \diag(\chi_1-\chi_2-\chi_3, -\chi_2 + \chi_3,\chi_4) = \sum\limits_{j=1}^{4} \chi_j A_j: (0,1)^3 \rightarrow \R^{3\times 3},
\end{align*}
and, for $j\in \{1,2,3,4\}$, $\chi_j \in BV((0,1)^3;\{0,1\})$ such that
\begin{align*}
\sum\limits_{j=1}^{4}\chi_j = 1.
\end{align*}

With this notation fixed, we prove that again the order of lamination of the Dirichlet data determines the energy scaling behaviour for this four-well problem:

\begin{thm}[Scaling of the four-well problem]
\label{thm:K4}
Let $\Omega  = [0,1]^3$, let $E_{\epsilon,4}(\cdot,\cdot)$ be as in \eqref{eq:energyK4} and assume that $\chi$ is as above. For $F\in K^{(lc)}_4\setminus K_4$ let 
\begin{align*}
\mathcal{A}_F:=\{u\in W^{1,2}_{loc}(\R^3;\R^3): \ u(x) = Fx+b \mbox{ in } \R^3 \setminus \Omega \mbox{ for some $b\in \R^3$}\}.
\end{align*}
Set
\begin{align}
\label{eq:energyK4_min}
E_{\epsilon,4}(F):= \inf\limits_{\chi}\inf\limits_{u \in \mathcal{A}_F} E_{\epsilon,4}(u,\chi).
\end{align}
\begin{itemize}
\item[(i)] If $F\in K^{(1)}_4\setminus K_4$, then there exist two constants $0<c\le C$ such that for every $\epsilon\in(0,1)$
$$
c\epsilon^\frac{2}{3}\le E_{\epsilon,4}(F)\le C\epsilon^\frac{2}{3}.
$$
\item[(ii)] If for $m=2,3$ it holds that $F\in K^{(m)}_4\setminus K^{(m-1)}_4$, then there exist constants $C\geq c>0$, $\epsilon_0>0$ such that for every $\epsilon\in(0, \epsilon_0)$
\begin{align*}
c \epsilon^{\frac{2}{m+2}} \leq E_{\epsilon,4}(F) \leq C \epsilon^{\frac{2}{m+2}}.
\end{align*}
\end{itemize}
All constants $c,C$ depend on $F$ and $K_4$.
\end{thm}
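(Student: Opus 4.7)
The plan is to prove matching upper and lower bounds for each of the three cases, paralleling the structure of the proof of Theorem \ref{thm:K3} but with one additional level of hierarchy coming from the extra dimension and extra well.

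For the \emph{upper bounds}, I would construct iterated branching microstructures, adding one generation of self-similar branching per order of lamination. In case (i), the matrix $F$ sits on a first-order lamination segment whose endpoints are rank-one connected (along $e_2$) between $A_2$ and $A_3$, and the third and fourth phases can be set to $\chi_1\equiv\chi_4\equiv 0$; thus the classical Kohn--Müller branching construction recalled in Section~\ref{sec:Lpintro} with $p=2$ directly realises $F$ and produces the $\epsilon^{2/3}$ bound. In case (ii), $F$ is a convex combination of $A_1$ and a matrix $B\in K_4^{(1)}$, with rank-one connection along $e_1$. I would perform an outer branching construction between $A_1$ and $B$ in the $e_1$-direction, with cell size $\ell_{\mathrm{out}}$, and then fill the cells assigned to $B$ with a finer branching construction between $A_2$ and $A_3$ (as in case (i)) at scale $\ell_{\mathrm{in}}\ll\ell_{\mathrm{out}}$. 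The two-parameter optimisation of the corresponding back-of-the-envelope energy balance yields $\epsilon^{1/2}$. Case (iii) adds a third, outermost branching layer along $e_3$ between $A_4$ and a matrix in $K_4^{(2)}$, and the three-level optimisation gives $\epsilon^{2/5}$. Technically, the construction is assembled from the vertical and horizontal branching building blocks used in Theorem \ref{thm:K3}, applied iteratively and glued via boundary layers whose cost has already been accounted for in the heuristics.

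For the \emph{lower bounds}, I would follow the Fourier multiplier approach of \cite{CO,CO1,KKO13,KW16} combined with the bootstrap strategy from \cite{RT21} used in the proof of Theorem~\ref{thm:K3}. After passing to a periodic extension of $(u,\chi)$ on a cube larger than $\Omega$ (which explains the restriction to $\Omega=[0,1]^3$), Plancherel applied to $E_{el}$ yields a pointwise-in-$k$ quadratic lower bound in terms of $\hat\chi_j(k)$ whose multipliers degenerate exactly on the Fourier cones corresponding to the available rank-one connections: the direction $e_2$ for $A_2\leftrightarrow A_3$, the direction $e_1$ for the laminate of first order against $A_1$, and the direction $e_3$ for the laminate of second order against $A_4$. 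The surface energy contributes a bound of the form $\sum_j \int|k||\hat\chi_j(k)|^2\,dk\lesssim\epsilon^{-1}E_{surf}$. Case (i) then follows by a single bootstrap between these two bounds, case (ii) by a two-step bootstrap in which one first uses the multiplier penalising the $A_2\leftrightarrow A_3$ connection to control $\chi_2-\chi_3$ on the high-frequency part, and then uses this information to promote the bound on $\chi_1$ via the multiplier associated with the $A_1\leftrightarrow (A_2,A_3)$ lamination; finally, case (iii) requires a third bootstrap step isolating $\chi_4$ through the $e_3$-multiplier.

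The main obstacle I expect is the three-step bootstrap argument for case (iii): one must carefully identify the three families of Fourier cones where the corresponding multipliers degenerate and show that outside each cone the relevant multiplier dominates quantitatively, so that the surface energy can be used to push frequency mass into the next, higher-order cone. Tracking the precise exponents across the iteration, and verifying that the boundary layer arising from the compensation of the Dirichlet data only produces a perturbative error, is the delicate part. Here, the very special geometry of $K_4$ in which only the $e_2$-direction produces a rank-one connection inside $K_4^{(1)}$, only $e_1$ opens the second order, and only $e_3$ opens the third order, essentially decouples the three bad cones and should make the bootstrap tractable; this also explains why the scaling reads $\epsilon^{2/(m+2)}$ with a clean dependence on $m$ and prepares the ground for the $n+1$-well generalisation announced in the introduction.
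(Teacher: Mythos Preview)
Your overall strategy matches the paper's, but two key mechanisms are missing from your sketch.

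\textbf{Upper bounds.} In three dimensions, iterating the two-dimensional Kohn--M\"uller blocks from Section~\ref{sec:Lpintro} as you describe does not yield the claimed exponents. If the outermost layer oscillates in one direction and branches towards only one pair of opposite faces, then on the remaining pair of faces the affine datum must be attained by a cut-off of width comparable to the coarsest scale $r_1$; this layer carries elastic energy of order $r_1$ (not $r_1^2$), and the resulting optimisation gives for instance only $\epsilon^{1/2}$ in case~(i) and $\epsilon^{1/4}$ in case~$m=3$. The paper resolves this by making the outermost construction branch simultaneously towards \emph{all} faces orthogonal to the lamination direction, via the radial-type variable $\rho(x_1,x_2)=\max\{|x_1-\tfrac12|,|x_2-\tfrac12|\}+\tfrac12$ (Proposition~\ref{prop:3D-3order}, Step~1); this eliminates the linear cut-off contribution and restores the $r_1^2$ term. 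The Remark closing Section~\ref{sec:low_n} makes exactly this point.

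\textbf{Lower bounds.} The elastic multiplier alone only localises each $\tilde\chi_{j,j}$ to its own cone $C_j$; nothing in your sketch explains how one component's localisation ``promotes'' another's. In the paper the bootstrap is driven by \emph{nonlinear polynomial identities} between the diagonal entries, e.g.\ $\tilde\chi_{1,1}=f(\tilde\chi_{2,2}+2\tilde\chi_4)$ and $\tilde\chi_4=1-\tilde\chi_{1,1}^2$: since multiplication becomes convolution, a polynomial in functions Fourier-supported in $\bigcup_{j\ne\ell}C_{j,\mu,\mu_2}$ has support in $\{|k_\ell|\lesssim\mu\mu_2\}$, so intersecting with $C_{\ell,\mu,\mu_2}$ shrinks the radius by a factor $\mu$ (Lemma~\ref{lem:comm}, Proposition~\ref{prop:cone-red-nonl}). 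After $m-1$ iterations with $\mu\sim\epsilon^{1/(m+2)}$ and $\mu_2\sim\epsilon^{-m/(m+2)}$ the support of $\tilde\chi_{m,m}$ collapses to a bounded segment of the $m$-th axis. At that point the Dirichlet condition is not a perturbation but the decisive ingredient: Lemma~\ref{lem:axes_estimates} uses it to bound the remaining low frequencies on the axis, and the paper shows explicitly (Section~\ref{subsec:periodic3grad}) that in the purely periodic problem the scaling is genuinely different.
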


As in the previous section, we here consider a set in matrix space, $K_4 \subset \R^{3\times 3}$, whose lamination convex hull is iteratively built up from a single laminate by adding further lines in each lamination step. The overall lamination convex hull thus consists of a union of lines. In this situation the above result illustrates that the energy scaling behaviour of the associated energy is determined by the order of the lamination of the Dirichlet boundary datum (up to the technical, arbitrarily small loss in the lower bound). It proves that each additional order of lamination requires a fixed quantitative additional amount of energy.

\subsection{Arbitrary order of lamination and $(n+1)$ wells in $n$ dimensions}
\label{sec:n_wells_intro}
Last but not least, we extend the results from the previous subsections to the setting of a set $K_{n+1}$ consisting of $n+1$ matrices in $n$ dimensions. The structure of the $n+1$ matrices are chosen such that only two matrices in $K_{n+1}$ are rank-one connected and such that the lamination convex hull consists of one-dimensional line segments. In this case, laminates up to order $n$ are possible and, as indicated in the previous sections, we again prove that the scaling of the energy minima for prescribed Dirichlet conditions in $K^{(lc)}$ only depends on the order of lamination of the Dirichlet data.

More precisely, as a generalization of the setting from the previous two sections, given parameters $\nu_j\in(0,1)$ for $j=5,\dots,n+1$, we define $A_1,\dots, A_{n+1} \in \R^{n\times n}$ inductively as follows:
\begin{equation*}\label{eq:n_matrices}
\begin{split}
& A_1 := \begin{pmatrix} 1 & 0 & 0  & 0 &\hdots & 0\\
0 & 0 & 0 & 0& \hdots & 0 \\
0 & 0 & 0& 0 & \hdots & 0 \\
\vdots & \vdots & \vdots & \vdots  & \vdots & \vdots\\
0 & 0 & 0 & 0 &0&0 \end{pmatrix},\ 
A_2 := \begin{pmatrix} -1 & 0 & 0  & 0 & \hdots & 0\\
0 & -1 & 0 & 0& \hdots & 0 \\
0 & 0 & 0& 0 & \hdots & 0 \\
\vdots & \vdots & \vdots & \vdots  & \vdots & \vdots\\
0 & 0 & 0 & 0 &0&0 \end{pmatrix},\\ 
& A_3 := \begin{pmatrix} -1 & 0 & 0  & 0 &\hdots & 0\\
0 & 1 & 0 & 0& \hdots & 0 \\
0 & 0 & 0& 0 & \hdots & 0 \\
\vdots & \vdots & \vdots & \vdots  & \vdots & \vdots\\
0 & 0 & 0 & 0 &0&0 \end{pmatrix},\
 A_4  := \begin{pmatrix} 0 & 0 & 0  & 0 &\hdots & 0\\
0 & 0 & 0 & 0& \hdots & 0 \\
0 & 0 & 1 & 0 & \hdots & 0 \\
\vdots & \vdots & \vdots & \vdots  & \vdots & \vdots\\
0 & 0 & 0 & 0 &0&0 \end{pmatrix},\ 
\end{split}
\end{equation*}
and $A_{j+1}=\diag(0,0,\nu_5,\dots,\nu_j,\nu_{j+1},1,0,\dots,0)\in \diag(n,\R)$ for $j\in \{4,\dots,n\}$. 
With this we define
\begin{align}
\label{eq:energyKn}
K_{n+1} := \{A_1, \dots, A_{n+1}\}.
\end{align}

We observe that in this setting, we have that $K^{(lc)}_{n+1}  = K^{(n)}_{n+1}$ and that the laminates of order $3\le j\le n$ consist of the following line segments
\begin{align*}
K^{(j)}_{n+1} \setminus K^{(j-1)}_{n+1} = \left\{ \diag(0,\dots,0,\nu_5,\dots,\nu_{j+1},\nu,0,\dots,0) \in  \R^{n\times n}:  \ \nu \in (0,1) \right\}.
\end{align*}
In particular, there exist genuine laminates of any order up to (and including) the order $n$.

As above, for $\Omega \subset \R^n$ open, bounded we again consider associated elastic and surface energies:
\begin{align}
\label{eq:En}
E_{\epsilon,n+1}(u,\chi):= E_{el}(u,\chi) + \epsilon E_{surf}(\chi),
\end{align}
where 
\begin{align*}
 E_{el}(u,\chi) = \int\limits_{\Omega}|\nabla u - \chi|^2 dx, \quad E_{surf}(\chi):=\sum\limits_{j=1}^{n+1}\|D\chi_j\|_{TV},
\end{align*}
and where the functions $\chi_{j}\in BV((0,1)^n;\{0,1\})$ for $j\in \{1,\dots,n+1\}$ are such that $\sum\limits_{j=1}^{n+1} \chi_j = 1$. We further define
\begin{align*}
\chi = \sum\limits_{j=1}^{n+1} \chi_j A_j:(0,1)^n\to\R^{n\times n}.
\end{align*}
With this notation, as in the previous set-up, we prove the scaling of these singularly perturbed energies in the limit $\epsilon \rightarrow 0$ and illustrate the resulting hierarchy of scales which are determined by the order of lamination of the Dirichlet data.

\begin{thm}[Scaling of the $(n+1)$-well problem]
\label{thm:Kn}
Let $\Omega  = [0,1]^n$, let $E_{\epsilon,n+1}(\cdot,\cdot)$ be as in \eqref{eq:En} and assume that $\chi$ is as above. For $F\in K^{(lc)}_{n+1}\setminus K_{n+1}$ let 
\begin{align*}
\mathcal{A}_F:=\{u\in W^{1,2}_{loc}(\R^n;\R^{n}): \ u(x) = Fx+b \mbox{ in } \R^n \setminus \Omega \mbox{ for some $b\in \R^n$}\}.
\end{align*}
Set
\begin{align}
\label{eq:energyKn_min}
E_{\epsilon,n+1}(F):= \inf\limits_{\chi}\inf\limits_{u \in \mathcal{A}_F} E_{\epsilon,n+1}(u,\chi).
\end{align}
\begin{itemize}
\item[(i)] If $F\in K^{(1)}_{n+1}\setminus K_{n+1}$, then there exist constants $0<c\le C$ such that for every $\epsilon\in(0,1)$ there holds
$$
c\epsilon^\frac{2}{3}\le E_{\epsilon,n+1}(F)\le C\epsilon^\frac{2}{3}.
$$
\item[(ii)]
If for $m\in \{2,\dots,n\}$, it holds that $F\in K^{(m)}_{n+1}\setminus K^{(m-1)}_{n+1}$, then there exist constants $C\geq c>0$, $\epsilon_0>0$ such that for every $\epsilon\in(0, \epsilon_0)$
\begin{align*}
c \epsilon^{\frac{2}{m+2}} \leq E_{\epsilon,n+1}(F) \leq C \epsilon^{\frac{2}{m+2}}.
\end{align*}
\end{itemize}
All constants $c,C$ depend on $F$ and $K_{n+1}$.
\end{thm}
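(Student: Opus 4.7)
The plan is to extend the combined construction-and-Fourier-multiplier approach used in the proofs of Theorems \ref{thm:K3} and \ref{thm:K4}, with an induction on the order of lamination $m\in \{1,\dots,n\}$.

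\textbf{Upper bound.} For the upper bound I would proceed by iterated branching. In the case $m = 1$ the problem localises to the two-dimensional plane spanned by $e_1, e_2$, and the $p=2$ branching construction behind Theorem \ref{thm:p_dependence_two_wells} (extended trivially in the remaining coordinates) yields the $\epsilon^{2/3}$ scaling. For the inductive step with $m \geq 2$ I would write $F\in K^{(m)}_{n+1}\setminus K^{(m-1)}_{n+1}$ as a rank-one convex combination $F = \lambda A + (1-\lambda) G$ with $A\in K_{n+1}$ (namely the matrix introducing the new non-zero diagonal entry at level $m$) and $G \in K^{(m-1)}_{n+1}\setminus K^{(m-2)}_{n+1}$; since all matrices are diagonal and differ in a single coordinate, the rank-one direction is a suitable coordinate vector $e_i \otimes e_i$. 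I would then build an outer branching construction in this direction interpolating between $A$ and $G$, with building blocks in the spirit of Lemma \ref{lem:loc-vert}. On each fine cell the boundary datum is (a mollified version of) the affine map with gradient $G$, and the inductive hypothesis applied after rescaling to the unit cube yields an inner energy of a definite size. Summation over the cells and the two-parameter optimisation as in Lemma \ref{prop:vert-bra} produces a recursion that is solved by $\epsilon^{2/(m+2)}$, as in the passage from $m=1$ to $m=2$ in Theorem \ref{thm:K3} and from $m=2$ to $m=3$ in Theorem \ref{thm:K4}.

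\textbf{Lower bound.} For the lower bound I would follow the Fourier bootstrap strategy of \cite{RT21}, extended to $m$ stages exactly as in the proofs of Theorems \ref{thm:K3} and \ref{thm:K4}. After a periodic extension onto the torus $\T^n$ (made admissible by subtracting the affine field $Fx$ and inserting a boundary layer whose contribution is strictly subdominant), Plancherel turns the elastic energy into a weighted $L^2$ norm $\sum_k M_j(k) |\widehat{\chi}_j(k)|^2$, where the multiplier $M_j(k)$ quantifies the rank-one incompatibility of $A_j$ and vanishes exactly on the coordinate hyperplane associated with the corresponding coordinate direction $e_j$. The level-$1$ step recovers the standard KM-type estimate by pairing $M_j(k)$ in the $k_i$-direction with the surface penalty $\epsilon \|D\chi_j\|_{TV}$ at a cutoff $\lambda_1$ in Fourier space. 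At each successive level the previous stages effectively concentrate the mass of $\chi_{j+1}$ at its prescribed boundary value; the multiplier $M_{j+1}(k)$ associated with the next coordinate direction can then be played against the surface energy at a finer cutoff $\lambda_j$, producing one additional $\epsilon$-factor in the optimisation. Combining $m$ such bounds and optimising simultaneously in $\lambda_1 \ll \dots \ll \lambda_m$ reproduces exactly the lower bound $\epsilon^{2/(m+2)}$.

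\textbf{Main obstacle.} The hardest part will be making the iterated bootstrap quantitative and uniform across the $n$ potential levels. At each stage $j$ the preceding stages only pin the phase $\chi_{j+1}$ in $L^2$ up to an error depending on $\lambda_{j-1}$; this error must be controlled before applying the multiplier inequality at level $j$, and the cascade of cutoffs $\lambda_1 \ll \dots \ll \lambda_m$ has to be chosen so that every inequality in the chain is saturated at the same exponent of $\epsilon$. Managing the dependence of constants and the compounding $L^2$-error terms in a way that does not degrade the final exponent is the central technical difficulty, as it was already in Theorems \ref{thm:K3} and \ref{thm:K4}; a secondary point is the careful handling of boundary layers needed to reduce the affine Dirichlet datum to a periodic one on $\T^n$ while preserving the sharp scaling.
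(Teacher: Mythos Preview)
Your strategy is broadly aligned with the paper's, but both halves of the proposal miss an ingredient that is essential for obtaining the exponent $\frac{2}{m+2}$ rather than something strictly weaker.

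\textbf{Upper bound.} The paper shows explicitly (see the Remark after the proof of the upper bound in Section~\ref{sec:low_n}) that the naive iteration you describe---an outer one-directional branching in the spirit of Lemmas~\ref{lem:loc-vert} and~\ref{prop:vert-bra}, with the lower-order construction plugged into the cells---only yields $E \lesssim r + \epsilon/r^n$ after optimisation, hence $\epsilon^{1/(n+1)}$, which is too weak. The loss comes from the cut-off layers on the $n-2$ faces of $[0,1]^n$ orthogonal to neither the oscillation nor the branching direction: these contribute $r$ rather than $r^2$. The paper's fix is that the \emph{outermost} branching is not one-directional but branches simultaneously toward all $n-1$ pairs of faces orthogonal to the oscillation direction, via the auxiliary function $\rho(x_1,\dots,x_{n-1})=\max_i|x_i-\tfrac12|+\tfrac12$ (this already appears in the proof of Proposition~\ref{prop:3D-3order} for $n=3$). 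All inner branchings remain one-directional (Lemma~\ref{lem:nD-korder}). Your proposal does not distinguish the outermost layer; without that, the induction closes at the wrong exponent.

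\textbf{Lower bound.} Two points. First, the bootstrap mechanism in the paper is not ``the previous stages concentrate $\chi_{j+1}$ at its boundary value''. Rather, the specific staircase structure of $K_{n+1}$ produces \emph{nonlinear polynomial} identities between diagonal entries, e.g.\ $\tilde\chi_{1,1}=f(\tilde\chi_{2,2}+2\tilde\chi_{3,3})$ and $\tilde\chi_{j+1,j+1}=g_j(\tilde\chi_{j,j})$. These identities, via the commutator estimate of Proposition~\ref{prop:cone-red-nonl} and Corollary~\ref{cor:cone-red-nonl}, shrink the truncated Fourier cone $C_{m,\mu,\mu_2}$ in which $\tilde\chi_{m,m}$ is concentrated by a factor $\mu\sim\epsilon^\alpha$ at each stage; after $m-1$ iterations the cone has width $\epsilon^{(m+2)\alpha-1}$, and the choice $\alpha=\tfrac{1}{m+2}$ collapses it to a coordinate axis. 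Second, there is no boundary layer inserted to pass to a periodic problem: the inclusion $W^{1,2}_0(\Omega)\subset W^{1,2}(\T^n)$ is used directly (Lemma~\ref{lem:Fourier}), and the Dirichlet datum enters only at the very end through Lemma~\ref{lem:axes_estimates}, which bounds the remaining low-frequency mass on the axis by $\mu^{-2}E_{el}$. Your description of the cascade of cutoffs and the error management is correct in spirit, but without identifying the polynomial relations as the driver of the cone reduction, the argument does not get off the ground.
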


We emphasize that in the specific setting in which the lamination convex hull consists only of line segments Theorem \ref{thm:Kn} yields the dependence of the minimal energy scaling purely in terms of the boundary data.

\subsection{Structure of the sets $K_m$}
\label{sec:structureKm}
Let us stress that the sets $K_m$ for which we study differential inclusions in this article are constructed such that only a single rank-one connection is present in the set $K_m$ and that the lamination convex hull is obtained iteratively in a hierarchical procedure adding one new one-dimensional line segment in each lamination step. In particular, the overall lamination convex hull consists of a finite union of one-dimensional line segments. With slight abuse of notation compared to \cite{CFM05}, we refer to these boundary conditions and wells $K_m$ as a ``staircase laminates''. It is this structure which allows us to apply the commutator arguments from Section \ref{sec:prelim_lower} in deducing lower bounds for the energy scaling laws. We emphasize that with this observation it is possible to construct many further examples of sets $K$ having a similar property and that thus our sets $K_m$ from above should be regarded as prototypical model cases which can be generalized substantially. We however caution that while this analysis provides a systematic treatment of scaling laws for such ``staircase laminates'' and while this procedure can also be applied to certain sets $K \subset \R^{n\times n}$ whose lamination convex hulls are not one-dimensional, it is expected that with more complicated sets $K$, new, interesting behaviour can be observed.

\subsection{Relation to the literature}
\label{sec:lit}
Let us embed the outlined results into the literature: While not including a gauge invariance, our model is strongly motivated by the investigation of singularly perturbed problems in the phenomenological theory of martensite and, more generally, by vector-valued phase transition problems \cite{B04,D,BJ92,M1,P}. These have in common that they lead to highly non-convex variational problems and are associated with vector-valued convexity notions. The analysis of these models and their (potentially very complex) minimizers by means of singularly perturbed models has a long tradition (see \cite{KM1,KM2} and \cite{M1} and the references therein) and has been studied for a rich set of physical applications. Closest to our setting are the articles on the phenomenological theory of martensite \cite{CO,CO1,CC15,KK,KKO13,KO19,KM1,KM2,R16,C,CZ16,CDZ17,CDMZ20,BG15,BK14,SPDBJ20,Lorent06,L01,TS,TS21}, compliance minimization problems \cite{KW14,KW16}, various scaling results on micromagnetism \cite{CKO99,OV10, KN18}, the more abstract matrix-space analysis in \cite{KMS03,RTZ19} and the discrete model problems from \cite{CK99,C91,C95,CM99,C99}. In particular, as in the latter class of articles, we seek to investigate the role of the order of lamination on the energy scaling and have simplified the models from applications by, for instance, not taking into account gauges. It is our objective and one of the main novelties of this article to contribute to ``closing the gap'' between the celebrated $\epsilon^{\frac{2}{3}}$ branching scaling from \cite{KM1,KM2} and the extremely rigid scaling of the $T_4$ structures from \cite{RT21} and to thus also capture the scaling behaviour and properties of higher order laminar structures.

\subsection{Outline of the article}
The remainder of the article is structured as follows: 
\begin{itemize}
\item In Section \ref{sec:not_pre} we recall our notation, central definitions and some auxiliary results.
\item Building on \cite{CC15}, in Section \ref{sec:Lp} we first study the $p$-dependence of the energy scaling for simple laminate boundary conditions by proving Theorem \ref{thm:p_dependence_two_wells}. 
\item In Section \ref{sec:prelim_lower} we present a number of technical auxiliary results which will play a crucial role in the proof of the lower bounds of Theorems \ref{thm:K3}-\ref{thm:Kn}. In contrast to the results from Section \ref{sec:Lp} these strongly rely on $L^2$-based Fourier techniques.
\item Building on the discussion from Section \ref{sec:prelim_lower}, the lower and upper bounds of Theorems \ref{thm:K3}-\ref{thm:Kn} are then presented in Sections \ref{sec:3well_lower}-\ref{sec:low_n}.
\item Last but not least, in Appendix \ref{sec:second-order} we provide higher-order two-dimensional branching constructions for the general energies in \eqref{eq:elast_gen} and deduce corresponding upper scaling bounds.
\end{itemize}

\subsection*{Acknowledgements}
Both authors would like to thank Prof.~M.~Chipot and Prof.~S.~Müller for sharing their thoughts on the $p$-dependence from Section \ref{sec:Lp} with them and sending us their unpublished, draft manuscript \cite{CM97}, containing upper bound constructions for a general, finite element $m$-well problem with wells consisting of vectors $v_j \in \R^n$ (which correspond to the upper bound scaling results presented in Section \ref{sec:Lpintro}).

This work was funded by the Deutsche Forschungsgemeinschaft (DFG, German Research Foundation) through SPP 2256, project ID 441068247.
A.R.~is a member of the Heidelberg STRUCTURES Excellence Cluster, which is funded by the Deutsche Forschungsgemeinschaft (DFG, German Research Foundation) under Germany’s Excellence Strategy EXC2181/1-390900948.

\section{Notation and Auxiliary Results}

\label{sec:not_pre}

In this section we collect some of the notational conventions and background results which we will use in the following sections.

Throughout the article, when writing $a\sim b$ we mean that $c^{-1} a\le b\le c a$ where $c>0$ is a fixed constant independent of $\epsilon$ (but which may depend on other quantities such as the dimension $n$, the choice of $p \in [1,\infty)$ etc).
Analogously, $a\lesssim b$ and $a\gtrsim b$ stand for $a\le cb$ and $a\ge cb$, respectively.

Given $n\in\N$, we use the notation $\diag(n,\R)$ to denote the $n\times n$ diagonal matrices with real entries.
With $e_j\in\R^n$, $j\in\{1,\dots,n\}$, we denote the $j$-th vector of the canonical basis of $\R^n$.

Given $d,m\in\N$, $\Omega \subset \R^n$ open, bounded and a summable function $f:\Omega\subset\R^n\to\R^{m\times d}$ we denote the \emph{average of $f$ on $\Omega$} by
$$
\langle f\rangle:=\frac{1}{|\Omega|}\int_\Omega f(x)dx.
$$
In the case that this notation is ambiguous, we will write $\langle f\rangle_\Omega$.
Let $\Omega=\prod_{j'=1}^n[a_{j'},b_{j'}]$, given $j\in\{1,\dots,n\}$, let $\Omega':=\prod_{j'\neq j}^n[a_{j'},b_{j'}]$ we will denote by $\langle f\rangle_j:\Omega'\subset\R^{n-1}\to\R^{m\times d}$
the function defined as follows
$$
\langle f\rangle_j:= \frac{1}{b_j-a_j}\int_{a_j}^{b_j} f(x)dx_j
$$
which is well-defined almost everywhere (and summable) in $\Omega'$ thanks to Fubini's Theorem.

Given a set $M\subset \R^n$ and a positive integer $h$, we use the notation
$S_{h} M:= \{\sum\limits_{j=1}^{h} m_j:\ m_j \in M\}$ to denote the $h$-fold Minkowski sum of $M$ with itself.

We collect our notation on the energies defined in the introduction and in the remainder of the article.
Given $p\in[1,\infty)$ and $\Omega \subset \R^n$ open, bounded, we consider
\begin{itemize}
\item the following $L^p$-based elastic and surface energy 

\begin{align*}
E^{(p)}_{\epsilon}(u) = \int\limits_{\Omega}\dist^p(\nabla u, K) dx + \epsilon \|D^2 u\|_{TV}
\end{align*} 
in Section \ref{sec:Lp}, together with their localizations $E_{\epsilon}^{(p)}(u,R)$ for $R\subset \Omega \subset \R^{2}$,
\item the $L^2$-based, phase indicator type energies  

\begin{align*}
E_{\epsilon,n+1}(u,\chi) = E_{el}(u,\chi) + \epsilon E_{surf}(\chi) := \int\limits_{\Omega}|\nabla u - \chi|^2 dx + \epsilon \sum\limits_{j=1}^n \|D \chi_j\|_{TV}
\end{align*}

 with $\chi$ as in the introduction and $n\in \N$. We also use a number of related quantities in which we minimize over $\chi$ or $u$ and emphasize the role of the boundary condition.
\end{itemize}

Further, for the characteristic functions from the introduction, we set
\begin{align}
\label{eq:char_func_norm}
\tilde{\chi}_{j,j}:= \chi_{j,j}- F_{j,j},
\end{align}
and often view these as functions on $\T^n$ (instead of as functions on $\Omega$) by considering the periodic extension.

Moreover, since our lower bound estimates will rely on arguments in Fourier space we introduce our notation of Fourier multipliers. A function $m: \R^n \rightarrow \R $ with $|m(k)| \leq C(1+|k|^2)^s$ for some $s\in \R$ (and in particular for $s=0$) gives rise to a \emph{Fourier multiplier} $m(D)$ defined as 
\begin{align}
\label{eq:mult}
\F(m(D)u)(k):= m(k) \F u(k).
\end{align}
Here given $u\in L^1(\T^n)$
$$
\F u(k):= \int_{\T^n} e^{- 2\pi i k \cdot x} u(x) dx, \quad k\in\Z^n,
$$
denotes the \emph{$k$-th Fourier coefficient} of $u$.
If there is no ambiguity we will also use the notation $\hat u=\F u$.
Also, for clarity of exposition, we will denote with $x$ the space variable and with $k$ the frequency variable.

For further use, we recall a corollary of the Marcinkiewicz multiplier theorem on $\R^n$ (see, for instance, \cite[Corollary 6.2.5]{Grafakos}) which, in combination with the transference principle (see e.g.~\cite[Theorem 4.3.7]{Grafakos}), provides $L^p$-$L^p$ bounds of $C^n$ regular Fourier multipliers provided a suitable decay of their derivatives holds.
\begin{prop}\label{prop:grafakos}
Let $m$ be a bounded $C^\infty$ function.
Assume that for all $h\in\{1,\dots,n\}$, all distinct $j_1,\dots, j_{h}\in\{1,\dots,n\}$, and all $k_j\in\R\setminus\{0\}$ with $j\in\{j_1,\dots,j_h\}$ we have
\begin{equation}\label{eq:decay-multiplier}
|\p_{j_1}\dots\p_{j_{h}} m(k)|\le A|k_{j_1}|^{-1}\dots|k_{j_{h}}|^{-1}
\end{equation}
for some $A>0$.
Then for all $p\in(1,\infty)$, there exists a constant $C_n>0$ depending on the dimension such that for every $u\in L^p(\T^n;\C)$ there holds
$$
\|m(D) u\|_{L^p}\le\|u\|_{L^p} C_n(A+\|m\|_{L^\infty})\max\Big\{p,\frac{1}{p-1}\Big\}^{6n}.
$$
\end{prop}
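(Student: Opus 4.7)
The plan is to combine two off-the-shelf results cited in the statement: the Marcinkiewicz multiplier theorem on $\R^n$ (Grafakos, Corollary 6.2.5) and the de Leeuw-type transference principle from $\R^n$ to $\T^n$ (Grafakos, Theorem 4.3.7). No novel estimate needs to be produced; the entire argument amounts to tracking constants through these two theorems.

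First I would view $m:\R^n\to\R$ as a Fourier multiplier symbol on $\R^n$. The hypothesis \eqref{eq:decay-multiplier}, applied to every non-empty subset $\{j_1,\dots,j_h\}\subset\{1,\dots,n\}$ at points with $k_{j_i}\neq 0$, is exactly the product-type Marcinkiewicz condition in which each mixed partial is controlled by the inverse of the product of the corresponding coordinates. Feeding $m$ into Corollary 6.2.5 of Grafakos therefore produces, for every $p\in(1,\infty)$, the bound
$$
\|m(D)f\|_{L^p(\R^n)}\le C_n\bigl(A+\|m\|_{L^\infty}\bigr)\max\bigl\{p,(p-1)^{-1}\bigr\}^{6n}\|f\|_{L^p(\R^n)},
$$
with $C_n$ depending only on the dimension. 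The factor $\max\{p,(p-1)^{-1}\}^{6n}$ arises because the Littlewood--Paley decomposition underlying the theorem introduces a bound of the form $\max\{p,(p-1)^{-1}\}^{6}$ per coordinate direction, yielding a total of $6n$ when all $n$ directions are combined.

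The second step is to pass from $\R^n$ to $\T^n$. Since $m$ is $C^\infty$ (and in particular continuous at every lattice point), its restriction to $\Z^n$ defines a periodic symbol that, via Poisson summation / trigonometric-polynomial dilation, coincides with the multiplier appearing in \eqref{eq:mult}. The transference principle (Grafakos, Theorem 4.3.7) then asserts that the $L^p(\T^n;\C)\to L^p(\T^n;\C)$ operator norm of $m|_{\Z^n}(D)$ is controlled by the $L^p(\R^n)\to L^p(\R^n)$ operator norm of the continuous multiplier $m(D)$. Concatenating this with the estimate of the previous paragraph yields the claimed bound.

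The only point that warrants a moment's care is the hypothesis of the transference principle, which requires a sufficiently regular symbol on $\R^n$ whose restriction to $\Z^n$ agrees with the periodic multiplier; the $C^\infty$ assumption on $m$ makes this automatic, so no additional work is needed and no genuine obstacle arises. The step of optimising or re-deriving the explicit dependence $\max\{p,(p-1)^{-1}\}^{6n}$ of the constant would be the only non-routine part, but since this is already built into the cited Grafakos corollary, the proof reduces to a clean concatenation of citations.
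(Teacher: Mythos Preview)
Your proposal is correct and matches the paper's treatment: the paper does not give a proof but simply states the result as a direct combination of the Marcinkiewicz multiplier theorem on $\R^n$ (Grafakos, Corollary 6.2.5) with the transference principle (Grafakos, Theorem 4.3.7), exactly as you outline.
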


We end this section by recalling the notion of \emph{lamination-convex hull} (see for instance \cite{D04,M1}).

\begin{defi}[The lamination convex hull]
\label{defi:laminates}
Let $K \subset \R^{n\times m}$ for $m,n \geq 2$. The \emph{lamination convex hull of the set $K$}, $K^{(lc)}$, is defined as
\begin{align*}
K^{(lc)
}:= \bigcup\limits_{j=0}^{\infty} K^{(j)}, \ \mbox{ with }  K^{(0)}:= K,
\end{align*}
and
\begin{align*}
K^{(j+1)}:=K^{(j)}\cup \{\lambda A+ (1-\lambda) B: \ A, B \in K^{(j)}; \ \text{rank}(A-B)=1, \ \lambda \in [0,1]\}.
\end{align*}
The elements in $K^{(j+1)}\setminus K^{(j)}$ are called \emph{laminates of order $j+1$}.
\end{defi}

\section{On the Two-Well Problem with Elastic Energy of $p$-Growth: Proof of Theorem \ref{thm:p_dependence_two_wells}}
\label{sec:Lp}

In this section we consider elastic energies of $p$-growth of the form
$$
E_{el}^{(p)}(u)=\int_{(0,1)^2}\dist^p(\nabla u,\{A,B\})dx,
$$
where $A,B\in \diag(2,\R)$ are rank-1-connected and where $u$ attains affine Dirichlet data in $\R^2 \setminus [0,1]^2$ which are in the lamination convex hull of $\{A,B\}$.
Without loss of generality, by rotation and scaling, we may assume that
\begin{equation}\label{eq:AB}
A=\begin{pmatrix} 1-\lambda & 0 \\ 0 & 0 \end{pmatrix}, \quad B=\begin{pmatrix} -\lambda & 0 \\ 0 & 0 \end{pmatrix}, \quad \text{for some }\lambda\in(0,1),
\end{equation}
with $u$ attaining zero boundary conditions.
We will consider perturbations of these elastic energies by a higher order, ``surface energy'' contribution of the form
$$
E_{surf}(u)=\|D^2 u\|_{TV((0,1)^2)}
$$
in order to detect the energy scaling of microstructures via minimization  and to prove Theorem \ref{thm:p_dependence_two_wells}. 
We thus study upper and lower scaling-bounds of the total energy
\begin{equation}\label{eq:tot_p_en}
E_\epsilon^{(p)}(u):=E_{el}^{(p)}(u)+\epsilon E_{surf}(u)
\end{equation}
where $p\in[1,\infty)$.
We also denote the \emph{localized total energy} by
\begin{equation}\label{eq:tot_p_en_loc}
E_\epsilon^{(p)}(u;\Omega)=\int_{\Omega}\dist^p(\nabla u,\{A,B\})dx+\|D^2 u\|_{TV(\Omega)}
\end{equation}
for every $\Omega\subset (0,1)^2$.
Both for the upper and lower bounds we borrow the techniques used in \cite{CC15}, as they can be adapted to general $p$-growth conditions without much effort.  

\subsection{Upper bounds: first-order branching}
We obtain the upper scaling bounds of energies of the form \eqref{eq:tot_p_en} by means of a branching construction. These constructions originate from the work \cite{KM1, KM2}, similar constructions are present in the unpublished manuscript \cite{CM97} which was made available to us.
The result below, proved in the case of $p=2$ in \cite[Lemma 2.1]{CC15}, quantifies the total energy contribution of a building block of a branching construction.
We here repeat the proof for the reader's convenience.

\begin{figure}[t]
\begin{center}
\includegraphics{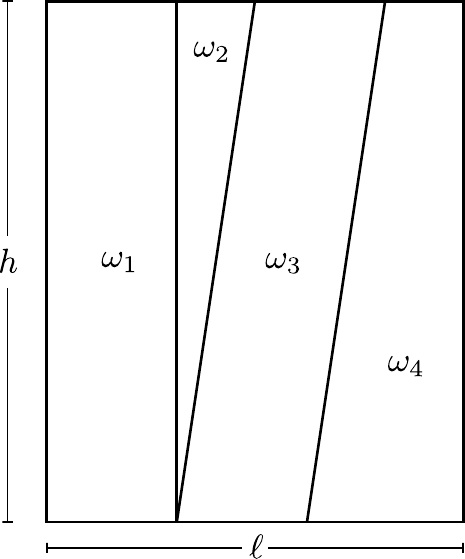}
\end{center}
\caption{Example of a building block of a (vertical) branching construction as given in Lemma \ref{lem:loc-vert}.}
\label{fig:cell2}
\end{figure}

\begin{lem}\label{lem:loc-vert}
Let $0<\ell<h\le1$, $\omega=[0,\ell]\times[0,h]$, $A,B$ as in \eqref{eq:AB} and let $E_\epsilon^{(p)}$ be defined by \eqref{eq:tot_p_en_loc}.
Then there exists a piecewise affine function $v\in W^{1,\infty}(\omega;\R^2)$ such that
\begin{equation}\label{eq:lem-loc-vert-BC}
v(\ell,x_2)=v(0,x_2)=0, \quad \text{for every } x_2\in[0,h],
\end{equation}
\begin{equation}\label{eq:lem-loc-vert-self}
v\Big(\frac{x_1}{2},h\Big)=v\Big(\frac{\ell}{2}+\frac{x_1}{2},h\Big)=\frac{1}{2} v(x_1,0), \quad \text{for every } x_1\in[0,\ell],
\end{equation}
and
\begin{equation}\label{eq:lem-loc-vert-e}
E_\epsilon^{(p)} (v;\omega) +Per(\omega) \le (1-\lambda)^p \frac{\ell^{p+1}}{h^{p-1}}+8\epsilon h,
\end{equation}
for every $p\in[1,\infty)$ and $\epsilon>0$.
\end{lem}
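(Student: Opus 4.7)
The plan is to explicitly construct a continuous, piecewise affine ``building block'' $v=(v_1,0)$ on $\omega$ that morphs a one-period tent sawtooth at $x_2=0$ into a two-period (half amplitude, half period) tent at $x_2=h$, and then to estimate its elastic and surface contributions directly. I would first fix the bottom tent profile $f:[0,\ell]\to\R$ by $f(x_1)=(1-\lambda)x_1$ for $x_1\in[0,\lambda\ell]$ and $f(x_1)=\lambda(\ell-x_1)$ for $x_1\in[\lambda\ell,\ell]$: this is the unique tent with zero boundary values whose derivative takes only the values $1-\lambda$ and $-\lambda$. I would then partition $\omega$ into four polygonal cells by three straight segments issuing from the branching vertex $P:=(\lambda\ell,0)$ and terminating at the break-points $(\lambda\ell/2,h)$, $(\ell/2,h)$, $((1+\lambda)\ell/2,h)$ of the required top trace. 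On the leftmost trapezoidal cell I would set $v_1:=(1-\lambda)x_1$, on the rightmost cell $v_1:=\lambda(\ell-x_1)$, and on each of the two transition triangles with apex at $P$, $v_1$ is the unique affine function determined by its three vertex values---namely $\lambda(1-\lambda)\ell$ at $P$ and the values at the two top vertices prescribed by $\tfrac{1}{2}f(2\cdot)$ read periodically modulo $\ell$.

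By construction $v_1$ vanishes on $\{x_1=0\}$ and $\{x_1=\ell\}$, giving \eqref{eq:lem-loc-vert-BC}, and its top trace realises the self-similar scaling \eqref{eq:lem-loc-vert-self}. For the elastic bound, the two extremal cells contribute nothing since $\nabla v\in\{A,B\}$ exactly there. On each of the two transition triangles the three affine vertex values determine $\nabla v_1$ uniquely: a short linear-algebra computation shows $\p_1 v_1\in\{1-\lambda,-\lambda\}$ (because the top-trace has those slopes and continuity at $P$ fixes the free parameter) and $|\p_2 v_1|\le(1-\lambda)\ell/(2h)$. Since each triangle has area at most $\ell h/4$ and $\dist(\nabla v,K)=|\p_2 v_1|$, integration of the $p$-th power gives $E_{el}^{(p)}(v;\omega)\le(1-\lambda)^p\,\ell^{p+1}/h^{p-1}$ after absorbing numerical constants.

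The surface contribution is elementary: the three interior interfaces are segments of length $\le\sqrt{h^2+(\ell/2)^2}\le h+\ell/2\le 3h/2$ each, and $\Per(\omega)=2(\ell+h)\le 4h$, so $\|D^2 v\|_{TV(\omega)}+\Per(\omega)\le 8h$; multiplying by $\epsilon$ (as is implicit in the statement \eqref{eq:lem-loc-vert-e}) gives the surface bound $8\epsilon h$. I expect the main obstacle to be the careful bookkeeping of the $\lambda$-dependent prefactor in the elastic bound: the naive fan computation directly produces a coefficient of the form $\lambda(1-\lambda)[\lambda^{p-1}+(1-\lambda)^{p-1}]$ in front of $\ell^{p+1}/h^{p-1}$, and distilling this down to $(1-\lambda)^p$ requires the symmetry $A\leftrightarrow B$ (which allows one to restrict without loss of generality to $\lambda\le\tfrac12$) together with the trivial estimates $\lambda^{p-1}+(1-\lambda)^{p-1}\le 2(1-\lambda)^{p-1}$ and $\lambda\le 1-\lambda$. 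Beyond this the argument is an elementary computation on finitely many polygons, exactly in the spirit of \cite[Lemma 2.1]{CC15}.
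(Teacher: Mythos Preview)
Your construction is a legitimate ``fan'' variant of the Conti--Chan building block, but it differs from the paper's in a way that creates a real (if ultimately cosmetic) gap in the prefactor. In the paper the cell is cut by \emph{two} vertical/slanted interfaces starting at $(\lambda\ell/2,0)$ and one at $(\lambda\ell,0)$; crucially, only a single parallelogram $\omega_3$ carries nonzero elastic energy, and on it $\partial_1 v_1=1-\lambda$ so that $\dist(\nabla v,K)=(1-\lambda)\ell/(2h)$. This gives an elastic contribution $\frac{\lambda(1-\lambda)^p}{2^{p+1}}\,\ell^{p+1}/h^{p-1}\le (1-\lambda)^p\,\ell^{p+1}/h^{p-1}$ for \emph{every} $\lambda\in(0,1)$, with no symmetry reduction needed.

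Your fan, by contrast, has \emph{two} transition triangles with apex at $P=(\lambda\ell,0)$: on one $\partial_2 v_1=-\lambda\ell/(2h)$ and on the other $\partial_2 v_1=-(1-\lambda)\ell/(2h)$. Your own computation of the resulting coefficient $\lambda(1-\lambda)[\lambda^{p-1}+(1-\lambda)^{p-1}]$ is correct, but the proposed ``WLOG $\lambda\le\tfrac12$ by $A\leftrightarrow B$'' does \emph{not} recover the stated bound when $\lambda>\tfrac12$: swapping $A$ and $B$ replaces $\lambda$ by $1-\lambda$, so the bound you obtain after swapping is $\lambda^p\,\ell^{p+1}/h^{p-1}$, which is \emph{larger} than the claimed $(1-\lambda)^p\,\ell^{p+1}/h^{p-1}$. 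Concretely, for $p=2$ and $\lambda$ close to $1$ your elastic term scales like $(1-\lambda)\,\ell^3/h$, not $(1-\lambda)^2\,\ell^3/h$. So your claim ``$|\partial_2 v_1|\le(1-\lambda)\ell/(2h)$'' is false on the first triangle for $\lambda>\tfrac12$, and the lemma as stated does not follow from your construction.

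Two minor points: (i) in estimating $\|D^2 v\|_{TV}$ you sum only interface \emph{lengths}, ignoring the size of the gradient jumps (which are $\sim\sqrt{1+(\ell/2h)^2}$); this costs an extra factor and your crude bound $3\cdot\tfrac{3h}{2}+4h>8h$ already overshoots the constant $8$. (ii) The perimeter term in \eqref{eq:lem-loc-vert-e} is tacitly weighted by $\epsilon$ in the paper's bookkeeping as well, so your reading there is fine.

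None of this affects the scaling conclusions downstream---Lemma~\ref{prop:vert-bra} immediately converts the prefactor into $\dist^p(F,\{A,B\})=\min(\lambda,1-\lambda)^p$ anyway, and there the $A\leftrightarrow B$ swap \emph{is} legitimate. But to prove Lemma~\ref{lem:loc-vert} exactly as stated, you should either adopt the paper's partition (keep the leftmost interface vertical so that only one sheared region, with $\partial_1 v_1=1-\lambda$, appears) or explicitly relax the prefactor to $\max(\lambda,1-\lambda)^p$.
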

\begin{proof}
We subdivide $\omega$ into the following subsets:
\begin{align*}
\omega_1 &= \Big\{(x_1,x_2)\in\omega \,:\, 0<x_1<\frac{\lambda\ell}{2}\Big\},\\
\omega_2 &= \Big\{(x_1,x_2)\in\omega \,:\, \frac{\lambda\ell}{2}<x_1<\frac{\lambda\ell}{2}+\frac{(1-\lambda)\ell\,x_2}{2h}\Big\},\\
\omega_3 &= \Big\{(x_1,x_2)\in\omega \,:\, \frac{\lambda\ell}{2}+\frac{(1-\lambda)\ell\,x_2}{2h}<x_1<\lambda\ell+\frac{(1-\lambda)\ell\,x_2}{2h}\Big\},\\
\omega_4 &= \Big\{(x_1,x_2)\in\omega \,:\, \lambda\ell+\frac{(1-\lambda)\ell\,x_2}{2h}<x_1<\ell\Big\},
\end{align*}
(see Figure \ref{fig:cell2})
and define $v$ with $v_2(x_1,x_2)=0$ by integration of $\p_1 v_1(\cdot,x_2)$ on $(x_1,x_2) \in  \omega_j$, where $\p_1v_1=1-\lambda$ on $\omega_1\cup\omega_3$ and $\p_1v_1=-\lambda$ on $\omega_2\cup\omega_4$ ;
$$
v_1(x_1,x_2)=\begin{cases}
(1-\lambda)x_1 & (x_1,x_2)\in\omega_1, \\
-\lambda x_1+\frac{\lambda\ell}{2} & (x_1,x_2)\in\omega_2 ,\\
(1-\lambda)x_1-\frac{(1-\lambda) \ell x_2}{2h} & (x_1,x_2)\in\omega_3, \\
-\lambda x_1+\lambda\ell & (x_1,x_2)\in\omega_4.
\end{cases}
$$
Here we have chosen the initial value of the integration to be zero at $x_1 =0$.
Thus, we have that $v\in W^{1,\infty}(\omega;\R^2)$,  \eqref{eq:lem-loc-vert-BC} and \eqref{eq:lem-loc-vert-self} are satisfied and
\eqref{eq:lem-loc-vert-e} follows by noticing that $\nabla v=A$ in $\omega_1$, $\nabla v=B$ in $\omega_2\cup\omega_4$ and
$$
\nabla v=\begin{pmatrix}1-\lambda&-\frac{(1-\lambda)\ell}{2h}\\0&0\end{pmatrix} \quad \text{in }\omega_3.
$$
\end{proof}

Using the building block construction from Lemma \ref{lem:loc-vert},
we now quantify the total energy of a periodic branching construction on rectangles of dimension $L\times H$ (see Figure \ref{fig:bra-proof}).
We mimic the proof of \cite[Lemma 2.3]{CC15}. Relying on this lemma in many further upper bound constructions in the following sections, we here formulate the result in general rectangles and thus slightly more general than needed for the upper bound in Theorem \ref{thm:p_dependence_two_wells}.

\begin{figure}[t]
\begin{center}
\includegraphics{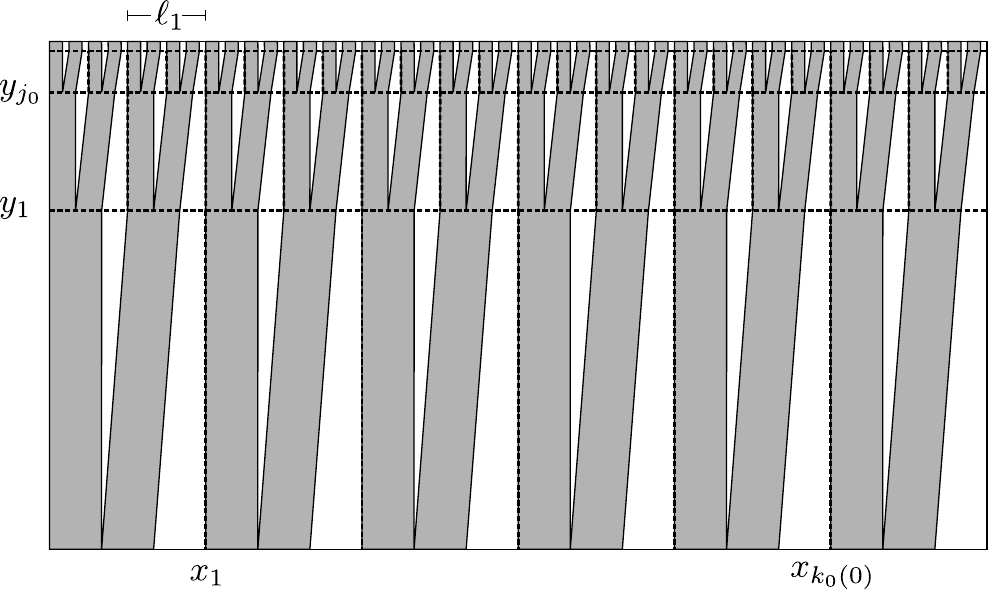}
\end{center}
\caption{Branching construction on $R'$ as described in the proof of Lemma \ref{prop:vert-bra}.}
\label{fig:bra-proof}
\end{figure}

\begin{lem}\label{prop:vert-bra}
Let $0<L, H\le1$, $x_0\in[0,1-L]\times[0,1-H]$, $R=x_0+[0,L]\times[0,H]$ and $N\in\N$ such that $N>\frac{4L}{H}$ and let $E_\epsilon^{(p)}$ be as in \eqref{eq:tot_p_en_loc}.
Let $A,B\in\diag(2,\R)$ be such that $A-B=|A-B|e_1\otimes e_1$, $F=\lambda A+(1-\lambda)B$ for some $0<\lambda<1$.
Then there exists $u\in W^{1,\infty}(R;\R^2)$ with $\nabla u\in BV(R;\R^{2\times 2})$ and with $u(x)=Fx+b$ for every $x\in\partial R$, $b\in\R^2$ such that
\begin{equation}\label{eq:prop-vbra}
E_\epsilon^{(p)}(u;R) \lesssim \dist^p(F,\{A,B\}) \frac{L^{p+1}}{N^p H^{p-1}}+\epsilon HN
\end{equation}
for every $p\in[1,\infty)$ and $\epsilon>0$.
\end{lem}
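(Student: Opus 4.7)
The plan is to mimic the strategy of \cite[Lemma~2.3]{CC15}: iterate the building block of Lemma~\ref{lem:loc-vert} into a self-similar vertical branching construction on $R$, with aspect ratios tuned so that both the elastic and the surface contributions sum to geometric series of the right order.

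I would begin by translating to $R=[0,L]\times[0,H]$, setting $\bar u(x):=u(x)-Fx$, and rescaling by $|A-B|^{-1}$. This transforms the problem into a zero-Dirichlet problem on wells parallel to $e_1\otimes e_1$, at the price of a multiplicative factor $\sim\dist^p(F,\{A,B\})$ in the elastic energy. By reflection across $x_2=H/2$ it suffices to build $\bar u$ on $R_{top}=[0,L]\times[H/2,H]$. There I would stack $J^\ast$ horizontal slabs: slab $j\in\{0,\dots,J^\ast-1\}$ consists of a single row of $2^jN$ copies of Lemma~\ref{lem:loc-vert}'s building block of dimensions $\ell_j\times h_j:=L/(2^jN)\times h_0\theta^j$, with $h_0=(1-\theta)H/2$ and a fixed $\theta\in(2^{-p/(p-1)},1/2)$ for $p>1$ (any $\theta\in(0,1/2)$ for $p=1$); this interval is non-empty for every $p\ge 1$. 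The self-similarity relation \eqref{eq:lem-loc-vert-self} ensures that neighbouring slabs match continuously, while \eqref{eq:lem-loc-vert-BC} handles horizontal neighbours within a slab and the two vertical sides of $R$. The integer $J^\ast\sim\log_{1/(2\theta)}(NH/L)$ is the last generation for which the aspect-ratio hypothesis $\ell_j<h_j$ of Lemma~\ref{lem:loc-vert} is valid (the assumption $N>4L/H$ ensures $J^\ast\ge 1$), and the thin remaining strip of thickness $\sim h_0\theta^{J^\ast}$ adjacent to $\{x_2=H\}$ is closed off by a standard piecewise-linear boundary-layer interpolation between the finest-scale tent profile and the zero Dirichlet datum.

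Summing the per-block bound \eqref{eq:lem-loc-vert-e} over the $2^jN$ blocks of slab $j$ produces a per-slab contribution of order
\[
\frac{L^{p+1}}{N^p h_0^{p-1}}\,(2^p\theta^{p-1})^{-j}\;+\;N\epsilon h_0\,(2\theta)^j.
\]
The constraints $2\theta<1$ and $2^p\theta^{p-1}>1$ ensure that both resulting series over $j\in\{0,\dots,J^\ast-1\}$ are bounded by constants (depending only on $p$ and $\theta$) times $L^{p+1}/(N^pH^{p-1})$ and $\epsilon NH$, respectively. The boundary-layer elastic contribution is of order $LH\theta^{J^\ast}+L^{p+1}(2^p\theta^{p-1})^{-J^\ast}/(N^pH^{p-1})$, and a short computation shows that the choice $\theta>2^{-p/(p-1)}$ is precisely what forces $\theta^{J^\ast}\lesssim(L/(NH))^p$, absorbing this contribution into the target bound; the boundary-layer surface contribution of order $\epsilon L$ is then absorbed into $\epsilon HN$ since $L<HN/4$. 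Re-introducing the $|A-B|^p$ factor from the reduction and reflecting to $R_{bot}$ yields the claim. The main technical point is the simultaneous requirement that the heights $h_j$ decay fast enough to keep the surface series bounded while not so fast as to make the elastic series diverge---a window $(2^{-p/(p-1)},1/2)$ that closes up as $p\to\infty$ but is open for every finite $p\ge 1$, and which is also precisely what controls the residual boundary-layer cost.
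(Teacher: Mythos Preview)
Your proposal is correct and follows essentially the same route as the paper's proof: reduce to the normalized wells, build the self-similar branching construction on the upper half-rectangle by stacking $2^jN$ copies of the Lemma~\ref{lem:loc-vert} block in slab $j$ with heights $h_j\sim H\theta^j$, stop at the last generation with $\ell_j<h_j$, close off with a linear cut-off layer, and sum the two geometric series under the constraint $\theta\in(2^{-p/(p-1)},1/2)$. Your treatment of the cut-off layer is in fact slightly more explicit than the paper's: where the paper simply notes that the cut-off contribution is comparable to the $(j_0{+}1)$-th term of the series, you verify directly that $\theta>2^{-p/(p-1)}$ is exactly the condition giving $\theta^{J^\ast}\lesssim(L/(NH))^p$, which is a cleaner way to see why the same window on $\theta$ governs both the series convergence and the residual layer.
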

\begin{proof}
We first consider $A$ and $B$ as in \eqref{eq:AB}, $F=0$, $b=0$ and $x_0=0$.
We will then recover the general case at the end of the proof.

Let $\theta$ be a geometric constant such that
\begin{equation}\label{eq:theta}
\theta\in\begin{cases}
(2^{-\frac{p}{p-1}},\frac{1}{2}) & p>1, \\
(0,\frac{1}{2}) & p=1.
\end{cases}
\end{equation}
Consider the upper half of the rectangle $R$, that is $R':=[0,L]\times[\frac{H}{2},H]$.
Set
\begin{equation*}\label{eq:cut-i}
y_j=H-\frac{H}{2}\theta^j,
\end{equation*}
we cut $R'$ horizontally at height $y_j$ obtaining the intervals $[0,L]\times[y_j,y_{j+1}]$ with $j\in\{0,\dots,j_0+1\}$ where $j_0$ is chosen below.
We further subdivide each of these horizontal rectangles in $N2^j$ smaller intervals (of the same width), by cutting $[0,L]\times[y_j,y_{j+1}]$ vertically.
Set
\begin{equation}\label{eq:dim-i}
\ell_j=\frac{L}{2^j N}
\quad \text{and} \quad
h_j=y_{j+1}-y_j=\theta^j\frac{H(1-\theta)}{2},
\end{equation}
we have reduced $R'$ to a union of rectangles of dimensions $\ell_i\times h_i$ (see Figure \ref{fig:bra-proof}).
In order to use Lemma \ref{lem:loc-vert} to define a self-similar construction, we stop this subdivision when $j=j_0$, with $j_0$ the last index for which $\ell_{j_0}<h_{j_0}$.
Notice that on the one hand by the condition $N>\frac{4L}{H}$, $\ell_0<h_0$, and on the other hand, by the condition that $\theta<\frac{1}{2}$, for all $j>0$ large enough, it holds that $\ell_j>h_j$. As a consequence such a $j_0$ exists.
We define the rectangles $\omega_{j,k}$ described in the lines above as follows
\begin{equation}\label{eq:cells-bra}
\omega_{j,k}:=\begin{cases}\displaystyle
(k\ell_j,y_j)+[0,\ell_j]\times[0,h_j] & j=0,\dots,j_0,\quad k=0,\dots, N2^j-1, \\ 
\\ \displaystyle
(k\ell_{j_0},y_{j_0+1})+[0,\ell_{j_0}]\times\Big[0,\frac{H}{2}\theta^{j_0+1}\Big] & j=j_0+1,\quad k=0,\dots, N2^{j_0}-1.
\end{cases}
\end{equation}
From Lemma \ref{lem:loc-vert}, for every $j\in\{0,\dots,j_0\}$, we can find on each rectangle $[0,\ell_j]\times[0,h_j]$ a test function $v_j:[0,\ell_j]\times [0,h_j] \rightarrow \R^2$ satisfying \eqref{eq:lem-loc-vert-BC}, \eqref{eq:lem-loc-vert-self} and \eqref{eq:lem-loc-vert-e}.
We thus define $v\in W^{1,\infty}(R';\R^2)$ by setting
$$
v(x):=\begin{cases}
v_j(x-(k\ell_j,y_j)) &x\in\omega_{j,k},\\ 
\\ \displaystyle
\frac{H-x_2}{H-y_{j_0+1}}v_{j_0}(x_1-k\ell_{j_0},h_{j_0}) &x\in\omega_{j_0+1,k},
\end{cases}
$$
for every $j\in\{0,\dots,j_0+1\}$, $k\in\{0,\dots,k_0(j)\}$ with $k_0(j):=N2^j$ if $j\le j_0$ and $k_0(j_0+1):=N2^{j_0}$. 
Reasoning symmetrically on $[0,L]\times[0,\frac{H}{2}]$, we obtain $v\in W_0^{1,\infty}(R;\R^2)$ thanks to \eqref{eq:lem-loc-vert-BC} and \eqref{eq:lem-loc-vert-self}.
By construction and \eqref{eq:lem-loc-vert-e} we have
\begin{align*}
E_\epsilon^{(p)}(v;R) &\le \sum_{j=0}^{j_0+1}\sum_{k=0}^{k_0(j)}E_\epsilon^{(p)}(v;\omega_{j,k}) +Per(\omega_{j,k}) \lesssim \sum_{j=0}^{j_0+1}N2^j\Big((1-\lambda)^p\frac{\ell_j^{p+1}}{h_j^{p-1}}+8\epsilon h_j\Big).
\end{align*}
In the inequality above we have also used the fact that the contribution of the cut-off term on $\omega_{j_0+1,k}$ is of the order $h_{j_0}\ell_{j_0}+\epsilon h_{j_0}$ which is comparable with the $(j_0+1)$-th term of the sum above for the definition of $j_0$.
Substituting \eqref{eq:dim-i} into this expression, we obtain
$$
E_\epsilon^{(p)}(v;R) \lesssim \sum_{j=0}^\infty\Big((1-\lambda)^p\frac{L^{p+1}}{N^p H^{p-1}}\Big(\frac{1}{2^p\theta^{p-1}}\Big)^j+\epsilon (2\theta)^j HN\Big).
$$
This is a converging series thanks to the condition \eqref{eq:theta} and eventually, we obtain
\begin{equation}\label{eq:lem-bra-simp}
E_\epsilon^{(p)}(v;R)\lesssim (1-\lambda)^p\frac{L^{p+1}}{N^p H^{p-1}}+\epsilon HN.
\end{equation}

We now return to the general case.
Then considering $u(x)=|A-B|v(x+x_0)+Fx+b$  with $v$ as constructed in the particular case from above, the following relation holds
$$
\dist(\nabla u,\{A,B\})=|A-B|\dist\Big(\nabla v,\Big\{\begin{pmatrix}1-\lambda&0\\0&0\end{pmatrix},\begin{pmatrix}-\lambda&0\\0&0\end{pmatrix}\Big\}\Big)
$$
and \eqref{eq:prop-vbra} comes from \eqref{eq:lem-bra-simp} and by possibly switching the roles of $A$ and $B$ in the construction of Lemma \ref{lem:loc-vert} so that $|A-B|(1-\lambda)=\dist(F,\{A,B\})$.
\end{proof}

We refer to the construction in the normalized setting (for which $A,B$ are as in \eqref{eq:AB}) of the proof of Lemma \ref{prop:vert-bra} as a branching construction with \emph{oscillation in the direction $e_1$} and \emph{branching in the direction $e_2$}.

An immediate consequence of the previous lemma is the following upper scaling bound for the two-well problem for elastic energies of $p$-growth.

\begin{prop}\label{prop:first-order}
Let $A,B\in\diag(2,\R)$ be such that $\rank(A-B)=1$, let $\lambda$ and $F$ be as in the statement of Lemma \ref{prop:vert-bra} and let $E_\epsilon^{(p)}$ be defined as in \eqref{eq:tot_p_en}.
Then, for every $\epsilon\in(0,1)$ sufficiently small there exists $u_\epsilon\in W^{1,\infty}((0,1)^2;\R^2)$ with $\nabla u_\epsilon\in BV(\R^2;\R^{2\times2})$ and with $u_\epsilon(x)=Fx+b$ for every $x\in\p(0,1)^2$ and $b\in\R^2$ such that
$$
E_\epsilon^{(p)}(u_\epsilon)\lesssim\dist^p(F,\{A,B\})\epsilon^\frac{p}{p+1}.
$$
\end{prop}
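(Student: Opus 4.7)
The plan is to deduce Proposition \ref{prop:first-order} as a direct corollary of Lemma \ref{prop:vert-bra} by selecting the branching frequency $N$ so as to balance the two competing terms on its right-hand side. Concretely, I would apply Lemma \ref{prop:vert-bra} to the whole square $R = (0,1)^2$ (so $L = H = 1$ and $x_0 = 0$), which produces, for every integer $N > 4$, a competitor $u_N \in W^{1,\infty}((0,1)^2;\R^2)$ with $\nabla u_N \in BV$, affine Dirichlet data $Fx+b$ on $\partial(0,1)^2$, and the explicit estimate
$$
E_\epsilon^{(p)}(u_N) \lesssim \dist^p(F,\{A,B\})\, N^{-p} + \epsilon N.
$$

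The two terms on the right-hand side balance when $N^{p+1} \sim \dist^p(F,\{A,B\})\,\epsilon^{-1}$. I would therefore choose $N = N_\epsilon$ as the smallest integer above $\bigl(\dist^p(F,\{A,B\})/\epsilon\bigr)^{1/(p+1)}$. Since $\dist(F,\{A,B\})$ is a positive constant depending only on $F$ and $\{A,B\}$, for all $\epsilon$ small enough (with a threshold depending on $F$ and $\{A,B\}$) this choice satisfies $N_\epsilon > 4$, so that Lemma \ref{prop:vert-bra} may indeed be applied, and the rounding introduces only a multiplicative constant. Substituting back yields both terms controlled by a constant multiple of $\dist^{p/(p+1)}(F,\{A,B\})\,\epsilon^{p/(p+1)}$, which, after absorbing the $F$-dependent factor into a constant depending on $F$ and $\{A,B\}$, gives the required upper bound of order $\epsilon^{p/(p+1)}$ stated in the proposition.

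There is essentially no obstacle to overcome here: the construction, the regularity of the competitor, the boundary condition, and the surface-energy bookkeeping are all already packaged into Lemma \ref{prop:vert-bra}, and the proposition is obtained simply by minimizing its right-hand side in $N$. The only point that requires a brief check is that the optimal $N_\epsilon$ lies in the admissible regime $N > 4L/H = 4$, which is ensured by taking $\epsilon$ sufficiently small (depending on $F$ and on $|A-B|$, since the prefactor $\dist^p(F,\{A,B\})$ enters the optimal choice of $N$). The $F$-dependent constants in the statement are absorbed into the $\lesssim$ symbol.
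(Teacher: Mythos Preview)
Your proposal is correct and follows essentially the same route as the paper: apply Lemma \ref{prop:vert-bra} on the unit square with $L=H=1$ and then optimize the branching frequency $N$ (equivalently, the paper's length scale $r=4/N$) to balance the elastic and surface terms. The only step you left implicit is the one-line reduction to the case $A-B=|A-B|\,e_1\otimes e_1$ (by swapping $x_1$ and $x_2$ if necessary), which is needed to match the hypothesis of Lemma \ref{prop:vert-bra}; otherwise the argument is identical.
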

\begin{proof}
Up to switching the roles of $x_1$ and $x_2$ we can reduce to the case $A-B=|A-B|e_1\otimes e_1$. 
We then apply Lemma \ref{prop:vert-bra} with $H=L=1$ and $N=\frac{4}{r}$ for some $0<r<1$ obtaining $u$ such that
$$
E_\epsilon^{(p)}(u)\lesssim\dist^p(F,\{A,B\})\Big(r^p+\frac{\epsilon}{r}\Big).
$$
Optimizing in $r$, we infer that $r\sim\epsilon^\frac{1}{p+1}$, and the result follows.
\end{proof}

\subsection{Lower bounds through a localization argument}
In order to obtain lower scaling bounds for every $p\in[1,\infty)$, we proceed along the lines of \cite{CC15} which in contrast to other, e.g. Fourier-based techniques, generalizes easily to arbitrary values of $p\in[1,\infty)$.

We premise the following well-known result which will be useful in the sequel.
This is a Poincar\'e inequality for $BV$ functions stated in \cite{CC15}, compounding the results from \cite[Theorem 3.44 and Remark 3.45]{AFP}.

\begin{lem}\label{lem:poincBV}
Let $A \subset \R^n$ be a bounded, connected, Lipschitz regular domain.
Then for every $v\in BV_{loc}(\R^n;\R^m)$ there holds
$$
\int_{A}|v-\langle v\rangle_{A}|dx \le C \|Dv\|_{TV}(A)
$$
where $C\le\diam(A)$ is a positive constant.
\end{lem}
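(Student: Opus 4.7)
The plan is to reduce the statement to the classical $L^1$ Poincaré–Wirtinger inequality for smooth functions by a density argument, exploiting the well-known approximation of $BV$ functions by smooth functions in the sense of strict convergence.

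First I would invoke the standard mollification theorem for $BV$ functions (see \cite[Theorem 3.9]{AFP}) to obtain a sequence $v_k\in C^\infty(A;\R^m)\cap W^{1,1}(A;\R^m)$ with $v_k\to v$ in $L^1(A;\R^m)$ and $\|\nabla v_k\|_{L^1(A)}\to\|Dv\|_{TV}(A)$. The $L^1$-convergence implies $\langle v_k\rangle_A\to\langle v\rangle_A$, so it suffices to prove the inequality for each $v_k$ with a constant $C$ depending only on $A$ and then pass to the limit.

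For the smooth case, I would first treat a convex domain: for every $x,y\in A$ one has the integral identity
\begin{equation*}
v_k(x)-v_k(y)=\int_0^1 \nabla v_k(y+t(x-y))\cdot(x-y)\,dt,
\end{equation*}
so that, after integrating in $y\in A$ and dividing by $|A|$, taking absolute values and using $|x-y|\le\diam(A)$, a change of variables $z=y+t(x-y)$ (for which the Jacobian is $t^n$) together with Fubini yields
\begin{equation*}
\int_A |v_k(x)-\langle v_k\rangle_A|\,dx\le \diam(A)\int_A|\nabla v_k|\,dz.
\end{equation*}
Passing to the limit $k\to\infty$ gives the claim with $C=\diam(A)$ in the convex case.

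For a general bounded, connected, Lipschitz domain $A$, the plan is to cover $A$ by a finite family of convex sets (for instance, small balls contained in $A$ together with the local Lipschitz graph representation of $\partial A$) that are pairwise chained in a connected fashion. One then glues the local convex Poincaré inequalities by means of a standard chaining argument: the error in comparing the local averages on overlapping convex pieces is controlled by $\|Dv\|_{TV}$ on the overlap regions, so summing over the (finitely many) pieces yields the global inequality with some constant $C=C(A)$. The main (minor) technical point is keeping track of this constant, but since the lemma is only used qualitatively in the sequel with $C\le\diam(A)$, one may equivalently quote \cite[Theorem 3.44 and Remark 3.45]{AFP}, which give this inequality directly. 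This renders the proof a matter of reference rather than a new computation.
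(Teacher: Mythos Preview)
The paper does not give a proof of this lemma at all: it is stated as a well-known fact, attributed to \cite{CC15} and \cite[Theorem~3.44 and Remark~3.45]{AFP}, and in the only application (Proposition~\ref{prop:lb_p}) it is invoked on one-dimensional intervals, which are convex. So your final suggestion of simply quoting \cite[Theorem~3.44 and Remark~3.45]{AFP} is exactly what the paper does.

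Your sketch for the convex case is the right idea but contains a slip: for fixed $t$ and $x$ the map $y\mapsto z=y+t(x-y)=(1-t)y+tx$ has Jacobian $(1-t)^n$, not $t^n$, and the integral $\int_0^1(1-t)^{-n}\,dt$ diverges, so the argument as written does not directly yield $C=\diam(A)$. One needs either the symmetrized estimate (splitting at $t=1/2$ and changing variables alternately in $x$ and in $y$), or the sharper convex-domain argument underlying \cite[Remark~3.45]{AFP}. Also note that for \emph{general} Lipschitz domains the bound $C\le\diam(A)$ is simply false (a thin U-shaped region is a counterexample), so your chaining argument can only produce some $C=C(A)$; the inequality $C\le\diam(A)$ in the statement should be read as pertaining to the convex case, which is all that is used downstream.
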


Exploiting this together with the ideas from \cite{CC15}, we obtain the following lower bound scaling estimate:

\begin{prop}\label{prop:lb_p}
Let $A,B\in\diag(2,\R)$ be such that $\rank(A-B)=1$, $F=\lambda A+(1-\lambda)B$ for some $0<\lambda<1$ and let $E_\epsilon^{(p)}$ be defined as in \eqref{eq:en_p_min} with $K=\{A,B\}$.
Then, for every $\epsilon>0$ there holds
\begin{equation*}
E_\epsilon^{(p)}(F) \gtrsim \min\{\dist^p(F,K),\dist(F,K)\} \epsilon^\frac{p}{p+1}.
\end{equation*}
\end{prop}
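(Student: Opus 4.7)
The lower-bound argument follows the iterated Poincar\'e-BV localization strategy of \cite{CC15} (which establishes this scaling for $p=2$); its main advantage over Fourier-based alternatives is that the estimates extend to general $p \in [1,\infty)$ without essential modification.

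After rotation and rescaling one may assume that $A, B$ are as in \eqref{eq:AB}, so the rank-one direction is $e_1 \otimes e_1$; writing $F = \mu A + (1-\mu)B$ with $\mu \in (0,1)$, set $\delta := \dist(F,K) \sim \min(\mu, 1-\mu)$. I would then reduce to the scalar quantity $f := \partial_1 u_1$: the rank-one structure gives $\dist^p(\nabla u, K) \ge \dist^p(f, \{A_{11}, B_{11}\})$ and $\|D^2 u\|_{TV} \ge \|Df\|_{TV}$, so the full energy dominates a scalar BV-type energy for $f$. Moreover, the Dirichlet condition $u = Fx + b$ on $\R^2 \setminus \Omega$ combined with the fundamental theorem of calculus on horizontal segments provides the slicewise mean constraint
$$
\int_0^1 f(x_1, x_2)\, dx_1 = F_{11} \quad\text{for a.e.~}x_2 \in (0,1),
$$
with $F_{11}$ at distance $\delta$ from $\{A_{11}, B_{11}\}$, and---by the continuity of tangential traces across $\partial \Omega$---the boundary trace condition $f \equiv F_{11}$ on the horizontal edges $\{x_2 \in \{0,1\}\}$.

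The core estimate is a multi-scale Poincar\'e-BV argument. On any square $Q \subset \Omega$ of side $\ell$, Lemma \ref{lem:poincBV} yields $\int_Q |f - \langle f\rangle_Q|\,dx \le C\ell \|Df\|_{TV(Q)}$, which combined with the reverse triangle inequality for $\dist(\cdot, \{A_{11}, B_{11}\})$ and the elementary convexity bound $(\tau - s)_+^p \ge \tau^p - p \tau^{p-1} s$ (valid for all $\tau, s \ge 0$ and $p \ge 1$) produces the box-wise estimate
$$
\int_Q \dist^p(f, \{A_{11}, B_{11}\})\, dx + C p \tau_Q^{p-1} \ell\, \|Df\|_{TV(Q)} \gtrsim \ell^2 \tau_Q^p,
$$
where $\tau_Q := \dist(\langle f\rangle_Q, \{A_{11}, B_{11}\})$. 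This is combined with the boundary trace $f = F_{11}$ on $\{x_2 = 0,1\}$, which (via BV slicing in $x_2$) forces the box-averages $\langle f\rangle_Q$ in boxes adjacent to the horizontal edges to be close to $F_{11}$, hence $\tau_Q \sim \delta$ there. Summing the box-wise estimates across a tiling of $\Omega$, using the slicewise mean constraint to control row-wise averages of $\langle f\rangle_Q$, and balancing the elastic contribution $\ell^2 \sum_Q \tau_Q^p$ (from boxes near the horizontal boundary or with intermediate-value averages) against the surface contribution $\epsilon \|Df\|_{TV}$ (from phase transitions between adjacent boxes in the bulk) at the scale $\ell \sim \epsilon^{1/(p+1)}$ gives the claimed lower bound of order $\min(\dist^p(F,K),\dist(F,K))\, \epsilon^{p/(p+1)}$.

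The main obstacle is the multi-scale accounting. The box-wise estimate by itself is insensitive to pure-phase configurations (where all $\tau_Q \approx 0$), so the dichotomy ``many intermediate-value averages $\Rightarrow$ elastic cost'' vs.~``adjacent-box phase transitions $\Rightarrow$ surface cost'' must be made simultaneously quantitative, and a single fixed scale is insufficient since energy-minimising configurations branch self-similarly towards the horizontal boundary. It is precisely this balance of elastic and surface mechanisms at the scale $\epsilon^{1/(p+1)}$ — against the background constraint $f \equiv F_{11}$ on $\{x_2 = 0,1\}$ — that produces the exponent $p/(p+1)$ and has to be carried out with care, mirroring the two sub-cases $\dist(F,K) \ge 1$ and $\dist(F,K) < 1$ in the statement.
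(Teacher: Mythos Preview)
Your strategy diverges from the paper's in an important way, and the divergence is exactly where your sketch stops. The paper does \emph{not} tile $\Omega$ by boxes or run a multi-scale dichotomy; it follows \cite{CC15} more literally. One selects by averaging a single thin vertical strip $S=[s,s+\mu]\times[0,1]$ with $E_\epsilon^{(p)}(u;S)\lesssim\mu\, E_\epsilon^{(p)}(u)$, defines $\tilde F(x_2)=\mu^{-1}\int_s^{s+\mu}\nabla u(t,x_2)\,dt$ and $f_1(x_2)$ its nearest value in $\{A_{11},B_{11}\}$, and then works with the \emph{displacement} $u_1$ rather than with $\partial_1 u_1$. Two Poincar\'e inequalities---one in $x_1$ (BV, then standard Wirtinger) giving $\int_S|u_1-f_1(x_2)x_1+a(x_2)|\,dx\lesssim \mu^3\epsilon^{-1}E+\mu^2 E^{1/p}$, and one in $x_2$ from the Dirichlet datum giving $\|u_1-F_{11}x_1\|_{L^1(S)}\le\|\partial_2 u_1\|_{L^1(S)}\lesssim\mu\, E^{1/p}$---are combined by the triangle inequality to yield $\dist(F,K)\,\mu^2\lesssim\mu^3\epsilon^{-1}E+\mu\, E^{1/p}$. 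Optimising $\mu\sim\epsilon^{1/(p+1)}$ then gives the claim directly. A \emph{single} scale suffices, contrary to your assertion; the self-similar branching of minimisers is orthogonal (in $x_1$) to the strip and plays no role in the lower bound.

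The gap in your proposal is that you name the ``multi-scale accounting'' as the main obstacle and then leave it undone. Your box-wise estimate is correct, but the dichotomy you describe (many intermediate averages $\Rightarrow$ elastic cost, versus adjacent-box phase jumps $\Rightarrow$ surface cost) is genuinely nontrivial to close with the right constants and $p$-dependence, and is in any case far more laborious than the paper's route. The idea you are missing is to exploit the Dirichlet condition through $\partial_2 u_1$---which the elastic energy controls directly, since the diagonal wells have $(\cdot)_{12}=0$---rather than through traces of $\partial_1 u_1$ on the horizontal edges. This turns the boundary constraint into an $L^1$ bound on $u_1-F_{11}x_1$ over the whole strip at once, and the phase-transition counting disappears.
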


\begin{proof}
From the translation invariance of the energy we can reduce to consider functions $u=Fx$ in $\R^2\setminus[0,1]^2$ (i.e.~taking $b=0$ in \eqref{eq:en_p_min}).
Let $u\in W^{1,p}_{loc}(\R^2;\R^2)$ be given such that $u(x)=Fx$ on $\R^2\setminus[0,1]^2$. Reasoning as in the proof of \cite[Lemma 3.1]{CC15}, for every $0<\mu<1$ small enough we can find a vertical stripe $S=[s,s+\mu]\times[0,1]\subset[0,1]^2$ such that
\begin{align}\label{eq:loc_p1}
E_\epsilon^{(p)}(u;S) \lesssim \mu E_\epsilon^{(p)}(u),
\end{align}
where $E_\epsilon^{(p)}(u;\cdot)$ is as in \eqref{eq:tot_p_en_loc}.
Consider now
$$
\tilde{F}(x_2):=\frac{1}{\mu}\int_{s}^{s+\mu}\nabla u(t,x_2)dt
$$
and define $f_1:[0,1]\to\R$ as the first entry of a projection of $\tilde{F}$ onto the set $K$, e.g.,

$$
f_1(x_2):=\begin{cases}
A_{1,1} & \text{if } |\tilde F(x_2)-A|\le|\tilde F(x_2)-B|, \\
B_{1,1} & \text{if } |\tilde F(x_2)-B|<|\tilde F(x_2)-A|.
\end{cases}
$$
From the triangle inequality we have
\begin{align*}
\|\p_1 u_1-f_1\|_{L^1(S)} &\le \|\p_1 u_1-\tilde{F}_{1,1}\|_{L^1(S)}+\|\dist(\tilde{F}_{1,1},\{A_{1,1},B_{1,1}\})\|_{L^1(S)} \\
&\le 2\|\p_1 u_1-\tilde{F}_{1,1}\|_{L^1(S)}+\|\dist(\nabla u,K)\|_{L^1(S)}.
\end{align*}
We apply 
Lemma \ref{lem:poincBV} on the right-hand-side above with $\p_1 u_1(\cdot,x_2)-\tilde{F}_{1,1}(x_2)$ in the interval $(s,s+\mu)$ for every $x_2\in(0,1)$.
This yields
\begin{align*}
\|\p_1 u_1-f_1\|_{L^1(S)} \lesssim \mu \|\p_1^2 u_1\|_{TV}(S)+\|\dist(\nabla u,K)\|_{L^1(S)}.
\end{align*}
From \eqref{eq:loc_p1} and H\"older's inequality we obtain
\begin{align*}
\|\p_1 u_1-f_1\|_{L^1(S)} \le \frac{\mu^2}{\epsilon}E_\epsilon^{(p)}(u)+\mu E_\epsilon^{(p)}(u)^\frac{1}{p}.
\end{align*}
The (standard) Poincar\'e-Wirtinger inequality in the $x_1$ variable implies
\begin{equation}\label{eq:imp1}
\int_{S}|u_1(x_1,x_2)-f_1(x_2)x_1+a(x_2)|dx_1dx_2 \le \frac{\mu^3}{\epsilon}E_\epsilon^{(p)}(u)+\mu^2 E_\epsilon^{(p)}(u)^\frac{1}{p}
\end{equation}
for some function $a$ depending only on $x_2$.
Moreover, (standard) Poincar\'e's inequality in the $x_1$ variable, the diagonal structure of the matrices in $K$ and \eqref{eq:loc_p1} give
\begin{equation}\label{eq:imp2}
\|u_1(x)-F_{1,1}x_1\|_{L^1(S)}\le\|\p_2 u_1\|_{L^1(S)}\le \mu E_\epsilon^{(p)}(u)^\frac{1}{p}.
\end{equation}
Notice that in the application of Poincar\'e inequality above we used that $u(x_1,0)-F_{1,1}x_1\equiv 0$.
Combining \eqref{eq:imp1} and \eqref{eq:imp2}, the triangle inequality yields
\begin{align*}
\int_{S}\big|(F_{1,1}-f_1(x_2))x_1+a(x_2)\big|dx_1dx_2 &\le \frac{\mu^3}{\epsilon}E_\epsilon^{(p)}(u)+(\mu+\mu^2) E_\epsilon^{(p)}(u)^\frac{1}{p} \\
& \lesssim \frac{\mu^3}{\epsilon}E_\epsilon^{(p)}(u)+\mu E_\epsilon^{(p)}(u)^\frac{1}{p}.
\end{align*}
Recalling the definition of $f_1$, we notice that by linearity in $x_1$ the left-hand-side above is bounded from below by $\dist(F,K)\mu^2$; hence
$$
\dist(F,K)\mu^2\lesssim \frac{\mu^3}{\epsilon}E_\epsilon^{(p)}(u)+\mu E_\epsilon^{(p)}(u)^\frac{1}{p}.
$$
From this we obtain that
$$
\dist(F,K)\mu^2\lesssim\max\Big\{\frac{\mu^3}{\epsilon}E_\epsilon^{(p)}(u),\mu E_\epsilon^{(p)}(u)^\frac{1}{p}\Big\},
$$
which in particular yields that either $E_\epsilon^{(p)}(u)\gtrsim\frac{\epsilon}{\mu}\dist(F,K)$ or $E_\epsilon^{(p)}(u)\gtrsim\mu^p\dist^p(F,K)$ and therefore
$$
E_\epsilon^{(p)}(u)\gtrsim\min\Big\{\dist(F,K)\frac{\epsilon}{\mu},\dist^p(F,K)\mu^p\Big\}\gtrsim \min\{\dist(F,K),\dist^p(F,K)\}\epsilon^\frac{p}{p+1},
$$
where the last step comes from the optimization $\mu\sim\epsilon^\frac{1}{p+1}$.
\end{proof}

Eventually, the combination of Propositions \ref{prop:first-order} and \ref{prop:lb_p} conclude the proof of Theorem \ref{thm:p_dependence_two_wells}.

\section{Preliminaries for the $L^2$-Based Lower Bound Estimates for Higher Order Laminates}

\label{sec:prelim_lower}

Heading towards the proofs of Theorems \ref{thm:K3}-\ref{thm:Kn}, in this section, we present some auxiliary results which will be used in the derivation of the lower bounds. Many of the arguments follow similar ideas as in \cite{RT21} and \cite{KW16} with only smaller changes necessary due to the $n$-dimensional setting in this article. They strongly rely on $L^2$-based Fourier techniques.

\subsection{The elastic energy and its Fourier multiplier} 

We begin by deducing the multiplier formulation of the elastic energy for which we work with periodic extensions of the function $v:= u-Fx$.

\begin{lem}[Fourier characterization of the elastic energy]
\label{lem:Fourier}
Let $\Omega =(0,1)^n \subset \R^n$.
Let $F\in \diag(\R,n)$ and let $\chi_j\in \{0,1\}$, $j\in\{1,\dots,n+1\}$, denote characteristic functions such that $\chi_j \in BV(\Omega,\{0,1\})$, and $\sum\limits_{j=1}^{n+1}\chi_j = 1$. Let further 
\begin{align}
\label{eq:energy_aux_sec}
E_{el,n+1}(\chi,F):= \inf\limits_{u: \ \nabla u = F \text{ in } \R^n\setminus \overline{\Omega}} \int\limits_{\Omega}|\nabla u - \chi|^2 dx,
\end{align}
where $\chi= \sum\limits_{j=1}^{n+1} \chi_j A_j$ and define $\tilde{\chi}:= \sum\limits_{j=1}^{n+1} \chi_j (A_j-F)= \chi - F: \Omega \rightarrow \R^{n\times n}$.
Then, identifying $\tilde{\chi}$ with a function on the torus $\T^n$, we obtain
\begin{align*}
E_{el,n+1}(\chi,F) \geq \sum\limits_{j=1}^{n}  \sum\limits_{\ell\neq j}\sum\limits_{k\in\Z^n}  \frac{k_{\ell}^2}{|k|^2} |\mathcal{F}{\tilde{\chi}}_{j,j}(k)|^2 ,
\end{align*}
with the convention that the multiplier is equal to one for $k=0$.
\end{lem}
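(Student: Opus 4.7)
The plan is to transform the minimisation so that Fourier analysis on $\T^n$ applies directly. Given an admissible $u$, I would set $v := u - Fx$, so that $\nabla v \equiv 0$ on $\R^n \setminus \overline\Omega$; hence $v \equiv b$ on $\R^n \setminus \overline\Omega$ for some $b \in \R^n$ and, replacing $v$ by $v - b$, I may assume $v \in W^{1,2}_0(\Omega;\R^n)$. Identifying $\Omega = (0,1)^n$ with $\T^n$ (they differ by a null set), the vanishing boundary trace makes the zero extension of $v$ an element of $W^{1,2}(\T^n;\R^n)$, and
\begin{align*}
\int_\Omega |\nabla u - \chi|^2 \, dx = \int_{\T^n}|\nabla v - \tilde\chi|^2 \, dx,
\end{align*}
where $\tilde\chi := \chi - F$ is understood via its periodic extension. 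Since this extension embeds $W^{1,2}_0(\Omega;\R^n)$ into $W^{1,2}(\T^n;\R^n)$, enlarging the admissible class yields
\begin{align*}
E_{el,n+1}(\chi, F) \ge \inf_{w \in W^{1,2}(\T^n;\R^n)} \int_{\T^n} |\nabla w - \tilde\chi|^2 \, dx.
\end{align*}

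\textbf{Row-wise decoupling and Fourier diagonalisation.} Because $F$ and each $A_j$ are diagonal, $\tilde\chi$ is a diagonal matrix field whose $j$-th row is $\tilde\chi_{j,j}\, e_j$. The integrand therefore splits as $\sum_{j=1}^n |\nabla w_j - \tilde\chi_{j,j}\, e_j|^2$, decoupling into $n$ independent scalar problems. Applying Parseval's identity on $\T^n$ together with $\widehat{\partial_\ell w_j}(k) = 2\pi i\, k_\ell\, \hat w_j(k)$, each such contribution becomes
\begin{align*}
\int_{\T^n}|\nabla w_j - \tilde\chi_{j,j}\, e_j|^2 \, dx = \sum_{k \in \Z^n} \bigl| 2\pi i\, k\, \hat w_j(k) - \F\tilde\chi_{j,j}(k)\, e_j \bigr|^2.
\end{align*}

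\textbf{Pointwise-in-$k$ minimisation.} For $k \in \Z^n\setminus\{0\}$ each summand is a quadratic in the single complex variable $\hat w_j(k)$, and the minimiser corresponds to the orthogonal projection of $\F\tilde\chi_{j,j}(k)\, e_j$ onto the hyperplane perpendicular to $k$: the optimal choice $\hat w_j(k) = \frac{k_j}{2\pi i\,|k|^2}\F\tilde\chi_{j,j}(k)$ yields the minimum value
\begin{align*}
|\F\tilde\chi_{j,j}(k)|^2 \Bigl(1 - \frac{k_j^2}{|k|^2}\Bigr) = |\F\tilde\chi_{j,j}(k)|^2 \sum_{\ell\neq j} \frac{k_\ell^2}{|k|^2}.
\end{align*}
For $k = 0$ the Fourier mode of $\nabla w_j$ is forced to vanish, so the contribution reduces to $|\F\tilde\chi_{j,j}(0)|^2$, which matches the stated convention that the multiplier equals $1$ at the origin. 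Summing over $j \in \{1,\dots,n\}$ and $k \in \Z^n$ gives the claimed inequality.

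\textbf{Main obstacle.} The only delicate step is the reduction: one must check that zero extension preserves $W^{1,2}$ regularity across $\partial\Omega$ and that passing from $W^{1,2}_0(\Omega)$ to $W^{1,2}(\T^n)$ only decreases the infimum, thereby giving a genuine lower bound. Once this is secured, the proof reduces to the standard pointwise orthogonal projection in Fourier space, which is made transparent by the diagonal structure of $\tilde\chi$.
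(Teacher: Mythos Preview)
Your proposal is correct and follows essentially the same route as the paper: reduce to $v=u-Fx\in W^{1,2}_0(\Omega;\R^n)$, relax to periodic competitors on $\T^n$ to obtain a lower bound, apply Parseval, and minimise mode-by-mode using the diagonal structure of $\tilde\chi$. The only cosmetic differences are that the paper writes the minimisation via the Euler--Lagrange equation $2\pi(\F v\otimes ik)k=\F\tilde\chi\,k$ rather than as an explicit orthogonal projection, and imposes the harmless normalisation $\langle v\rangle=0$ (which does not affect the infimum since the $k=0$ mode of $\nabla v$ vanishes regardless).
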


\begin{proof}
We first note that in what follows, we view $\tilde{\chi}$ as a function on $\T^n$. Next, we observe that by the assumptions for the characteristic functions $\chi_j$ it holds that
\begin{align*}
E_{el,n+1}(\chi,F)&:= \inf\limits_{u \in W^{1,2}_{loc}(\R^n, \R^n): \ \nabla u = F \text{ in } \R^n\setminus \overline{\Omega}} \int\limits_{\Omega}|\nabla u - \chi|^2 dx\\
&= \inf\limits_{v \in W^{1,2}_{loc}(\R^n, \R^n): \ \nabla v = 0 \text{ in } \R^n\setminus \overline{\Omega}} \int\limits_{\Omega}|\nabla v - \tilde{\chi}|^2 dx\\
&= \inf\limits_{v\in W^{1,2}_0(\Omega, \R^n)} \int\limits_{\Omega}|\nabla v - \tilde{\chi}|^2 dx\\
&\geq \inf\limits_{v\in W^{1,2}(\T^n, \R^n): \ \langle v\rangle = 0 } \int\limits_{\T^n}|\nabla v - \tilde{\chi}|^2 dx.
\end{align*}
Here, in the last inequality, we have viewed $W^{1,2}_0(\Omega,\R^n)$ functions as a subset of the periodic functions and have changed the integration to an integration over $\T^n$.
Hence, an application of the Fourier transform yields that 
\begin{align*}
 \int\limits_{\T^n}|\nabla v - \tilde{\chi}|^2 dx = \sum\limits_{k\in\Z^n}|2\pi \F{v}\otimes ik - \F{\tilde{\chi}}|^2 .
\end{align*}
Minimizing this leads to the Euler Lagrange equation
\begin{align*}
2\pi(\F v \otimes ik)k = \F \tilde{\chi} k \mbox{ for all } k \in \R^n \setminus \{0\}.
\end{align*}
Thus, by the diagonal structure of the matrices $\tilde{\chi}$, we obtain $\F v_j = - \frac{i}{2\pi} \frac{k_j}{|k|^2} \F \tilde{\chi}_{j,j}$ for $j\in\{1,\dots,n\}$  and $k\neq 0$. Inserting this into the expression for $E_{el,n+1}$ and taking into account the case $k=0$ separately, implies the desired result. We note that the above function $v$ yields a minimizer for the periodic elastic energy given a fixed matrix-valued function $\chi$.
\end{proof}

\subsection{Low frequency bounds}
As a direct consequence of the identity relating $v$ and $\chi$, we observe the following estimate which we will use in controlling low frequencies.

\begin{lem}
\label{lem:axes_estimates}
Let $\Omega = (0,1)^n$ and let 
$\tilde{\chi}_{j,j}$ be as in Lemma \ref{lem:Fourier} (viewed as a function on $\T^n$). Then, for every $\mu\in(0,1)$ and for every $j\in \{1,\dots,n\}$ there holds
\begin{align*}
\sum\limits_{\{k_j \in \Z: \ |k_j|\leq \mu^{-1}\}}  |\F \tilde{\chi}_{j,j}(0,\dots,0,k_j,0,\dots,0)|^2 \leq C \mu^{-2} E_{el,n+1}(\chi,F).
\end{align*}
\end{lem}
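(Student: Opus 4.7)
The plan is to reduce the claim to a one-dimensional statement in the $x_j$-variable by averaging, and then to exploit the Dirichlet data via the Poincar\'e inequality applied in an \emph{off-diagonal} direction. First, I would pick a near-optimal competitor $v\in W^{1,2}_0(\Omega;\R^n)$ for the infimum defining $E_{el,n+1}(\chi,F)$, so that $\int_\Omega|\nabla v-\tilde\chi|^2\le E_{el,n+1}(\chi,F)+\eta$ for arbitrary $\eta>0$. Next, for fixed $j\in\{1,\dots,n\}$ I would introduce the one-dimensional averaged functions
\[
\bar v_j(x_j):=\int_{[0,1]^{n-1}} v_j\,dx_{\hat j},\qquad \bar\chi_{j,j}(x_j):=\int_{[0,1]^{n-1}}\tilde\chi_{j,j}\,dx_{\hat j},
\]
viewed as functions on $\T=[0,1]$. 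Fubini and the boundary condition $v_j|_{\p\Omega}=0$ give $\bar v_j(0)=\bar v_j(1)=0$, so $\bar v_j$ extends continuously and periodically on $\T$; Jensen's inequality moreover yields $\|\bar v_j^{\,\prime}-\bar\chi_{j,j}\|_{L^2(0,1)}^2\le E_{el,n+1}(\chi,F)+\eta$. A direct computation shows that $\F\tilde\chi_{j,j}(0,\dots,0,k_j,0,\dots,0)=\widehat{\bar\chi_{j,j}}(k_j)$ in the sense of the $1$D Fourier transform on $\T$, and setting $g:=\bar v_j^{\,\prime}-\bar\chi_{j,j}$ the one-dimensional Fourier identity
\[
\widehat{\bar\chi_{j,j}}(k_j)=2\pi i k_j\,\widehat{\bar v_j}(k_j)-\hat g(k_j)
\]
holds for every $k_j\in\Z$.

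The crucial step is the control of $\|\bar v_j\|_{L^2(0,1)}$ in terms of $E_{el,n+1}(\chi,F)$. Since $\tilde\chi$ is \emph{diagonal}, for any $l\neq j$ the $(j,l)$-entry of $\nabla v-\tilde\chi$ equals $\p_l v_j$, and hence $\|\p_l v_j\|_{L^2(\Omega)}^2\le E_{el,n+1}(\chi,F)+\eta$. Because $v_j$ vanishes on the faces $\{x_l=0\}$ and $\{x_l=1\}$, Poincar\'e's inequality in the $x_l$-direction then yields $\|v_j\|_{L^2(\Omega)}^2\le C\|\p_l v_j\|_{L^2(\Omega)}^2\le C(E_{el,n+1}(\chi,F)+\eta)$, and Jensen/Fubini propagate this to $\|\bar v_j\|_{L^2(0,1)}^2\le C(E_{el,n+1}(\chi,F)+\eta)$. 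This is the key insight: off-diagonally the energy controls $v_j$ itself (not just its gradient), turning a bound of order $1$ into a bound of order $E_{el,n+1}$.

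The remainder is routine. Splitting $|\widehat{\bar\chi_{j,j}}(k_j)|^2\le 2|2\pi k_j\widehat{\bar v_j}(k_j)|^2+2|\hat g(k_j)|^2$, estimating $|k_j|\le\mu^{-1}$ to pull out a factor $8\pi^2\mu^{-2}$, summing, and invoking Parseval together with $\|\bar v_j\|_{L^2}^2\lesssim E_{el,n+1}(\chi,F)+\eta$ and $\|g\|_{L^2}^2\le E_{el,n+1}(\chi,F)+\eta$, I would arrive at
\[
\sum_{|k_j|\le\mu^{-1}}|\widehat{\bar\chi_{j,j}}(k_j)|^2 \lesssim \mu^{-2}\bigl(E_{el,n+1}(\chi,F)+\eta\bigr)+\bigl(E_{el,n+1}(\chi,F)+\eta\bigr).
\]
Absorbing the lower-order term into the first using $\mu\in(0,1)$ and sending $\eta\to 0$ then proves the claim. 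The only delicate ingredient is the Poincar\'e step above: its validity rests entirely on the vanishing of $v_j$ on a face transverse to $e_l$ for some $l\neq j$, which in turn relies on the diagonal structure of $\tilde\chi$ and the use of an index different from $j$.
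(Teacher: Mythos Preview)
Your proposal is correct and follows essentially the same approach as the paper: both arguments average out the transverse variables to reduce to a one-dimensional problem, control $\|\bar v_j\|_{L^2}$ by exploiting that $\p_l v_j$ (for $l\neq j$) is an off-diagonal entry of $\nabla v-\tilde\chi$ together with the vanishing boundary data (the paper phrases this via the fundamental theorem of calculus rather than Poincar\'e, but the content is identical), and then split $\F\tilde\chi_{j,j}$ using $\F(\p_j v_j)$ to pull out the factor $\mu^{-2}$.
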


\begin{rmk}
In the result above the boundary conditions play a very important role.
This becomes clear when looking at the stress-free case.
Indeed, in case of affine boundary conditions, purely one-dimensional solutions must be constants, whereas in the presence of periodic conditions we have nontrivial laminate solutions.

With this point of view, one can see that in order to rule out (nontrivial) one-dimensional solutions, it is sufficient to consider affine data only on two (couples of) facets of $\p\Omega$ that are orthogonal to two independent directions, see also Remark \ref{rmk:mixed-BC} below.

We emphasize that the proof of this lemma fails in general for the ``periodic setting'' in which the elastic energy $E_{el}(u,\chi)$ is minimized among $u\in W^{1,2}_{loc}(\R^n;\R^n)$ such that $\nabla u$ is periodic with $\langle \nabla u \rangle = F$. This however is not only a technical artifact of the method of proof; in fact, in the periodic case, a different (shifted) scaling behaviour is expected. We refer to Section \ref{subsec:periodic3grad} for an example of such a result.
\end{rmk}

\begin{proof}

Let $v\in W^{1,2}_0(\Omega;\R^n)$.
We first estimate $\F v_j$. Assuming first that $j=1$, Plancherel's formula, the vanishing Dirichlet conditions and the fundamental theorem of calculus give
\begin{align*}
\sum\limits_{|k_1| \leq \mu^{-1}} |\F v_1(k_1,0,\dots,0)|^2
&\leq \sum\limits_{k_1 \in \Z} |\F v_1(k_1,0,\dots,0)|^2 \\
&= \int_0^1\Big( \int_{[0,1]^{n-1}}  v_1(x_1,x_2,\dots,x_n) dx_2\dots dx_n  \Big)^2  dx_1\\
&= \int_0^1\Big( \int_{[0,1]^{n-1}}\int_0^{x_2} \p_2 v_1(x_1,s,x_3,\dots,x_n) ds dx_2 \dots dx_n \Big)^2  dx_1 \\
& \le \int_0^1\Big( \int_{[0,1]^{n-1}} |\p_2 v_1(x_1,s,x_3,\dots,x_n)| ds dx_3 \dots dx_n \Big)^2  dx_1.
\end{align*}
Next, we specify $v \in W^{1,2}_0(\Omega, \R^n)$ to be such that $\int\limits_{\Omega}|\nabla v - \tilde{\chi}|^2 dx \leq 2 E_{el,n+1}(\chi,F)$.
This is always possible since $E_{el,n+1}(\chi,F)=0$ if and only if $\chi\equiv F$, as a consequence of Lemma \ref{lem:Fourier}.
With this specification, we obtain by
Jensen's inequality and the diagonal structure of $\tilde\chi$
\begin{align}
\label{eq:v11}
\sum\limits_{|k_1| \leq \mu^{-1}} |\F v_1(k_1,0,\dots,0)|^2 \leq \|\p_2 v_1\|_{L^2(\T^2)}^2 \leq 2 E_{el,n+1}(\chi,F).
\end{align}
Moreover,
\begin{align*}
\sum\limits_{|k_1|\leq \mu^{-1}} |\F \tilde{\chi}_{1,1}(k_1,0,\dots,0)|^2 
& \leq 2\sum\limits_{|k_1|\leq \mu^{-1}} |\F(\p_1 v_1) (k_1,0,\dots,0)- \F \tilde{\chi}_{1,1}(k_1,0,\dots,0)|^2 \\
& \quad + 2\sum\limits_{|k_1|\leq \mu^{-1}} |\F(\p_1 v_1)(k_1,0,\dots,0) |^2 \\
& \leq 4 E_{el,n+1}(\chi,F) + 8\pi^2 \mu^{-2} \sum\limits_{|k_1| \leq \mu^{-1}} |\F v_1(k_1,0,\dots,0)|^2\\
& \leq C \mu^{-2} E_{el,n+1}(\chi,F).
\end{align*}
In the last step, we have used the estimate \eqref{eq:v11}.

Let now $j>1$. With an analogous reasoning we obtain
\begin{multline*}
\sum_{|k_j|<\mu^{-1}}|\F v_j(0,\dots,0,k_j,0,\dots,0)|^2 \\
\le \int_0^1\Big(\int_{[0,1]^{n-1}}|\p_1 v_j(s,x_2,\dots,x_n)|ds dx_2\dots dx_{j-1} dx_{j+1}\dots dx_n\Big)^2dx_j.
\end{multline*}

As in the case $j=1$, specifying $v \in W^{1,2}_0(\Omega,\R^n)$ to be such that $\int\limits_{\Omega}|\nabla v - \tilde{\chi}|^2 dx \leq 2 E_{el,n+1}(\chi,F)$ then yields
\begin{align*}
\sum\limits_{\{|k_j| \leq\mu^{-1}\}} |\F v_j(0,\dots,k_j,0,\dots,0)|^2 \leq \|\p_1 v_j\|_{L^2(\T^2)}^2 \leq C E_{el,n+1}(\chi,F).
\end{align*}
Combining the above bounds concludes the proof.
\end{proof}

\begin{rmk}\label{rmk:mixed-BC}
We highlight that, in the proof of Lemma \ref{lem:axes_estimates}, the homogeneous boundary condition on $v$ can be relaxed, requiring $v$ to be periodic and vanishing only on two couples of (opposite) facets of the cube $\Omega$.
Namely, given $j',j''\in\{1,\dots,n\}$, $j'\neq j''$ we let $v:\T^n\to\R$ be such that
$$v(x_1,\dots,x_{h-1},0,x_{h+1},\dots,x_n)=0 =v(x_1,\dots,x_{h-1},1,x_{h+1},\dots,x_n) \text{ with } h=j',j''.$$
Indeed, in applying the fundamental theorem of calculus in the proof, we have only used that $v_1(x_1,0,x_3,\dots,x_n)=0=v_1(x_1,1,x_3,\dots,x_n)$ for all $x_1,x_3,\dots,x_n \in (0,1)$ when $j=1$, and that $v_j(0,x_2,\dots,x_n)=0=v_j(1,x_2,\dots,x_n)$ for all $x_2,\dots,x_n \in (0,1)$ when $j>1$.

In particular, this means that the result of Lemma \ref{lem:axes_estimates} still holds for elastic energies
$$
E_{el,n+1}(\chi,F):=\inf\Big\{\int_\Omega|\nabla u-\chi|^2dx \,:\, \nabla u\in\T^n,\, \langle\nabla u\rangle=F,\, u=Fx+b \text{ on } \Gamma_{j'}\cup\Gamma_{j''}\Big\}
$$
where $\Gamma_{j}=\{x\in\p\Omega\,:\, x_j=0,1\}$.

For such boundary conditions one can find constructions (simpler than those shown in Sections \ref{sec:proof4_wells_lower} and \ref{sec:low_n}) matching the upper bounds of Theorem \ref{thm:Kn}.
These are the most general boundary conditions that ``force'' the highest order of lamination possible, see also the discussions in 
Section \ref{subsec:periodic3grad}.
\end{rmk}

\subsection{High frequency estimates}

Next, we recall the phase-space bounds resulting from the interplay between the Fourier multiplier for the elastic energy and the surface energy. This leads to low frequency localization results in certain truncated cones. The argument for this follows along the lines of \cite{RT21} but is formulated for $n$ dimensions here. 

For arbitrary but fixed $\mu,\mu_2>0$, we start by introducing the notation for truncated cones which we will be using in the sequel:
\begin{align}\label{eq:cones-def}
C_{j,\mu, \mu_2}:= \{k\in \Z^n: \ \sum\limits_{\ell \neq j}|k_{\ell}|^2 \leq \mu^2 |k|^2, \ |k|\leq \mu_2\}, \ j \in \{1,\dots,n\}.
\end{align}
Corresponding to these truncated cones, we also assign associated Fourier multipliers $\chi_{j,\mu,\mu_2}(D)$. These are determined by non-negative $C^{\infty}(\R^n)$ functions such that $\chi_{j,\mu,\mu_2}(k)=1$ for $k\in C_{j,\mu,\mu_2}$ and $\chi_{j,\mu,\mu_2}(k)=0$ for $k\in\Z^n\setminus C_{j,2\mu,2\mu_2}$. For $|k| \leq \mu_2 $ they are chosen to be essentially zero-homogeneous (due to technical reasons in the regularity conditions required for applying the transference principle, a small neighbourhood of the zero frequency has to be treated separately). A possible explicit choice would for instance be
$\chi_{j,\mu,\mu_2}(k)=(1-\psi(k))\varphi(\frac{k_j}{\mu|k|})\varphi(\frac{|k|}{\mu_2})+\psi(k)$ where $\varphi\in C^\infty(\R)$ is a positive function which equals $1$ on $[-1,1]$ and vanishes outside $[-2,2]$ and $\psi\in C^\infty(\R^n)$ which equals $1$ on $B_\frac{1}{2}$ and vanishes on $\R^n\setminus B_1$. The multipier $\chi_{j,\mu,\mu_2}(D)$ is then defined as in \eqref{eq:mult}.
In this article we will almost exclusively restrict to the choice $\mu= \epsilon^{\alpha}$ for some $\alpha \in (0,1)$ which will be determined in the respective settings below. The role of $\mu_2$ will vary depending on the iteration step in our argument.

Further, we recall the notation $\tilde{\chi}_{j,j}:= \chi_{j,j}-F_{j,j}$ from \eqref{eq:char_func_norm}, where the functions $\chi_{j,j}$ denote the components of the diagonal matrix ${\chi}$ from above and where $F\in \diag(\R,n)$.

\begin{lem}[A first localization result to cones]
\label{lem:first_loc}
Let $\mu\in(0,1), \mu_2>0$ and let $E_{el,n+1}(\cdot,\cdot)$  be as in \eqref{eq:energy_aux_sec} and $E_{surf}(\chi):= \sum\limits_{j=1}^{n+1}\|D\chi_j\|_{TV}(\Omega)$. Then,
\begin{align*}
\sum\limits_{j=1}^{n}\|\F \tilde\chi_{j,j}- \chi_{j,\mu,\mu_2}(D)\F \tilde \chi_{j,j}\|_{L^2}^2 \lesssim \mu_2^{-1} (E_{surf}(\chi) + \Per(\Omega)) + \mu^{-2}E_{el,n+1}(\chi,F) .
\end{align*}
\end{lem}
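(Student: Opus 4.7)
The plan is to split the support of $1-\chi_{j,\mu,\mu_2}(k)$ into two regions, namely a cone-violating region where the angular condition fails and a high-frequency region $\{|k|>\mu_2\}$, and to estimate the two contributions separately: the former through the Fourier representation of the elastic energy from Lemma~\ref{lem:Fourier}, and the latter through an $L^1$--$L^\infty$ trade-off combined with the classical BV-mollification estimate, thus avoiding any appeal to $\dot H^{1/2}$ (into which $BV\cap L^\infty$ does not embed on $\T$).

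First, since $\chi_{j,\mu,\mu_2}\equiv 1$ on $C_{j,\mu,\mu_2}$ and $|1-\chi_{j,\mu,\mu_2}|\le 1$, I would use Plancherel to bound the left-hand side by $\sum_j\sum_{k\notin C_{j,\mu,\mu_2}}|\F\tilde\chi_{j,j}(k)|^2$, and observe that $\Z^n\setminus C_{j,\mu,\mu_2}\subset A_j\cup B$ where $A_j:=\{k\,:\,\sum_{\ell\neq j}k_\ell^2>\mu^2|k|^2\}$ and $B:=\{|k|>\mu_2\}$. On $A_j$, the pointwise inequality $|\F\tilde\chi_{j,j}(k)|^2\le\mu^{-2}\sum_{\ell\neq j}\frac{k_\ell^2}{|k|^2}|\F\tilde\chi_{j,j}(k)|^2$ together with Lemma~\ref{lem:Fourier} (after summation in $j$) yields a contribution bounded by $\mu^{-2}E_{el,n+1}(\chi,F)$.

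For the high-frequency piece, I would introduce a smooth cutoff $\rho\in C_c^\infty(\R^n)$ with $\rho\equiv 1$ on $B_{1/2}$ and $\rho\equiv 0$ outside $B_1$, and let $\psi_{\mu_2}$ denote the periodic kernel whose Fourier coefficients are $\rho(k/\mu_2)$. Since $\rho(k/\mu_2)=0$ whenever $|k|\ge\mu_2$, Plancherel gives
\[
\sum_{|k|>\mu_2}|\F\tilde\chi_{j,j}(k)|^2 \le \|\tilde\chi_{j,j}-\psi_{\mu_2}*\tilde\chi_{j,j}\|_{L^2(\T^n)}^2 \le \|\tilde\chi_{j,j}-\psi_{\mu_2}*\tilde\chi_{j,j}\|_{L^1(\T^n)}\cdot\|\tilde\chi_{j,j}-\psi_{\mu_2}*\tilde\chi_{j,j}\|_{L^\infty(\T^n)}.
\]
The $L^\infty$ factor is uniformly bounded in $\mu_2$ by $(1+\|\F^{-1}\rho\|_{L^1(\R^n)})\|\tilde\chi_{j,j}\|_{L^\infty}\lesssim 1$ via Young's inequality and Poisson summation; the $L^1$ factor is the classical BV-mollification estimate $\|\tilde\chi_{j,j}-\psi_{\mu_2}*\tilde\chi_{j,j}\|_{L^1}\lesssim\mu_2^{-1}\|D\tilde\chi_{j,j}\|_{TV(\T^n)}$, which follows from $\int\psi_{\mu_2}=1$ together with the standard translation estimate $\|f(\cdot)-f(\cdot-y)\|_{L^1}\le|y|\|Df\|_{TV}$ and $\int|y||\psi_{\mu_2}(y)|dy\lesssim\mu_2^{-1}$. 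The periodic BV norm finally splits as $\|D\tilde\chi_{j,j}\|_{TV(\T^n)}\le\|D\tilde\chi_{j,j}\|_{TV(\Omega)}+C\|\tilde\chi_{j,j}\|_{L^\infty}\Per(\Omega)\lesssim E_{surf}(\chi)+\Per(\Omega)$, with the second term absorbing the jumps of the periodic extension across $\p\Omega$.

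Summing in $j\in\{1,\dots,n\}$ and combining the $A_j$ and $B$ estimates yields the claim. The main subtlety is the high-frequency step: the naive estimate $\sum_{|k|>\mu_2}|\F f|^2\le\mu_2^{-1}\|f\|_{\dot H^{1/2}}^2$ is unavailable for general $BV\cap L^\infty$ functions on $\T^n$ (already the indicator of an interval on $\T$ has infinite $\dot H^{1/2}$ seminorm), so the $L^2$-to-$L^1\cdot L^\infty$ trade-off against the smooth cutoff $\psi_{\mu_2}$ is essential and cannot be replaced by an unweighted dyadic frequency decomposition.
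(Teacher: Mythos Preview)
Your proof is correct and essentially matches the paper's: the conical estimate is identical, and the high-frequency estimate rests on the same BV translation bound combined with $L^\infty$ boundedness. The paper's only variation is to exploit $\chi_j\in\{0,1\}$ (so that $|\chi_j-\chi_j(\cdot+c)|^2=|\chi_j-\chi_j(\cdot+c)|$) with a single shift and then average over $c\in\partial B_{\mu_2^{-1}}$, rather than mollify and invoke the $L^2\le L^1\cdot L^\infty$ trade-off as you do; this is purely a matter of packaging.
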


\begin{proof}
We argue in two steps: First, using the surface energy, we show that high frequency contribution can be cut off. Secondly, using the multiplier for the elastic energy, we show that it suffices to restrict to certain cones.

\emph{Step 1: High-frequency cut-off.} We argue as in the proof of \cite[Lemma 4.3]{KKO13}. Using that $|\chi_j| \leq 1$, we obtain that for any $c\in \R^n$ it holds that
\begin{align*}
\Per(\Omega)+\|\nabla \chi_{j}\|_{TV((0,1)^n)} &\geq \|\nabla \chi_j\|_{TV(\T^n)} \\
&\geq \frac{1}{|c|} \int\limits_{\T^n} |\chi_j - \chi_j(\cdot + c)|^2 dx \geq \frac{1}{|c|} \sum\limits_{k\in \Z^n} |(1-e^{ic\cdot k}) \F \chi_{j}|^2.
\end{align*}
Here, as above, we have viewed $\chi_j$ as a function on $\T^n$ after extending it periodically.
Integrating over $\partial B_{|c|}$ we hence deduce that
\begin{align*}
|c|^2 \|\nabla \chi_{j}\|_{TV(\T^n)}  \geq C |c| \sum\limits_{\{k \in \Z^n: \ |k|\geq \frac{1}{|c|}\}}|\F \chi_{j}|^2 . 
\end{align*}
Choosing $|c|=\mu_2^{-1}$ and summing over $j\in \{1,\dots, n+1\}$ yields the following bound
\begin{align}
\label{eq:bd1high}
C'\sum_{j=1}^n\sum_{\{|k|\ge\mu_2\}}|\F \chi_{j,j}|^2\le\sum\limits_{j=1}^{n+1} \sum\limits_{\{ |k|\geq \mu_2 \}} |\F \chi_j|^2 dk \leq C \mu_2^{-1}(E_{surf}(\chi) + \Per(\Omega)).
\end{align}

\emph{Step 2: Conical cut-off.}
Using the multiplier from Lemma \ref{lem:Fourier} in combination with the definition of $\tilde{\chi}_{j,j}$, we obtain
\begin{align}
\label{eq:bd2high}
\begin{split}
E_{el,n+1}(\chi,F)
&= \sum\limits_{j=1}^{n}  \sum\limits_{\ell\neq j}\sum\limits_{k\in \Z^n}  \frac{k_{\ell}^2}{|k|^2} |\mathcal{F}{\tilde{\chi}}_{j,j}(k)|^2 
\geq \mu^2 \sum\limits_{j=1}^{n}  \sum\limits_{\{\sum_{\ell\neq j}|k_\ell|^2>\mu^2|k|^2\}}  |\mathcal{F}{\tilde{\chi}}_{j,j}(k)|^2 
\end{split}
\end{align}

Combining the two cut-off bounds from \eqref{eq:bd1high}, \eqref{eq:bd2high} by a triangle inequality argument then yields the claim.
\end{proof}

Next, we formulate the following nonlinear commutation result:

\begin{lem}[A commutation result]
\label{lem:comm}
Let $g$ be a polynomial of degree $d\in \N \cup \{0\}$. Let $\lambda_j \in \R$ and assume that for some $\ell\in \{1,\dots,n\}$
\begin{align*}
\tilde{\chi}_{\ell, \ell} = g\left( \sum\limits_{j\neq \ell}\lambda_j \tilde{\chi}_{j,j} \right).
\end{align*}
Then, for any $\gamma \in (0,1)$ there exists a constant $C>0$ depending on $\gamma,g,F$ such that
\begin{align*}
&\|g(\sum\limits_{j\neq \ell} \lambda_j {\chi}_{j,\mu,\mu_2}(D)\tilde\chi_{j,j}) - \chi_{\ell, \mu, \mu_2}(D) g(\sum\limits_{j\neq \ell} \lambda_j \tilde \chi_{j,j})\|_{L^2} \\
&\leq C\sum\limits_{j\neq \ell} \psi_{d}\big(|\lambda_j|\big)  \psi_{1-\gamma}\big(\|\tilde\chi_{j,j}-\chi_{j,\mu,\mu_2}(D)\tilde\chi_{j,j}\|_{L^2}\big)+\|\tilde\chi_{\ell \ell}-\chi_{\ell,\mu,\mu_2}(D)\tilde \chi_{\ell \ell}\|_{L^2},
\end{align*}
where $\psi_\beta(z)=\max\{|z|,|z|^\beta\}$ for $\beta>0$.
\end{lem}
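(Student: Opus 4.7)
The plan is to set $X:=\sum_{j\neq\ell}\lambda_j\chi_{j,\mu,\mu_2}(D)\tilde\chi_{j,j}$ and $Y:=\sum_{j\neq\ell}\lambda_j\tilde\chi_{j,j}$, and split the target quantity by the triangle inequality as
\begin{align*}
g(X)-\chi_{\ell,\mu,\mu_2}(D)g(Y)=\bigl[g(X)-g(Y)\bigr]+\bigl(\mathrm{Id}-\chi_{\ell,\mu,\mu_2}(D)\bigr)g(Y).
\end{align*}
By the standing hypothesis $g(Y)=\tilde\chi_{\ell,\ell}$, so the $L^2$-norm of the second bracket is precisely $\|\tilde\chi_{\ell,\ell}-\chi_{\ell,\mu,\mu_2}(D)\tilde\chi_{\ell,\ell}\|_{L^2}$, contributing the final summand in the bound.

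For the first bracket I would expand $g$ with the multinomial theorem, writing $g(X)$ and $g(Y)$ as linear combinations, with $\lambda$-dependent coefficients, of monomials of degree $\le d$ in $\chi_{j,\mu,\mu_2}(D)\tilde\chi_{j,j}$ and $\tilde\chi_{j,j}$ respectively. Corresponding monomials are then compared via the telescoping identity
\begin{align*}
\prod_{j\neq\ell}a_j^{\alpha_j}-\prod_{j\neq\ell}b_j^{\alpha_j}=\sum_{j_0:\alpha_{j_0}\ge1}\Big(\prod_{j<j_0}a_j^{\alpha_j}\Big)(a_{j_0}^{\alpha_{j_0}}-b_{j_0}^{\alpha_{j_0}})\Big(\prod_{j>j_0}b_j^{\alpha_j}\Big)
\end{align*}
combined with the factorization $a^n-b^n=(a-b)\sum_{i=0}^{n-1}a^ib^{n-1-i}$, which isolates in each summand exactly one factor $(\chi_{j_0,\mu,\mu_2}(D)-\mathrm{Id})\tilde\chi_{j_0,j_0}$, matching the quantity we want on the right-hand side. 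Each resulting summand I would estimate in $L^2$ by H\"older's inequality with $\tfrac{1}{p}+\tfrac{1}{q}=\tfrac{1}{2}$, placing this distinguished factor in $L^p$ and the remaining product (of total degree $\le d-1$ in the $\tilde\chi_{j,j}$'s and their multiplier images) in $L^q$. The $L^q$-norm is controlled factor by factor via a generalized H\"older argument, using the $L^\infty$-boundedness of $\tilde\chi_{j,j}$ and the $L^r$-boundedness of each multiplier $\chi_{j,\mu,\mu_2}(D)$ for every $r<\infty$; the latter is provided by Proposition~\ref{prop:grafakos}, since the symbols $\chi_{j,\mu,\mu_2}(k)$ are smooth with derivatives satisfying the Marcinkiewicz decay~\eqref{eq:decay-multiplier}.

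The $L^p$-norm of the distinguished factor is then obtained by the interpolation $\|f\|_{L^p}\le\|f\|_{L^2}^{\theta}\|f\|_{L^r}^{1-\theta}$, taking $p$ slightly above $2$ and $r$ sufficiently large so that $\theta=1-\gamma$; the $L^r$-factor is bounded using $\|\tilde\chi_{j_0,j_0}\|_{L^\infty}\le\|F\|_\infty+1$ and a further application of Proposition~\ref{prop:grafakos}. Assembling the pieces, summing over the telescoping index $j_0$ and the multi-indices $\alpha$, and invoking the pointwise inequality $z^{1-\gamma}\le\psi_{1-\gamma}(z)$, gives an estimate of the form
\begin{align*}
\|g(X)-g(Y)\|_{L^2}\le C(\gamma,g,F)\sum_{j_0\neq\ell}|\lambda_{j_0}|\,Q_{j_0}(|\lambda|)\,\psi_{1-\gamma}\bigl(\|\tilde\chi_{j_0,j_0}-\chi_{j_0,\mu,\mu_2}(D)\tilde\chi_{j_0,j_0}\|_{L^2}\bigr),
\end{align*}
where each $Q_{j_0}$ is a polynomial in $(|\lambda_j|)_{j\neq\ell}$ of degree $\le d-1$. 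The combined $\lambda$-prefactor $|\lambda_{j_0}|Q_{j_0}(|\lambda|)$ is then bounded by $\psi_d(|\lambda_{j_0}|)$ up to a constant depending on $g$ and $F$, absorbing the polynomial contribution of the remaining $|\lambda_j|$'s (which in the intended applications of the lemma are fixed numbers determined by the wells and therefore controlled in terms of $F$).

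The main technical obstacle is the interpolation step: because the multipliers $\chi_{j,\mu,\mu_2}(D)$ are not uniformly $L^\infty$-bounded, one cannot pass from $L^2$ to $L^p$ losslessly. Proposition~\ref{prop:grafakos} only yields $L^r$-boundedness for $r<\infty$, with operator norm growing with $r$; this is what forces the exponent $1-\gamma$ strictly below~$1$ and causes the constant $C$ to blow up as $\gamma\to0$. The two-regime structure of $\psi_{1-\gamma}$ and $\psi_d$ in the claim is precisely what accommodates, respectively, this interpolation loss (relevant when the $L^2$-error is small) and the polynomial degree of $g$ (relevant when some $|\lambda_j|$ is large).
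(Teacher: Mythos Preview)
Your proof is correct and follows essentially the same route as the paper: split via the triangle inequality into $g(X)-g(Y)$ and $(\mathrm{Id}-\chi_{\ell,\mu,\mu_2}(D))g(Y)$, identify the latter with $\|\tilde\chi_{\ell,\ell}-\chi_{\ell,\mu,\mu_2}(D)\tilde\chi_{\ell,\ell}\|_{L^2}$ via the hypothesis, and control $g(X)-g(Y)$ through H\"older, interpolation, and the uniform $L^r$--$L^r$ multiplier bounds from Proposition~\ref{prop:grafakos}. The only simplification the paper makes is that, since $g$ is a polynomial of one variable and $X,Y$ are scalar-valued, it applies the univariate factorization $X^d-Y^d=(X-Y)G(X,Y)$ directly, with $X-Y=\sum_{j\neq\ell}\lambda_j(\chi_{j,\mu,\mu_2}(D)-\mathrm{Id})\tilde\chi_{j,j}$, thereby avoiding the multinomial expansion and telescoping; the subsequent H\"older and interpolation steps (with exponents $2+2\gamma$ and $(2+2\gamma)/\gamma$) are then identical to yours.
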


\begin{proof}
\emph{Step 1: Dealing with the lack of global Lipschitz bounds for $g$.}
Using the convention that $\lambda_{\ell} =0$, we begin by proving the following auxiliary result: For any $\gamma \in (0,1)$ there exists a constant $C>0$ depending on $g, \gamma$ such that
\begin{align}
\label{eq:aux_aim}
\|g(\sum\limits_{j=1}^{n} \lambda_j\tilde{\chi}_{j,j}) - g(\sum\limits_{j=1}^{n} \lambda_j {\chi}_{j,\mu,\mu_2}(D)\tilde\chi_{j,j})\|_{L^2} \leq C \sum\limits_{j=1}^{n}\max\{|\lambda_j|^{d},|\lambda_j| \}  \|\tilde{\chi}_{j,j}- {\chi}_{j,\mu,\mu_2}(D)\tilde\chi_{j,j} \|_{L^2}^{1-\gamma}.
\end{align}
By the triangle inequality, we may assume without loss of generality, that $g(t)=t^d$ for some $d\in \N \cup \{0\}$. We now argue as in \cite[Lemma 4.5]{RT21}. Expanding $a^d-b^d = (a-b)G(a,b)$ for $d\geq 1$ (and noticing that the differences cancel completely for $d=0$), where $G$ is a polynomial in $a,b$ of degree $d-1$, we obtain by H\"older's inequality for any $\gamma \in (0,1)$ that
\begin{align}
\label{eq:poly}
\begin{split}
&\|g(\sum\limits_{j=1}^{n} \lambda_j \tilde\chi_{j,j}) - g(\sum\limits_{j=1}^{n} \lambda_j {\chi}_{j,\mu,\mu_2}(D) \tilde \chi_{j,j})\|_{L^2}\\
&\leq \sum\limits_{j=1}^{n} |\lambda_j | \|(\tilde\chi_{j,j} -  {\chi}_{j,\mu,\mu_2}(D) \tilde{\chi}_{j,j} ) G(\sum\limits_{j=1}^{n} \lambda_j \tilde\chi_{j,j}, \sum\limits_{j=1}^{n} \lambda_j {\chi}_{j,\mu,\mu_2}(D) \tilde \chi_{j,j}) \|_{L^2}\\
&\leq \sum\limits_{j=1}^{n} |\lambda_j | \|\tilde\chi_{j,j} -  {\chi}_{j,\mu,\mu_2}(D) \tilde{\chi}_{j,j} \|_{L^{2+2\gamma}} \| G(\sum\limits_{j=1}^{n} \lambda_j \tilde\chi_{j,j}, \sum\limits_{j=1}^{n} \lambda_j {\chi}_{j,\mu,\mu_2}(D) \tilde \chi_{j,j}) \|_{L^{\frac{2+2\gamma}{\gamma}}}.
\end{split}
\end{align}
By interpolation, the $L^p$-$L^p$ boundedness of Fourier multipliers given by Proposition \ref{prop:grafakos} (which results in uniform in $\mu$ and $\mu_2$ bounds for the operator norms of $\chi_{j,\mu,\mu_2}(D)$ and $1-\chi_{j,\mu,\mu_2}(D)$) and the boundedness of the functions $\tilde{\chi}_{j,j}$, we further obtain
\begin{align}
\label{eq:interpol}
\begin{split}
&\|\tilde\chi_{j,j} -  {\chi}_{j,\mu,\mu_2}(D) \tilde{\chi}_{j,j} \|_{L^{2+2\gamma}}\\
&\leq \|\tilde\chi_{j,j} -  {\chi}_{j,\mu,\mu_2}(D) \tilde{\chi}_{j,j} \|_{L^2}^{1-\gamma} \|\tilde\chi_{j,j} -  {\chi}_{j,\mu,\mu_2}(D) \tilde{\chi}_{j,j} \|_{L^{\frac{2+2\gamma}{\gamma}}}^{\gamma}
\leq C \|\tilde\chi_{j,j} -  {\chi}_{j,\mu,\mu_2}(D) \tilde{\chi}_{j,j} \|_{L^2}^{1-\gamma},
\end{split}
\end{align}
where $C>0$ is a constant depending on $\gamma$ and $F$.
We deal with the remaining nonlinear contribution as in \cite{RT21} and estimate by H\"older's inequality
\begin{align}
\label{eq:nonlinear}
\begin{split}
&\| G(\sum\limits_{j=1}^{n} \lambda_j\tilde\chi_{j,j}, \sum\limits_{j=1}^{n} \lambda_j {\chi}_{j,\mu,\mu_2}(D) \tilde \chi_{j,j})\|_{L^{\frac{2+2\gamma}{\gamma}}}\\
&\leq \sum\limits_{h=0}^{d-1} \|\sum\limits_{j=1}^{n} \lambda_j\tilde\chi_{j,j}\|_{L^{\frac{(2+2\gamma)(d-1)}{\gamma}}}^{h} \|\sum\limits_{j=1}^{n} \lambda_j {\chi}_{j,\mu,\mu_2}(D) \tilde \chi_{j,j}\|_{L^{\frac{(2+2\gamma)(d-1)}{\gamma}}}^{d-1-h}.
\end{split}
\end{align}
Using again the (uniform) $L^p$-$L^p$ boundedness of our Fourier multipliers, the boundedness of the functions $\tilde{\chi}_{j,j}$ and inserting \eqref{eq:interpol}, \eqref{eq:nonlinear} into \eqref{eq:poly} thus finally yields \eqref{eq:aux_aim}.

\emph{Step 2: Conclusion.} With the result of Step 1 in hand, the conclusion now follows from the triangle inequality:
 \begin{align*}
 &\|g(\sum\limits_{j\neq \ell} \lambda_j {\chi}_{j,\mu,\mu_2}(D)\tilde\chi_{j,j}) - \chi_{\ell,\mu,\mu_2}(D) g( \sum\limits_{j\neq \ell} \lambda_j\tilde \chi_{j,j})\|_{L^2}\\
 & \leq 
   \|g(\sum\limits_{j=1}^{n} \lambda_j {\chi}_{j,\mu,\mu_2}(D)\tilde\chi_{j,j}) -   g(\sum\limits_{j=1}^{n} \lambda_j \tilde \chi_{j,j})\|_{L^2} +  \| g(\sum\limits_{j=1}^{n} \lambda_j \tilde \chi_{j,j})- \chi_{\ell,\mu,\mu_2}(D)g(\sum\limits_{j=1}^{n} \lambda_j \tilde \chi_{j,j})\|_{L^2}\\
& \leq C \sum\limits_{j\neq \ell}  \psi_{d}\big(|\lambda_j|\big) \psi_{1-\gamma}\big(\|\tilde\chi_{j,j}-\chi_{j,\mu,\mu_2}(D)\tilde\chi_{j,j}\|_{L^2}\big) + \|\tilde\chi_{\ell, \ell}-\chi_{\ell,\mu,\mu_2}(D)\tilde \chi_{\ell, \ell}\|_{L^2}.
 \end{align*}
In the second inequality we have used the assumption that $\tilde{\chi}_{\ell, \ell} = g\left( \sum\limits_{j \neq \ell} \lambda_j \tilde{\chi}_{j,j} \right)$.
\end{proof}

\begin{rmk}[The linear case]
The commutation result of Lemma \ref{lem:comm} is designed for nonlinear (polynomial) relations $g$, that is $d\ge 2$.
While the case $g$ constant (i.e. $d=0$) is trivial, it is worth commenting on the linear case $d=1$.
Indeed, take for simplicity $g(t)=t$, by simply adding and subtracting $\tilde\chi_{\ell,\ell}$ and applying the triangle inequality we have
\begin{multline*}
\|\sum_{j\neq\ell}\lambda_j\chi_{j,\mu,\mu_2}(D)\tilde\chi_{j,j}-\chi_{\ell,\mu,\mu_2}(D)\tilde\chi_{\ell,\ell}\|_{L^2} \\
\le \sum_{j\neq\ell}|\lambda_j|\|\tilde\chi_{j,j}-\chi_{j,\mu,\mu_2}(D)\tilde\chi_{j,j}\|_{L^2}+\|\tilde\chi_{\ell,\ell}-\chi_{\ell,\mu,\mu_2}(D)\tilde\chi_{\ell,\ell}\|_{L^2}.
\end{multline*}
We, in particular, highlight that there are no losses (manifested in the presence of the parameter $\gamma>0$) occurring due to the interpolation in the linear case. 
\end{rmk}

Finally, we combine the information from the previous two auxiliary results into an improved conical localization statement.

\begin{prop}[Exploiting the nonlinearity]\label{prop:cone-red-nonl}
Let  $\lambda_j \in \R$ for $j\in \{1,\dots,n\}$ and assume that for some $\ell\in \{1,\dots,n\}$ it holds that $\tilde{\chi}_{\ell, \ell} = g\left( \sum\limits_{j\neq \ell} \lambda_j \tilde\chi_{j,j} \right)$, where $g$ is a polynomial of degree $d\in \N\cup \{0\}$.
Let $\mu_3:=M\mu \mu_2$ for some $M>0$ depending on $d$.
Then, for any $\gamma \in (0,1)$ there exists a constant $C>0$ (depending on $g,\gamma, F$) such that
\begin{align*}
& \|\tilde\chi_{\ell, \ell}- \chi_{\ell,\mu,\mu_3}(D)\tilde \chi_{\ell, \ell}\|_{L^2}\\
& \leq C \sum\limits_{j\neq \ell} \psi_{d}\big(|\lambda_j|\big)  \psi_{1-\gamma}\big(\|\tilde\chi_{j,j}-\chi_{j,\mu,\mu_2}(D)\tilde\chi_{j,j}\|_{L^2}\big) + C\|\tilde{\chi}_{\ell, \ell} - \chi_{\ell, \mu, \mu_2}(D) \tilde{\chi}_{\ell, \ell}\|_{L^2}.
\end{align*}
\end{prop}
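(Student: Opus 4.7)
The plan is to combine two observations. First, the hypothesis $\tilde{\chi}_{\ell,\ell} = g(\sum_{j\neq\ell}\lambda_j\tilde{\chi}_{j,j})$ lets us replace $\tilde{\chi}_{\ell,\ell}$ inside the polynomial by its low-frequency surrogate, and the resulting error is controlled by exactly the $L^p$-interpolation argument used in Step~1 of the proof of Lemma~\ref{lem:comm}. Since that argument only involves the indices $j\neq\ell$, it produces only the first block of terms in the claim. Second, the auxiliary function
\[
h := g\Big(\sum_{j\neq \ell}\lambda_j\,\chi_{j,\mu,\mu_2}(D)\tilde{\chi}_{j,j}\Big)
\]
has Fourier support inside the $d$-fold Minkowski sum of the sets $C_{j,2\mu,2\mu_2}$ (together with the low-frequency region where the multipliers are identically one), so its frequencies are squeezed into a thin slab around $\{k_\ell=0\}$, with $|k|\lesssim d\mu_2$ and $|k_\ell|\lesssim d\mu\mu_2$ for each contributing frequency.

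Concretely, I would decompose via the triangle inequality
\[
\|\tilde{\chi}_{\ell,\ell}-\chi_{\ell,\mu,\mu_3}(D)\tilde{\chi}_{\ell,\ell}\|_{L^2} \leq \|\tilde{\chi}_{\ell,\ell}-h\|_{L^2} + \|h-\chi_{\ell,\mu,\mu_3}(D)\tilde{\chi}_{\ell,\ell}\|_{L^2}
\]
and bound the first summand by Step~1 of the proof of Lemma~\ref{lem:comm} (polynomial factorization $a^d-b^d=(a-b)G(a,b)$, H\"older's inequality, $L^p$-interpolation, and the uniform $L^p$-bounds on the multipliers given by Proposition~\ref{prop:grafakos}), which delivers the first group of error terms without any $\ell$-contribution. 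For the second summand, I would choose $M=M(d)$ large enough so that the Minkowski-sum support of $\widehat h$ lies entirely inside the region on which $\chi_{\ell,\mu,\mu_3}(D)$ acts as the identity; then $\chi_{\ell,\mu,\mu_3}(D)h = h$ and the uniform $L^2$-boundedness from Proposition~\ref{prop:grafakos} yields
\[
\|h-\chi_{\ell,\mu,\mu_3}(D)\tilde{\chi}_{\ell,\ell}\|_{L^2} = \|\chi_{\ell,\mu,\mu_3}(D)(h-\tilde{\chi}_{\ell,\ell})\|_{L^2} \lesssim \|h-\tilde{\chi}_{\ell,\ell}\|_{L^2}.
\]
To recover the last summand $\|\tilde{\chi}_{\ell,\ell}-\chi_{\ell,\mu,\mu_2}(D)\tilde{\chi}_{\ell,\ell}\|_{L^2}$ appearing in the conclusion, I would insert $\pm\,\chi_{\ell,\mu,\mu_2}(D)\tilde{\chi}_{\ell,\ell}$ into $\|h-\tilde{\chi}_{\ell,\ell}\|_{L^2}$ and apply the full statement of Lemma~\ref{lem:comm} at scale $\mu_2$ to the resulting commutator piece $\|h-\chi_{\ell,\mu,\mu_2}(D)\tilde{\chi}_{\ell,\ell}\|_{L^2}$, which is exactly where the $\ell$-th cut-off term is produced.

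The main obstacle is the geometric Fourier-support step: the cones $C_{j,2\mu,2\mu_2}$ are oriented along axes $e_j$ with $j\neq \ell$, and so a priori their Minkowski sums do not lie in a cone around $e_\ell$. The point is that each factor forces $|k^{(m)}_\ell|\lesssim \mu\mu_2$, so the $\ell$-th coordinate of the sum is of order $\mu\mu_2$, and the remaining frequencies are absorbed either into the low-frequency region where the multiplier is $1$ or into the enlarged truncated cone $C_{\ell,\mu,\mu_3}$; this geometric balance is precisely what pins down the scaling relation $\mu_3 = M\mu\mu_2$ with $M$ depending only on the polynomial degree $d$.
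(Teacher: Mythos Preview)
Your decomposition and the use of Step~1 of Lemma~\ref{lem:comm} to control $\|\tilde\chi_{\ell,\ell}-h\|_{L^2}$ are fine, but the geometric claim $\chi_{\ell,\mu,\mu_3}(D)h=h$ is false, and this is the crux of the argument. The Fourier support of $h$ lies in the $d$-fold Minkowski sum of the cones $C_{j,2\mu,2\mu_2}$ with $j\neq\ell$; a typical frequency there satisfies $|k_\ell|\lesssim d\mu\mu_2$ but $|k|$ can be as large as $2d\mu_2$. In other words, $\widehat h$ is supported in a thin \emph{slab} $\{|k_\ell|\leq M\mu\mu_2\}$ transverse to $e_\ell$, not in the truncated cone $C_{\ell,\mu,\mu_3}$, which is thin in the directions \emph{orthogonal} to $e_\ell$ and in addition requires $|k|\leq\mu_3=M\mu\mu_2$. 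No choice of $M$ repairs this: the slab and the cone are essentially perpendicular regions, and neither the low-frequency patch near the origin nor the enlarged cone absorbs the large $k_j$-components coming from the other axes.

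The correct geometric observation, and the one used in the paper, is dual to yours: since $\widehat h$ vanishes on $\{|k_\ell|>M\mu\mu_2\}$, and since the \emph{difference} of the two conical multipliers satisfies the pointwise bound
\[
|\chi_{\ell,\mu,\mu_2}-\chi_{\ell,\mu,\mu_3}|\leq \mathbf 1_{\{|k_\ell|>M\mu\mu_2\}}\,\chi_{\ell,\mu,\mu_2}
\]
(both multipliers share the same angular cut-off and agree on $\{|k|\leq\mu_3\}$, so their difference lives in the cone at radii $>\mu_3$, where $|k_\ell|\gtrsim\mu_3$), one obtains $(\chi_{\ell,\mu,\mu_2}(D)-\chi_{\ell,\mu,\mu_3}(D))h=0$. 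This, together with Lemma~\ref{lem:comm} applied to $\|\chi_{\ell,\mu,\mu_2}(D)\tilde\chi_{\ell,\ell}-h\|_{L^2}$, yields the bound on $\|(\chi_{\ell,\mu,\mu_2}(D)-\chi_{\ell,\mu,\mu_3}(D))\tilde\chi_{\ell,\ell}\|_{L^2}$, after which a triangle inequality completes the proof. Your approach can be repaired along these lines, but as written the identity $\chi_{\ell,\mu,\mu_3}(D)h=h$ does not hold and the second summand is not controlled.
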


\begin{proof}
We argue as in \cite[Lemma 4.7]{RT21}. 
We first note that by the construction of the cones, we have that
\begin{align}
\label{eq:vanish}
\max\limits_{k\in C_{j,2\mu,2\mu_2}}|k_{\ell}| \leq 4\mu_2 \mu, \ j \neq \ell.
\end{align}
Hence, since $g$ is a polynomial and due to the fact that multiplication is turned into convolution by the Fourier transform and recalling that $\supp(\chi_{j,\mu,\mu_2})\cap\Z^n\subset C_{j,2\mu,2\mu_2}$, we obtain that
\begin{align*}
\F ( g(\sum\limits_{j \neq \ell}\lambda_j \chi_{j,\mu,\mu_2}(D) \tilde{\chi}_{j,j}))(k) = 0 \mbox{ for } |k_{\ell}|\geq M \mu \mu_2,
\end{align*}
where $M=M(d)>0$ depends on the degree of $g$.
Setting $\chi_{\ell,\mu \mu_2}$ to be the characteristic function of the set $\{k\in \R^n: \ |k_{\ell}|> M \mu \mu_2\}$ and recalling \eqref{eq:vanish} as well as Lemma \ref{lem:comm}, we infer that
\begin{align*}
&\|\chi_{\ell,\mu \mu_2}(D) \chi_{\ell,\mu,\mu_2}(D)g(\sum\limits_{j\neq \ell} \lambda_j \tilde\chi_{j,j})\|_{L^2}\\
&= \|\chi_{\ell,\mu \mu_2}(D) \big(\chi_{\ell,\mu,\mu_2}(D)g(\sum\limits_{j\neq \ell} \lambda_j \tilde\chi_{j,j}) - g(\sum\limits_{j\neq \ell} \lambda_j \chi_{j,\mu,\mu_2}(D) \tilde{\chi}_{j,j} )\big)\|_{L^2}\\
& \leq \|\chi_{\ell,\mu,\mu_2}(D)g(\sum\limits_{j\neq \ell} \lambda_j \tilde\chi_{j,j}) - g(\sum\limits_{j\neq \ell} \lambda_j \chi_{j,\mu,\mu_2}(D) \tilde{\chi}_{j,j} )\|_{L^2}\\
& \leq C \sum\limits_{j\neq \ell}  \psi_{d}\big(|\lambda_j|\big)  \psi_{1-\gamma}\big(\|\tilde\chi_{j,j}-\chi_{j,\mu,\mu_2}(D)\tilde\chi_{j,j}\|_{L^2}\big)
 +  \|\tilde\chi_{\ell, \ell}-\chi_{\ell,\mu,\mu_2}(D)\tilde \chi_{\ell, \ell}\|_{L^2}.
\end{align*}
In particular, by virtue of the pointwise bound $|\chi_{\ell,\mu,\mu_2}-\chi_{\ell,\mu,\mu_3}|\leq \chi_{\ell,\mu\mu_2} \chi_{\ell,\mu,\mu_2}$, we obtain
\begin{align*}
&\|\chi_{\ell,\mu,\mu_2}(D)g(\sum\limits_{j\neq \ell} \lambda_j \tilde\chi_{j,j}) - \chi_{\ell,\mu,\mu_3}(D)g(\sum\limits_{j\neq \ell} \lambda_j \tilde\chi_{j,j})\|_{L^2} \\
& \leq \|\chi_{\ell,\mu \mu_2}(D) \chi_{\ell,\mu,\mu_2}(D)g(\sum\limits_{j\neq \ell} \lambda_j \tilde\chi_{j,j})\|_{L^2}\\
& \leq  C \sum\limits_{j\neq \ell}  \psi_{d}\big(|\lambda_j|\big)  \psi_{1-\gamma}\big(\|\tilde\chi_{j,j}-\chi_{j,\mu,\mu_2}(D)\tilde\chi_{j,j}\|_{L^2}\big)
 +  \|\tilde\chi_{\ell, \ell}-\chi_{\ell,\mu,\mu_2}(D)\tilde \chi_{\ell, \ell}\|_{L^2} .
\end{align*}
As a consequence, by the triangle and by the bounds from above
\begin{align*}
&\|g(\sum\limits_{j\neq \ell} \lambda_j \tilde\chi_{j,j}) - \chi_{\ell,\mu,\mu_3}(D)g(\sum\limits_{j\neq \ell} \lambda_j \tilde\chi_{j,j})\|_{L^2}\\
&\leq \|g(\sum\limits_{j\neq \ell} \lambda_j \tilde\chi_{j,j})- \chi_{\ell,\mu,\mu_2}(D)g(\sum\limits_{j\neq \ell} \lambda_j \tilde\chi_{j,j})\|_{L^2}\\
& \quad  + \|\chi_{\ell,\mu,\mu_2}(D)g(\sum\limits_{j\neq \ell} \lambda_j \tilde\chi_{j,j}) - \chi_{\ell,\mu,\mu_3}(D)g(\sum\limits_{j\neq \ell} \lambda_j \tilde\chi_{j,j})\|_{L^2}\\
& \leq C \sum\limits_{j\neq \ell} \psi_{d}\big(|\lambda_j|\big)  \psi_{1-\gamma}\big(\|\tilde\chi_{j,j}-\chi_{j,\mu,\mu_2}(D)\tilde\chi_{j,j}\|_{L^2}\big)
 +  \|\tilde\chi_{\ell, \ell}-\chi_{\ell,\mu,\mu_2}(D)\tilde \chi_{\ell, \ell}\|_{L^2} . 
\end{align*}
Here, in the last line, we used that $\tilde{\chi}_{\ell, \ell} = g\left( \sum\limits_{j\neq \ell} \lambda_j \tilde\chi_{j,j} \right)$.
\end{proof}

For technical reasons we will need a slight modification of the previous result that lets us deal with ``non-symmetric'' frequency localization on cones.

\begin{cor}\label{cor:cone-red-nonl}
Let  $\lambda_j \in \R$ for $j\in \{1,\dots,n\}$ and assume that for some $\ell\in \{1,\dots,n\}$ it holds that $\tilde{\chi}_{\ell, \ell} = g\left( \sum\limits_{j\neq \ell} \lambda_j \tilde\chi_{j,j} \right)$, where $g$ is a polynomial of degree $d\in \N\cup \{0\}$.
Let $\mu_3:=M\mu \mu_2$ and $\mu_4:=M'\mu \mu_3$ for some $M,M'>0$ depending on $d$ such that $0<\mu_4<\mu_3<\mu_2$.
Then, for any $\gamma \in (0,1)$ there exists a constant $C>0$ (depending on $g,\gamma, F$) such that
\begin{align*}
& \|\tilde\chi_{\ell, \ell}- \chi_{\ell,\mu,\mu_4}(D)\tilde \chi_{\ell, \ell}\|_{L^2}\\
& \leq C \sum\limits_{j\neq \ell}  \psi_{d}\big(|\lambda_j|\big)  \psi_{1-\gamma}\big(\|\tilde\chi_{j,j}-\chi_{j,\mu,\mu_3}(D)\tilde\chi_{j,j}\|_{L^2}\big)
 +  \|\tilde\chi_{\ell, \ell}-\chi_{\ell,\mu,\mu_3}(D)\tilde \chi_{\ell, \ell}\|_{L^2}.
\end{align*}
\end{cor}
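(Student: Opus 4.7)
The plan is to obtain the corollary as an immediate consequence of Proposition \ref{prop:cone-red-nonl} via a relabeling of the scale parameters. More concretely, I would apply the proposition with the triple $(\mu,\mu_3,\mu_4)$ substituted for the triple $(\mu,\mu_2,\mu_3)$ appearing there. The proposition demands that its ``third'' scale parameter equal a constant (depending on $d$) times $\mu$ times its ``second'' scale parameter, i.e.~$\mu_3 = M\mu\mu_2$; in the corollary the analogous relation $\mu_4 = M'\mu\mu_3$ is imposed as a hypothesis, with $M'$ playing the role of $M$. The constant produced by the proposition is thus inherited here as the constant $M'$ in the definition of $\mu_4$.

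The structural hypothesis of the proposition, namely the polynomial identity
\[
\tilde\chi_{\ell,\ell} = g\Bigl(\sum_{j\neq\ell}\lambda_j\tilde\chi_{j,j}\Bigr),
\]
makes no reference to the cone-truncation radii and therefore transfers verbatim. Hence, feeding the relabeled parameters into Proposition \ref{prop:cone-red-nonl} yields exactly the estimate claimed in the corollary. The nested ordering $0<\mu_4<\mu_3<\mu_2$ is part of the corollary's standing assumptions and is compatible with the construction of the multipliers $\chi_{\ell,\mu,\mu_j}(D)$, so no additional verification is needed beyond assuming (as can be done without loss of generality in applications) that $\mu$ is small enough that $M\mu<1$ and $M'\mu<1$.

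No genuine obstacle arises; this is purely a bookkeeping statement whose content is entirely contained in Proposition \ref{prop:cone-red-nonl}. Its role, as I read it, is to record a ``shifted'' version of the bound where the right-hand side is already expressed at the tighter scale $\mu_3$ rather than at the original scale $\mu_2$, which is the form that will be convenient in the iterative bootstrap arguments used to prove the lower bounds of Theorems \ref{thm:K3}--\ref{thm:Kn}.
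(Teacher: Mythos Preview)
Your proposal is correct: the corollary as stated is precisely Proposition~\ref{prop:cone-red-nonl} applied with the pair $(\mu_3,\mu_4)$ in place of $(\mu_2,\mu_3)$, taking $M'$ to be the constant $M$ furnished by the proposition. The paper does not cite the proposition directly but instead reruns its proof with the shifted parameters; in doing so it actually keeps the $\ell$-th multiplier at the coarser scale $\mu_2$ (while using $\mu_3$ for the $j\neq\ell$ terms), which yields the marginally sharper intermediate bound with $\|\tilde\chi_{\ell,\ell}-\chi_{\ell,\mu,\mu_2}(D)\tilde\chi_{\ell,\ell}\|_{L^2}$ before weakening to $\mu_3$ via the pointwise monotonicity $\chi_{\ell,\mu,\mu_3}\le\chi_{\ell,\mu,\mu_2}$. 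For the corollary as written your relabeling is the more direct route and loses nothing.
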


\begin{proof}
Working analogously as in the proof of Proposition \ref{prop:cone-red-nonl} we have that
$$
\max\limits_{k\in C_{j,2\mu,2\mu_3}}|k_{\ell}| \leq 4\mu_3 \mu, \ j \neq \ell,
$$
and since $g$ is a polynomial we obtain
\begin{align*}
\F ( g(\sum\limits_{j \neq \ell}\lambda_j \chi_{j,\mu,\mu_3}(D) \tilde{\chi}_{j,j}))(k) = 0 \mbox{ for } |k_{\ell}|\geq M' \mu \mu_3.
\end{align*}
Setting $\chi_{\ell,\mu \mu_3}$ to be the characteristic function of the set $\{k\in \R^n: \ |k_{\ell}|> M' \mu \mu_3\}$, from equation \eqref{eq:aux_aim} (exploited with $\mu_3$ in place of $\mu_2$) we infer that
\begin{align*}
&\|\chi_{\ell,\mu \mu_3}(D) \chi_{\ell,\mu,\mu_2}(D)g(\sum\limits_{j\neq \ell} \lambda_j \tilde\chi_{j,j})\|_{L^2}\\
&= \|\chi_{\ell,\mu \mu_3}(D) \big(\chi_{\ell,\mu,\mu_2}(D)g(\sum\limits_{j\neq \ell} \lambda_j \tilde\chi_{j,j}) - g(\sum\limits_{j\neq \ell} \lambda_j \chi_{j,\mu,\mu_3}(D) \tilde{\chi}_{j,j} )\big)\|_{L^2}\\
& \leq \|\chi_{\ell,\mu,\mu_2}(D)g(\sum\limits_{j\neq \ell} \lambda_j \tilde\chi_{j,j}) - g(\sum\limits_{j\neq \ell} \lambda_j \chi_{j,\mu,\mu_3}(D) \tilde{\chi}_{j,j} )\|_{L^2}\\
& \leq \|\chi_{\ell,\mu,\mu_2}(D)\tilde\chi_{\ell,\ell}-\tilde\chi_{\ell,\ell}\|_{L^2}+\|g(\sum\limits_{j\neq \ell} \lambda_j \tilde{\chi}_{j,j} )- g(\sum\limits_{j\neq \ell} \lambda_j \chi_{j,\mu,\mu_3}(D) \tilde{\chi}_{j,j} )\|_{L^2}\\
& \leq C \sum\limits_{j\neq \ell} \psi_{d}\big(|\lambda_j|\big)  \psi_{1-\gamma}\big(\|\tilde\chi_{j,j}-\chi_{j,\mu,\mu_3}(D)\tilde\chi_{j,j}\|_{L^2}\big)
 +  \|\tilde\chi_{\ell, \ell}-\chi_{\ell,\mu,\mu_3}(D)\tilde \chi_{\ell, \ell}\|_{L^2}
\end{align*}
Using the bound $|\chi_{\ell,\mu,\mu_2}-\chi_{\ell,\mu,\mu_4}|\leq \chi_{\ell,\mu\mu_3} \chi_{\ell,\mu,\mu_2}$ and the triangle inequality as done at the end of the proof of Proposition \ref{prop:cone-red-nonl} we obtain the claim.
\end{proof}

\section{Three Wells: Proof of Theorem \ref{thm:K3}}
\label{sec:3well_lower}
In this section, we study a model problem involving three wells without gauges which gives rise to laminates of order up to two. As the main result of this section, we provide the proof of Theorem \ref{thm:K3} illustrating that the order of lamination of the displacement boundary data determines the energy scaling of the problem. We split the proof into two parts: First, in Section \ref{sec:3up} we discuss the upper bound construction (which essentially follows \cite{CC15} or \cite{KW16}) and then combine the ideas from \cite{RT21} with the ones from \cite{KW16} to deduce the (essentially matching) lower bound scaling behaviour in Section \ref{sec:3low2}.
Finally, in Section \ref{subsec:periodic3grad} we illustrate the difference between the periodic and Dirichlet settings by proving scaling bounds in the periodic setting.

\subsection{Upper bound}
\label{sec:3up}
The upper bound in Theorem \ref{thm:K3} (i) directly follows from Proposition \ref{prop:first-order} with $A=A_2, B=A_3$ and $p=2$.

To obtain the bound of Theorem \ref{thm:K3} (ii) we perform a second-order branching construction.
We therefore consider the auxiliary matrix 

\begin{align}
\label{eq:aux_matr}
J_1:=\frac{1}{2}A_2+\frac{1}{2}A_3=-A_1
\end{align}
which is rank-1-connected to all the stress-free states (see Figure \ref{fig:K3}).
We remark that in this case $F\in K_3^{(lc)}\setminus K_3^{(1)}$, that is a second-order laminate of the set $K_3$, can be written as $F=\lambda A_1+(1-\lambda)J_1$ for some $\lambda\in(0,1)$.
\begin{proof}[Proof of the upper bound from Theorem \ref{thm:K3} (ii)]
Due to translation invariance we assume $b=0$.
We work in several steps for the sake of clarity of exposition.

\emph{Step 1.}
Consider $u^{(1)}\in W^{1,\infty}((0,1)^2;\R^2)$ defined by Lemma \ref{prop:vert-bra} with $A=J_1$, $B=A_1$, $p=2$, $H=L=1$ and $N=\frac{4}{r}$, where $0<r<1$ is an arbitrarily small length scale.
We set $\chi^{(1)}$ to be the projection of $\nabla u^{(1)}$ onto $\{A_1,J_1\}$.
Note that such a projection function is well-defined almost everywhere by construction of $u^{(1)}$.

\emph{Step 2.}
We will define $u^{(2)}$ by replacing $u^{(1)}$ with a finely (branched) oscillation between the states $A_2$ and $A_3$ on $\{\chi^{(1)}=J_1\}$ (outside of the cut-off region) with boundary datum $u^{(1)}$.
We make this substitution inside each cell $\omega_{j,k}$ with $j\in\{0,\dots,j_0\}$ as defined in 
\eqref{eq:cells-bra}.
Here we have $\{\chi^{(1)}=J_1\}\cap\omega_{j,k}=\omega_{j,k}^{(1)}\cup\omega_{j,k}^{(3)}$ with
\begin{equation}\label{eq:cells-2order}
\begin{split}
\omega_{j,k}^{(1)} &:=\Big[k\ell_j,k\ell_j+\frac{\lambda\ell_j}{2}\Big]\times[y_j,y_{j+1}] \\
\omega_{j,k}^{(3)} &:=\Big\{\frac{\lambda\ell_j}{2}\le x_1-\frac{(1-\lambda)\ell_j(x_2-y_j)}{2h_j}-k\ell_j\le\lambda\ell_j,\, x_2\in[y_j,y_{j+1}]\Big\}.
\end{split}
\end{equation}
We recall that $\ell_j=\frac{r}{2^j}$, $y_j=1-\frac{\theta^j}{2}$ and $h_j=y_{j+1}-y_j=\frac{1-\theta}{2}\theta^j$ as defined in \eqref{eq:dim-i}.
Notice that the sets above correspond to $\omega_1$ and $\omega_3$ in Figure \ref{fig:cell2}. 

\begin{figure}[t]
\begin{center}
\includegraphics{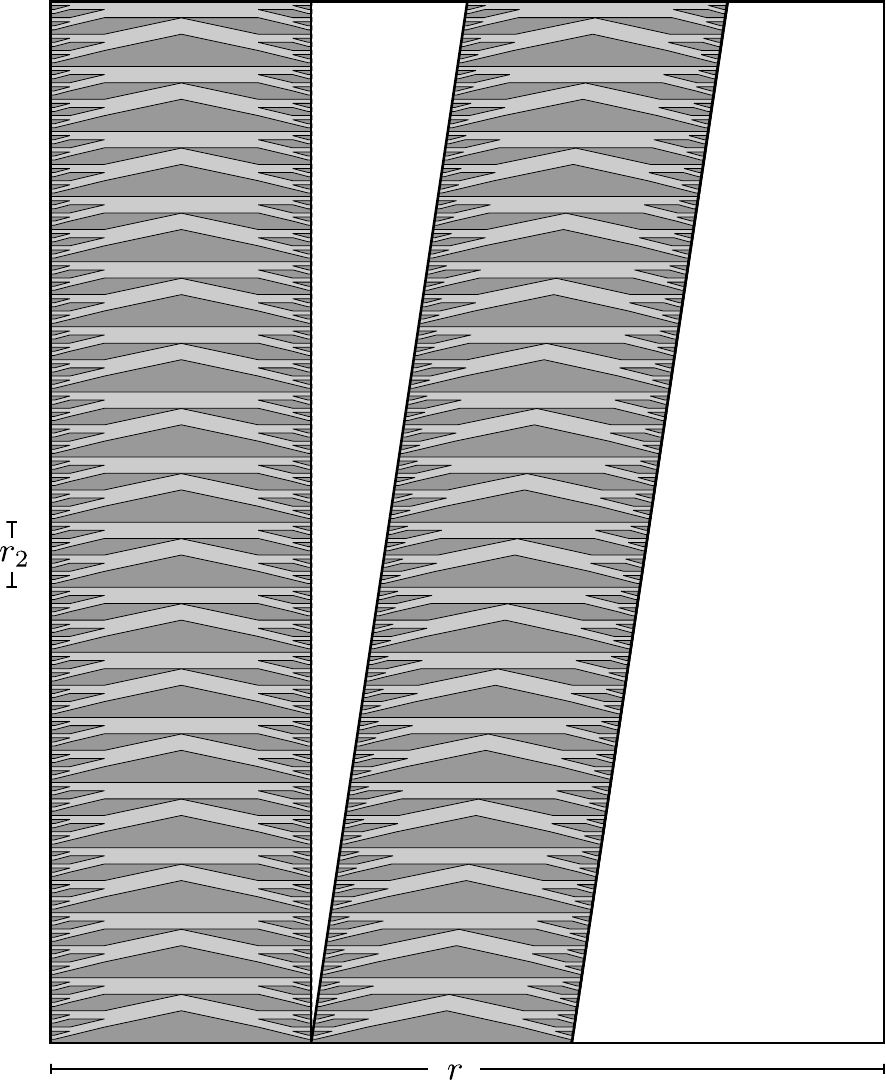}
\end{center}
\caption{An example of the second order laminate construction $u^{(2)}$ used in the proof of the upper bound for Theorem \ref{thm:K3}(ii) in a cell of generation zero, e.g. $\omega_{0,0}$.
Each color corresponds to a well of $K_3$.}
\label{fig:cell-second}
\end{figure}

\emph{Step 3.}
We fix a second small length scale $0<r_2<\frac{r}{4}$.
We apply Lemma \ref{prop:vert-bra} (with switched roles between $x_1$ and $x_2$) on $\omega_{j,k}^{(1)}$ with $F=J_1$, $A=A_2$, $B=A_3$ and $N=N_j:=\frac{(2\theta)^j}{\lambda r_2}$.
Notice that the hypothesis $N>\frac{4L}{H}$ in Lemma \ref{prop:vert-bra} is satisfied for such choices.
Thus, we can find a function $v_{j,k}^{(1)}\in W^{1,\infty}(\omega_{j,k}^{(1)};\R^2)$ such that $v_{j,k}^{(1)}=u^{(1)}=J_1x+b$ on $\p\omega_{j,k}^{(1)}$ and
\begin{equation}\label{eq:2nd-ord-cell}
\int_{\omega_{j,k}^{(1)}}\dist^2(\nabla v_{j,k}^{(1)},K_3)dx+\|D^2v_{j,k}^{(1)}\|_{TV(\omega_{j,k}^{(1)})}+Per(\omega_{j,k}^{(1)})\lesssim \frac{h_j^3}{N^2\ell_j}+\epsilon\ell_j N+\epsilon h_j.
\end{equation}
Here the quantities $\ell_j, h_j$ refer to the ones from Step 2 which determine the length scales in the inner branching construction, see Figure \ref{fig:cell-second}.
The construction inside $\omega_{j,k}^{(3)}$ is obtained from the one above, thanks to an affine change of variables, i.e.
\begin{equation}\label{eq:shear2D}
v_{j,k}^{(3)}(x_1,x_2)=v_{j,k}^{(1)}\Big(x_1-\frac{(1-\lambda)\ell_j(x_2-y_j)}{2h_j}-\frac{\lambda\ell_j}{2},x_2\Big)+\Big(-\frac{\lambda\ell_j}{2}+\frac{(1-\lambda)\ell_j (x_2-y_j)}{2h_j},0\Big).
\end{equation}
We thus get
\begin{equation}\label{eq:2nd-ord-cell2}
\begin{split}
\int_{\omega_{j,k}^{(3)}}\dist^2(\nabla v_{j,k}^{(3)},K_3)dx &=\int_{\omega_{j,k}^{(1)}}\dist^2\Big(\nabla v_{j,k}^{(1)}\begin{pmatrix}1&\frac{(\lambda-1)\ell_j}{2h_j}\\0&1\end{pmatrix}+\begin{pmatrix}0&\frac{(1-\lambda)\ell_j}{2h_j}\\0&0\end{pmatrix},K_3\Big)dx \\
&\lesssim \int_{\omega_{j,k}^{(1)}}\dist^2(\nabla v_{j,k}^{(1)},K_3)dx+\frac{\ell_j^3}{h_j}.
\end{split}
\end{equation}
We have now everything in place to define the function $u^{(2)}\in W^{1,\infty}((0,1)^2;\R^2)$ to be equal to $v_{j,k}^{(1)}(x)$ and $v_{j,k}^{(3)}(x)$ on $\omega_{j,k}^{(1)}$ and $\omega_{j,k}^{(3)}$ respectively and to $u^{(1)}$ otherwise.
We recall that in $\omega_{j,k}':=\omega_{j,k}\setminus(\omega_{j,k}^{(1)}\cup\omega_{j,k}^{(3)})$ only surface energy is present and thus
$$
\int_{\omega_{j,k}'}\dist^2(\nabla u^{(1)},K_3)dx+\|D^2u^{(1)}\|_{TV(\omega_{j,k}')}+Per(\omega_{j,k}')\lesssim\epsilon h_j.
$$
Combining the inequality above with \eqref{eq:2nd-ord-cell} and \eqref{eq:2nd-ord-cell2}, for every $j\in\{0,\dots,j_0\}$ and $k \in \{0,\dots,\frac{2^j}{r}-1\}$, we have that
\begin{equation*}\label{eq:2nd-ord-cell3}
\int_{\omega_{j,k}}\dist^2(\nabla u^{(2)},K_3)dx+\|D^2u^{(2)}\|_{TV(\omega_{j,k})}+Per(\omega_{j,k})
\lesssim \frac{h_j^3}{N^2\ell_j}+\epsilon\ell_j N+\frac{\ell_j^3}{h_j}+\epsilon h_j.
\end{equation*}
Eventually, summing over $j$ and $k$, and controlling the $(j_0+1)$-th term as in the proof of Lemma \ref{prop:vert-bra}, we get
\begin{equation}\label{eq:2D-2order}
\begin{split}
E_{\epsilon,3}(u^{(2)},\chi^{(2)}) &\lesssim \sum_{j=0}^{j_0+1} \frac{2^j}{r}\Big(\frac{\theta^j}{2^j}\frac{r_2^2}{r}+\Big(\frac{1}{8\theta}\Big)^j r^3+\epsilon\theta^j\frac{r}{r_2}\Big)\\
&\lesssim \sum_{j=0}^\infty \Big(\theta^j\Big(\frac{r_2}{r}\Big)^2+\Big(\frac{1}{4\theta}\Big)^j r^2+(2\theta)^j\frac{\epsilon}{r_2}\Big) \lesssim \Big(\frac{r_2}{r}\Big)^2+r^2+\frac{\epsilon}{r_2}
\end{split}
\end{equation}
for some $\frac{1}{4}<\theta<\frac{1}{2}$, where $\chi^{(2)}$ is the projection of $\nabla u^{(2)}$ on $K_3$.
In the first inequality in \eqref{eq:2D-2order} we have also used that $\|D\chi^{(2)}\|_{TV} \leq C \|D^2 u^{(2)}\|_{TV}$.
Optimizing the expression above we get $r_2\sim r^2$ and thus $r\sim\epsilon^\frac{1}{4}$.
We can therefore define $u_\epsilon\in W^{1,\infty}((0,1)^2;\R^2)$ with $u_\epsilon(x)=Fx$ for every $x\in\p[0,1]^2$ and 
\begin{equation*}
E_{\epsilon,3}(u_\epsilon,\chi_\epsilon)\lesssim \epsilon^\frac{1}{2},
\end{equation*}
where $u_\epsilon=u^{(2)}$ and $\chi_\epsilon=\chi^{(2)}$.
\end{proof}

\begin{rmk}
The choice $N=\frac{(2\theta)^j}{\lambda r_2}$ in Step 2 is motivated by the fact that it preserves the ``self-similarity'' of the construction.
This seems the most natural one among all the choices that keep the summability of the quantity in \eqref{eq:2D-2order}, which is possible. 
\end{rmk}

We also refer to Appendix \ref{sec:second-order} for two-dimensional constructions with general $p\in [1,\infty)$ and an arbitrary number of wells.

\subsection{Proof of the lower bounds}
\label{sec:3low2}

The proof of the lower bounds in Theorem \ref{thm:K3} in part mimics the argument from the stress-free case. We thus begin by recalling the rigidity proof in this setting.

\subsubsection{Motivation for the lower bound: The stress-free case}
\label{sec:3low1}

Before turning to the proof of the lower bounds in Theorem \ref{thm:K3}, we give an argument for the proof of the corresponding stress-free rigidity result. We will mimic parts of this in our proof of the lower bounds in Theorem \ref{thm:K3}.

In the stress-free case, we observe that the problem reads
\begin{align}
\label{eq:stress_free3}
\nabla u = \begin{pmatrix} \chi_1 - \chi_2 -\chi_3 & 0 \\ 0 & -\chi_2 + \chi_3 \end{pmatrix} \mbox{ a.e. in } \Omega.
\end{align}

We claim the following rigidity result for this problem:

\begin{prop}
\label{prop:stress_free_rigid}
Let $u\in W^{1,\infty}(\Omega, \R^2)$ be a solution to \eqref{eq:stress_free3}. Then, the following dichotomy holds:
\begin{itemize}
\item Either $\nabla u = A_1$ or $\nabla u \in \{A_2, A_3\}$ a.e. in $\Omega$.
\item If $\nabla u \in \{A_2, A_3\}$ a.e. in $\Omega$, then $u$ is (locally) a simple laminate.
\end{itemize}
\end{prop}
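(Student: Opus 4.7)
The plan is to exploit the forced diagonal structure of $\nabla u$ in order to decouple the dependencies of $u_1$ and $u_2$ on the two coordinates. Since $\nabla u$ only takes values in diagonal matrices, one immediately has $\partial_2 u_1 \equiv 0 \equiv \partial_1 u_2$ a.e.\ in $\Omega$. As $u \in W^{1,\infty}$ is in particular continuous, working locally on a ball contained in $\Omega$ (or globally on a connected component) this forces $u_1(x)=U_1(x_1)$ and $u_2(x)=U_2(x_2)$ for scalar Lipschitz functions $U_1, U_2$.

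From this decoupling, the first diagonal entry $\partial_1 u_1 = \chi_1-\chi_2-\chi_3 = 2\chi_1-1$ takes values in $\{-1,+1\}$ and depends only on $x_1$, so $\chi_1$ itself depends only on $x_1$. Similarly $\partial_2 u_2 = -\chi_2+\chi_3$ is a $\{-1,0,+1\}$-valued function of $x_2$ alone.

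The dichotomy now follows from a case distinction on the one-dimensional set $E:=\{x_1 : \chi_1(x_1)=1\}$. If $|E|>0$, then on the vertical strip $(E\times\R)\cap\Omega$ we have $\chi_2=\chi_3=0$, whence $\chi_3-\chi_2=0$ there; but since this quantity depends only on $x_2$, it vanishes for every $x_2$ lying on a horizontal line meeting $E$, and by connectedness of $\Omega$ this propagates to all of $\Omega$. Combined with $\chi_j\in\{0,1\}$ and $\sum_j \chi_j=1$, this forces $\chi_2=\chi_3=0$ and $\chi_1=1$ a.e., i.e.~$\nabla u \equiv A_1$. Otherwise $|E|=0$, so $\chi_1=0$ a.e., $\nabla u\in\{A_2,A_3\}$ a.e., and then $U_1(x_1)=-x_1+c_1$ is affine while $U_2'=\chi_3-\chi_2$ is a $\{-1,+1\}$-valued function of $x_2$ alone, exhibiting $u$ as a simple laminate between $A_2$ and $A_3$ with interfaces normal to $e_2$ (consistent with $A_2-A_3$ being rank one in direction $e_2$).

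The main subtle point is the propagation step: transforming the purely one-variable constraints on $\chi_1$ and on $\chi_3-\chi_2$ into a \emph{joint} conclusion on $\Omega$. On product-type domains this is immediate, while on a general open $\Omega$ one has to localize or appeal to connectedness, which is exactly why the second bullet of the proposition is phrased locally.
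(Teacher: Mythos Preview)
Your proof is correct and follows the same overall strategy as the paper: the diagonal structure forces $u_1=U_1(x_1)$ and $u_2=U_2(x_2)$, so $\chi_{1,1}$ depends only on $x_1$ and $\chi_{2,2}$ only on $x_2$, and a compatibility constraint between the two then forces the dichotomy. The one difference is in how that constraint is packaged. The paper writes the pointwise polynomial identity $\chi_{1,1}=1-2\chi_{2,2}^{2}$, which immediately reads as $f_1(x_1)=1-2f_2(x_2)^2$; since a function of $x_1$ equals a function of $x_2$ on a connected set, both sides are constant, and the two admissible values $f_1\in\{\pm1\}$ give the two alternatives directly. Your case-distinction and propagation argument reaches the same conclusion---indeed your implication $\chi_1=1 \Leftrightarrow \chi_3-\chi_2=0$ rewrites as $\mathbb{1}_E(x_1)=\mathbb{1}_{\{g=0\}}(x_2)$, which is again ``function of $x_1$ equals function of $x_2$''---so the difference is largely cosmetic at this qualitative level. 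It is worth noting, however, that the polynomial relation $\tilde\chi_{1,1}=g(\tilde\chi_{2,2})$ is exactly what drives the \emph{quantitative} lower bound later in the paper (it is the input to the commutator estimate of Proposition~\ref{prop:cone-red-nonl}); the paper's phrasing of the rigidity proof is deliberately chosen to foreshadow that mechanism.
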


\begin{proof}
The differential inclusion \eqref{eq:stress_free3} implies that $\p_2 u_1 = 0$ and $\p_1 u_2 = 0$ a.e. in $\Omega$. Hence, there exist functions $f_1$ and $f_2$ of a single variable each, such that
$$
\p_1 u_1=f_1(x_1)\in\{\pm1\}, \quad \p_2 u_2=f_2(x_2)\in\{0,\pm1\}.
$$
Now, since $\chi_{1,1}=1-2\chi_{2,2}^2$, we obtain that $f_1=1-2f_2^2$. Hence, due to the different $x_1$ and $x_2$ dependences, $f_1$ is constant.
Thus,
\begin{align*}
&\text{if } f_1=1 \quad\Rightarrow\quad f_2\equiv 0;\\
&\text{if } f_1=-1 \quad\Rightarrow\quad f_2\in\{\pm1\}.
\end{align*}
This concludes the proof.
\end{proof}

\subsubsection{Preliminaries for the lower bound}

We seek to deduce the lower bounds by mimicking the argument from the stress-free setting. However, we caution that the dichotomy arising from the differential inclusion yields important \emph{structural} information but \emph{not} directly the \emph{full} information. Only in combination with the Dirichlet boundary data, the desired scaling behaviour is obtained. This matches the observation that there is a \emph{different} scaling for the problem with imposed Dirichlet data and the periodic problem (with prescribed mean) if one considers data from the second lamination convex hull, see Section \ref{subsec:periodic3grad}.

As indicated our proof combines
\begin{itemize}
\item the Fourier multiplier ideas from \cite{RT21} (see also \cite{CO1,CO,KKO13}) in order to deduce structure on the Fourier support of the characteristic functions $\chi_j$, $j\in\{1,2,3\}$,
\item the bound from Lemma \ref{lem:axes_estimates} used to bound the correspondingly matching range of low frequencies.
\end{itemize}

We begin by recalling the notation from Section \ref{sec:prelim_lower}.
In the three-well setting from Theorem \ref{thm:K3}, the diagonal components of $\chi$ read
\begin{align*}
\chi_{1,1}:= \chi_1-\chi_2-\chi_3, \quad \chi_{2,2}:=-\chi_2+\chi_3
\end{align*}
and, denoting by $F$ the affine boundary datum, we write
$$
\tilde\chi_{1,1}=\chi_{1,1}-F_{1,1},
\quad
\tilde\chi_{2,2}=\chi_{2,2}-F_{2,2}.
$$
Moreover, the conical domains defined in \eqref{eq:cones-def} now are given by:
\begin{align*}
C_{1,\mu, \mu_2}:= \{k \in \Z^2: \ |k_2| \leq \mu|k|, \ |k|\leq \mu_2\}, \ C_{2,\mu,\mu_2}:= \{k\in \Z^2: \ |k_1| \leq \mu|k|, \ |k|\leq \mu_2\}.
\end{align*}
As above, to these cones we associate Fourier multipliers $\chi_{j,\mu,\mu_2}(D)$ which are determined as in \eqref{eq:mult} by $C^{\infty}(\R^2)$ functions, (essentially, in the same sense as above) zero-ho\-mo\-ge\-ne\-ous for $|k|\leq \mu_2$, non-negative, such that $\chi_{j,\mu,\mu_2}(k) = 1$ for $k\in C_{j,\mu,\mu_2}$ and  $\chi_{j,\mu,\mu_2}(k) = 0$ for $k\in \Z^2\setminus C_{j,2\mu,2\mu_2}$.

In the following, we will always presuppose the first localization result in the frequency space which is provided by Lemma \ref{lem:first_loc} and which in our three-well setting reads
\begin{equation}\label{eq:3grad1}
\|\tilde\chi_{1,1}-\chi_{1,\mu,\mu_2}(D)\tilde\chi_{1,1}\|_{L^2}^2+\|\tilde\chi_{2,2}-\chi_{2,\mu,\mu_2}(D)\tilde\chi_{2,2}\|_{L^2}^2 \lesssim \epsilon^{-2\alpha} (E_{\epsilon,3}(\chi,F) + \epsilon\Per(\Omega)),
\end{equation}
where $E_{\epsilon,3}(\chi,F):= \epsilon E_{surf}(\chi) + E_{el,3}(\chi,F)$ and where $\mu \sim \epsilon^{\alpha}$ and $\mu_2 \sim \epsilon^{2\alpha -1}$ (the choice of the prefactors will be made in the following steps).
In the sequel, we combine this localization with further reduction steps originating from the choice of the boundary data (and their order of lamination) and further nonlinear dependences.
We deal with the case of first order laminates and second order laminates separately, although the case of first order laminates (with affine boundary data) could essentially be reduced to the discussion in Section \ref{sec:Lp} with $p=2$.

\subsubsection{One order of lamination}

As an illustration of the tools and ideas which we are employing in this section in a simplified setting, we first present a Fourier-based proof of the lower bounds for boundary data $F$ involving first order laminates, i.e. of the lower bounds in Theorem \ref{thm:K3} (i).
We recall that the elements in $K^{(1)}_{3}\setminus K_{3}$ are of the form
$$
F=\begin{pmatrix}-1&0\\0&\mu\end{pmatrix}
$$
where $|\mu|<1$.

\begin{proof}[Proof of the lower bound in Theorem \ref{thm:K3} (i)]
In order to apply the low-frequency control of Lemma \ref{lem:axes_estimates}, we choose $\mu=\epsilon^
\alpha$, $\mu_2=\frac{1}{8}\epsilon^{2\alpha-1}$ and $\alpha=\frac{1}{3}$.
With these choices of parameters the truncated cones $C_{j,\mu,\mu_2}$, that are of thickness $8\mu\mu_2=1$, reduce to be one dimensional, namely $C_{j,2\mu,2\mu_2}=\{k_j e_j : |k_j|\le\frac{1}{4}\epsilon^{-\frac{1}{3}}\}$.
Lemma \ref{lem:axes_estimates} thus yields
\begin{equation}\label{eq:3grad2}
\sum_{|k_1|\le\frac{1}{4}\epsilon^{-\frac{1}{3}}}|\F\tilde\chi_{1,1}(k_1,0)|^2+\sum_{|k_2|\le\frac{1}{4}\epsilon^{-\frac{1}{3}}}|\F\tilde\chi_{2,2}(0,k_2)|^2\lesssim\epsilon^{-\frac{2}{3}}E_{el,3}(\chi,F).
\end{equation}

Hence \eqref{eq:3grad1} and \eqref{eq:3grad2} give

\begin{align}\label{eq:3grad3}
\begin{split}
\|\chi_{1,1}+1\|_{L^2}^2+\|\chi_{2,2}-\mu\|_{L^2}^2
& =|\F\tilde{\chi}_{1,1}(0,\dots,0)|^2 + |\F\tilde\chi_{2,2}(0,\dots,0)|^2\\
&\leq 2 \|\tilde\chi_{1,1}- \chi_{1,\mu,\mu_2}(D) \tilde{\chi}_{1,1}\|_{L^2}^2 + 2 \|\tilde\chi_{1,1}- \chi_{1,\mu,\mu_2}(D) \tilde{\chi}_{1,1}\|_{L^2}^2\\
& \quad + 2\sum_{|k_1|\le\frac{1}{4}\epsilon^{-\frac{1}{3}}}|\F\tilde\chi_{1,1}(k_1,0)|^2+2\sum_{|k_2|\le\frac{1}{4}\epsilon^{-\frac{1}{3}}}|\F\tilde\chi_{2,2}(0,k_2)|^2 \\
& \lesssim \epsilon^{-\frac{2}{3}} E_{el,3}(\chi,F) +\epsilon^{-\frac{2}{3}}(E_{\epsilon,3}(\chi,F) + \epsilon \Per(\Omega) ).
\end{split}
\end{align}
Exploiting the left-hand-side above, we obtain
\begin{align*}
\|\chi_{1,1}+1\|_{L^2}^2+\|\chi_{2,2}-\mu\|_{L^2}^2 
\ge\min\{(1-\mu)^2,(1+\mu)^2\}=\dist^2(F,K).
\end{align*}
Since for $F \in K^{(1)}\setminus K$ it holds that $|\mu|<1$,  we arrive at
$$
0<C(F)\leq \epsilon^{-\frac{2}{3}}(E_{\epsilon,3}(\chi,F) + \epsilon \Per(\Omega) ),
$$
which is the desired inequality after absorbing the perimeter contribution $\epsilon \Per(\Omega) $ into the left-hand-side of the above inequality by considering $\epsilon \in (0,\epsilon_0)$ with $\epsilon_0 = \epsilon_0(F,n)>0$ sufficiently small.
\end{proof}

\subsubsection{Two orders of lamination}
Next we consider the setting from Theorem \ref{thm:K3} (ii) and thus focus on boundary data $F$ of the form
$$
F=\begin{pmatrix}\mu&0\\0&0\end{pmatrix}
$$
with $|\mu|<1$.
For such $F$ we have $\tilde\chi_{1,1}=\chi_{1,1}-\mu$ and $\tilde\chi_{2,2}=\chi_{2,2}$.

In this setting, we combine one iteration of the nonlinear bootstrap argument from \cite{RT21} with the boundary data argument from Lemma \ref{lem:axes_estimates}. 
We make use of the analysis performed in Section \ref{sec:prelim_lower}.

\begin{proof}[Proof of the lower bound in Theorem \ref{thm:K3} (ii)]
As in the proof of Proposition \ref{prop:stress_free_rigid}, the crucial remark is that
\begin{equation}
\tilde\chi_{1,1}=1-\mu-2\tilde\chi_{2,2}^2=:g(\tilde\chi_{2,2}),
\end{equation}
that is the second (diagonal) component of $\tilde\chi$ determines the first one through a (nonlinear) polynomial relation $g$.
We can therefore improve the arguments used in the proof of Theorem \ref{thm:K3} (i) by reducing the conical Fourier multiplier $C_{2,2\mu,2\mu_2}$ on which $\F\tilde\chi_{1,1}$ concentrates its $L^2$ mass.
This is achieved by virtue of Proposition \ref{prop:cone-red-nonl}: for any $\gamma\in(0,1)$ there exists a constant $C>0$ depending on $\gamma$ and $g$ and a constant $M>0$ depending on the degree of $g$ such that
\begin{equation}\label{eq:3grad-cone-red}
\|\tilde\chi_{1,1}-\chi_{1,\mu,\mu_3}(D)\tilde\chi_{1,1}\|_{L^2}^2 \le \|\tilde\chi_{1,1}-\chi_{1,\mu,\mu_2}(D)\chi_{1,1}\|_{L^2}^2+ C\psi_{1-\gamma}\big(\|\tilde\chi_{2,2}-\chi_{2,\mu,\mu_2}(D)\chi_{2,2}\|_{L^2}^2\big),
\end{equation}
where $\mu_3=M\mu\mu_2$.
Here we take $\mu=\frac{1}{2}\epsilon^\alpha$ and $\mu_2=\frac{1}{4M}\epsilon^{2\alpha-1}$.
Collecting \eqref{eq:3grad1} and \eqref{eq:3grad-cone-red}, we obtain
\begin{align}\label{eq:3grad-cone-red2}
\|\tilde\chi_{1,1}-\chi_{1,\mu,\mu_3}(D)\tilde\chi_{1,1}\|_{L^2}^2 \le C \psi_{1-\gamma}\big(\epsilon^{-2\alpha}(E_{\epsilon,3}(\chi,F) + \epsilon \Per(\Omega) )\big).
\end{align}
We control the low-frequencies thanks to Lemma \ref{lem:axes_estimates}, that is
$$
\sum_{|k_1|\le 2M\epsilon^{-\alpha}}|\F\tilde\chi_{1,1}(k_1,0)|^2\lesssim \epsilon^{-2\alpha}E_{el,3}(\chi,F),
$$
which combined with \eqref{eq:3grad-cone-red2}, choosing $\alpha=\frac{1}{4}$ so that $8\mu\mu_3=1$ and thus $C_{1,2\mu,2\mu_3}=\{(k_1,0) : |k_1|\le2\mu_3=2M\epsilon^{-\frac{1}{4}}\}$, gives

\begin{align*}
\|\tilde\chi_{1,1}\|_{L^2}^2 
& \leq 2 \|\chi_{1,\mu,\mu_2}(D)\tilde{\chi_{1,1}}\|_{L^2}^2 + 2 \|\chi_{1,1}-\chi_{1,\mu,\mu_2}(D)\tilde{\chi_{1,1}}\|_{L^2}^2\\
&\le C \psi_{1-\gamma}\big(\epsilon^{-2\alpha}(E_{\epsilon,3}(\chi,F) + \epsilon \Per(\Omega) )\big)+C\epsilon^{-2\alpha}E_{el,3}(u,\chi).
\end{align*}
Eventually, inserting the definition of $\tilde{\chi}_{1,1}$ and recalling the definition of $J_1$ from \eqref{eq:aux_matr}, we infer
\begin{align*}
\dist^2(F,\{A_1,J_1\}) &=\min\{(1-\mu)^2,(1+\mu)^2\}\le\|\chi_{1,1}-\mu\|_{L^2}^2 = \|\tilde{\chi}_{1,1}\|_{L^2}^2 \\
&\le   C \psi_{1-\gamma}(\epsilon^{-\frac{1}{2}}(E_{\epsilon,3}(\chi,F) + \epsilon \Per(\Omega))) .
\end{align*}

Next, we distinguish two cases: If $\epsilon^{-\frac{1}{2}}(E_{\epsilon,3}(\chi,F) + \epsilon \Per(\Omega)))\geq 1$, then the last inequality turns into
\begin{align*}
\dist^2(F,\{A_1,J_1\}) &=\min\{(1-\mu)^2,(1+\mu)^2\}\le\|\chi_{1,1}-\mu\|_{L^2}^2 = \|\tilde{\chi}_{1,1}\|_{L^2}^2 \\
&\leq   C (\epsilon^{-\frac{1}{2}}(E_{\epsilon,3}(\chi,F) + \epsilon \Per(\Omega) ) .
\end{align*}
If on the other hand, $\epsilon^{-\frac{1}{2}}(E_{\epsilon,3}(\chi,F) + \epsilon \Per(\Omega)))< 1$, then, we obtain
\begin{align*}
\dist^{\frac{2}{1-\gamma}}(F,\{A_1,J_1\}) &=\min\{(1-\mu)^2,(1+\mu)^2\}\le\|\chi_{1,1}-\mu\|_{L^2}^2 = \|\tilde{\chi}_{1,1}\|_{L^2}^2 \\
&\le   C (\epsilon^{-\frac{1}{2}}(E_{\epsilon,3}(\chi,F) + \epsilon \Per(\Omega)) .
\end{align*}

Choosing $\epsilon \in (0,\epsilon_0)$ for $\epsilon_0=\epsilon_0(F,n,\gamma)>0$ small enough and absorbing the perimeter contribution into the left-hand-side, then concludes the proof of Theorem \ref{thm:K3}(ii).
\end{proof}

\subsection{The periodic setting}
\label{subsec:periodic3grad}

The situation is different if we allow for a weaker constraint on $\chi$, such as an imposed mean value in the periodic setting.
Specifically, consider $F\in K^{(2)}_{3}\setminus K^{(1)}_{3}$.
In the Dirichlet case minimizers involve two orders of laminations:
\begin{itemize}
\item (finest) inside the domain near the interface between incompatible states, i.e. $\chi_1$ and a mixture of $\chi_2$ and $\chi_3$;
\item (coarsest) at the boundary to attain boundary condition $F$.
\end{itemize}
Both oscillations occur at small length scales in order to minimize the energy.
In the periodic case, a mean value constraint does \emph{not} force the minimizers to match the constant gradient $F$ at the boundary. 
Therefore, fine oscillations appear only at the interface of different states and the minimal energy behaviour corresponds to that of the two-well problem with affine data which we make precise in the following results.

Let $E_{\epsilon,3}(u,\chi)$ be defined as in \eqref{eq:energyK3} and set
$$
E_\epsilon(\chi):=\inf\big\{ E_{\epsilon,3}(u,\chi) \,:\, u\in W_{loc}^{1,2}(\R^2;\R^2),\, \nabla u \mbox{ is } [0,1]^2\text{-periodic} \big\}
$$
for every $\chi_j\in BV(\T^2;\{0,1\})$ with $\chi_1+\chi_2+\chi_3\equiv1$.
Define also
$$
\theta_j=\int_{[0,1]^2}\chi_j, \quad j=1,2,3.
$$
We first show a quantification of the stress-free case from Proposition \ref{prop:stress_free_rigid} in the spirit of the articles \cite{CO,CO1,Rue16b}.

\begin{lem}\label{lem:periodic3grad}
For every $\chi$ as above and $\epsilon>0$ there holds
\begin{equation}\label{eq:lem_per}
\|\chi_{1,1}-\langle\chi_{1,1}\rangle\|_{L^2([0,1]^2)}^2+\|\chi_{2,2}-f_2\|_{L^2([0,1]^2)}^2 \lesssim \epsilon^{-\frac{2}{3}}E_\epsilon(\chi)
\end{equation}
where $f_2:[0,1]\to\{0,\pm1\}$ is a function of $x_2$ only satisfying the following dichotomy:
\begin{itemize}
\item[(i)] if $1-\theta_1\lesssim \epsilon^{-\frac{2}{3}}E_\epsilon(\chi)$, then  it is possible to choose $f_2\equiv0$;
\item[(ii)] if $\theta_1\lesssim \epsilon^{-\frac{2}{3}}E_\epsilon(\chi)$, then it is possible to choose $f_2\in\{\pm1\}$.
\end{itemize}
\end{lem}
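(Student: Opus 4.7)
My approach adapts the lower-bound strategy of Theorem~\ref{thm:K3}(ii) to the periodic setting. The low-frequency axis estimate of Lemma~\ref{lem:axes_estimates}, which crucially used Dirichlet data to rule out one-dimensional solutions, is no longer available. Instead, periodicity provides the mean-zero condition $\F\tilde\chi_{j,j}(0)=0$, and the pointwise identity
\begin{align*}
\chi_{1,1}=1-2\chi_{2,2}^2,
\end{align*}
arising from the mutual exclusivity of the $\chi_j$'s together with $\sum_j\chi_j\equiv 1$, substitutes for the nonlinear commutation step. An orthogonality argument from the separation of variables then replaces Proposition~\ref{prop:cone-red-nonl} and crucially avoids the $\psi_{1-\gamma}$ interpolation loss.

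I would first choose $\mu=c\epsilon^{1/3}$ and $\mu_2=c'\epsilon^{-1/3}$ with $c,c'$ small enough that $4\mu\mu_2<1$, so that every integer $k\in C_{j,2\mu,2\mu_2}$ lies on the $k_j$-axis. The periodic analogue of Lemma~\ref{lem:first_loc} (in which the $\Per(\Omega)$ term vanishes, since $\chi_j$ has no boundary jump when viewed on $\T^2$) then produces
\begin{align*}
\|\tilde\chi_{1,1}-\chi_{1,\mu,\mu_2}(D)\tilde\chi_{1,1}\|_{L^2}^2+\|\tilde\chi_{2,2}-\chi_{2,\mu,\mu_2}(D)\tilde\chi_{2,2}\|_{L^2}^2\lesssim\epsilon^{-2/3}E_\epsilon(\chi).
\end{align*}
Define $H_1(x_1):=\langle\chi_{1,1}\rangle_2(x_1)$ and $H_2(x_2):=\langle\chi_{2,2}\rangle_1(x_2)$; each takes values in $[-1,1]$. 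Since $\chi_{j,j}-H_j$ has Fourier support exactly on $\{k:k_{3-j}\ne 0\}$ and $\chi_{j,\mu,\mu_2}(k)$ vanishes on every integer point of that set, a Plancherel comparison gives
\begin{align*}
\|\chi_{j,j}-H_j\|_{L^2}^2\lesssim \epsilon^{-2/3}E_\epsilon(\chi),\qquad j\in\{1,2\}.
\end{align*}

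Using $|H_2|,|\chi_{2,2}|\le 1$ and the pointwise identity, I estimate
\begin{align*}
\|\chi_{1,1}-(1-2H_2^2)\|_{L^2}\le 2\|\chi_{2,2}+H_2\|_{L^\infty}\|\chi_{2,2}-H_2\|_{L^2}\lesssim (\epsilon^{-2/3}E_\epsilon(\chi))^{1/2}.
\end{align*}
Now $H_1$ depends only on $x_1$ while $1-2H_2^2$ depends only on $x_2$; their fluctuations are $L^2$-orthogonal on $\T^2$, so $\|H_1-\langle H_1\rangle\|_{L^2}^2\le \|H_1-(1-2H_2^2)\|_{L^2}^2$. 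Combining with the triangle inequality and $\langle H_1\rangle=\langle\chi_{1,1}\rangle$, I deduce $\|\chi_{1,1}-\langle\chi_{1,1}\rangle\|_{L^2}^2\lesssim \epsilon^{-2/3}E_\epsilon(\chi)$. Using $\chi_1=(1+\chi_{1,1})/2$ one has $\|\chi_{1,1}-\langle\chi_{1,1}\rangle\|_{L^2}^2=4\theta_1(1-\theta_1)$, so $\min\{\theta_1,1-\theta_1\}\lesssim \epsilon^{-2/3}E_\epsilon(\chi)$.

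In Case~(i) where $1-\theta_1\lesssim \epsilon^{-2/3}E_\epsilon(\chi)$ I take $f_2\equiv 0$ and use $\|\chi_{2,2}\|_{L^2}^2=\int\chi_{2,2}^2=1-\theta_1$. In Case~(ii) where $\theta_1\lesssim \epsilon^{-2/3}E_\epsilon(\chi)$ I set $f_2(x_2):=1$ if $H_2(x_2)\ge 0$ and $f_2(x_2):=-1$ otherwise. Since $\|\chi_{2,2}^2-1\|_{L^2}^2=\theta_1$ and $\|H_2^2-\chi_{2,2}^2\|_{L^2}^2\le 4\|H_2-\chi_{2,2}\|_{L^2}^2$, I obtain $\|H_2^2-1\|_{L^2}^2\lesssim \epsilon^{-2/3}E_\epsilon(\chi)$; the elementary inequality $(|H_2|-1)^2\le(H_2^2-1)^2$ (valid for $|H_2|\le 1$) and $|H_2-f_2|=1-|H_2|$ then give $\|H_2-f_2\|_{L^2}^2\lesssim \epsilon^{-2/3}E_\epsilon(\chi)$, which combined with Step~1 yields the bound on $\|\chi_{2,2}-f_2\|_{L^2}^2$. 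The main subtlety is insisting that $H_1,H_2$ be genuinely $L^\infty$-bounded: this is automatic for arithmetic slice-means but fails for the Fourier-multiplier outputs $\chi_{j,\mu,\mu_2}(D)\tilde\chi_{j,j}$, and the latter choice would force a $\psi_{1-\gamma}$-type loss via Proposition~\ref{prop:cone-red-nonl}, weaker than the linear right-hand side demanded by the lemma.
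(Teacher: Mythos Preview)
Your argument is correct and, in its first half, essentially mirrors the paper: choosing $\mu\sim\epsilon^{1/3}$, $\mu_2\sim\epsilon^{-1/3}$ so that the truncated cones collapse to the coordinate axes, obtaining $\|\chi_{j,j}-\langle\chi_{j,j}\rangle_{3-j}\|_{L^2}^2\lesssim\epsilon^{-2/3}E_\epsilon(\chi)$, and then using the relation $\chi_{1,1}=1-2\chi_{2,2}^2$ together with the $L^\infty$-Lipschitz bound and a separation-of-variables/orthogonality step to upgrade to $\|\chi_{1,1}-\langle\chi_{1,1}\rangle\|_{L^2}^2\lesssim\epsilon^{-2/3}E_\epsilon(\chi)$. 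Your orthogonality formulation is exactly the content of the paper's mean-subtraction identity $\|\langle\chi_{1,1}\rangle_2-\langle\chi_{1,1}\rangle\|_{L^2}\lesssim\|\langle\chi_{1,1}\rangle_2-g(\langle\chi_{2,2}\rangle_1)\|_{L^2}$.

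The genuine difference is in the construction of $f_2$. The paper first picks a good slice $f_2:=\chi_{2,2}(\tilde x_1,\cdot)\in\{0,\pm1\}$ with $\|\chi_{2,2}-f_2\|_{L^2}^2\le 2\|\chi_{2,2}-\langle\chi_{2,2}\rangle_1\|_{L^2}^2$, and then, in case~(ii), overwrites the zero set of $f_2$ by $1$ and checks the estimate persists via a set-splitting argument. You instead first record $\|\chi_{1,1}-\langle\chi_{1,1}\rangle\|_{L^2}^2=4\theta_1(1-\theta_1)$, which makes the dichotomy exhaustive, and then in case~(ii) take $f_2=\sign(H_2)\in\{\pm1\}$ directly, using $(|H_2|-1)^2\le(H_2^2-1)^2$ for $|H_2|\le1$ together with $\|H_2^2-1\|_{L^2}^2\lesssim\theta_1+\|\chi_{2,2}-H_2\|_{L^2}^2$. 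This bypasses the good-slice selection and the substitution argument of the paper's Step~2(ii), at the cost of not producing a single ``universal'' $f_2$ valued in $\{0,\pm1\}$ before the case split; since your first observation guarantees one of the two cases always applies, this is harmless.
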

\begin{proof}
We argue in two steps.

\emph{Step 1.}
Reasoning as in Lemma \ref{lem:Fourier} without the mean-value constraint we have
$$
\inf\big\{E_{el,3}(u,\chi) : u\in W^{1,2}_{loc}(\R^2;\R^2), \nabla u \text{ is } [0,1]^2\text{-periodic}\big\}\gtrsim \sum_{k\in\Z^2}\frac{|k_2|^2}{|k|^2}|\hat\chi(k)|^2+\frac{|k_2|^2}{|k|^2}|\hat\chi(k)|^2.
$$
Hence, we can rework Lemma \ref{lem:first_loc} to result in
\begin{equation}\label{eq:lem_per1}
\|\chi_{1,1}-\chi_{1,\mu,\mu_2}(D)\chi_{1,1}\|_{L^2}^2+\|\chi_{2,2}-\chi_{2,\mu,\mu_2}(D)\chi_{2,2}\|_{L^2}^2\lesssim \epsilon^{-2\alpha}E_\epsilon(\chi)
\end{equation}
with $\mu=\frac{1}{2}\epsilon^{\alpha}$ and $\mu_2=\frac{1}{4}\epsilon^{2\alpha-1}$ (where we note that in the periodic setting no additional perimeter contribution is needed in the high frequency bounds).
Since
$$
\sup_{k\in C_{1,2\mu,2\mu_2}}|k_{2}|=4\mu\mu_2=\frac{1}{2}\epsilon^{3\alpha-1},
\quad
\sup_{k\in C_{2,2\mu,2\mu_2}}|k_{1}|=4\mu\mu_2=\frac{1}{2}\epsilon^{3\alpha-1}
$$
with the choice $\alpha=\frac{1}{3}$ we have
$$
C_{1,2\mu,2\mu_2}\subset\Z\times\{0\},
\quad
C_{2,2\mu,2\mu_2}\subset\{0\}\times\Z.
$$
Hence, \eqref{eq:lem_per1} implies that
\begin{equation}\label{eq:lem_per1.5}
\|\chi_{1,1}-\langle\chi_{1,1}\rangle_2\|_{L^2}^2+\|\chi_{2,2}-\langle\chi_{2,2}\rangle_1\|_{L^2}^2\lesssim\epsilon^{-\frac{2}{3}}E_\epsilon(\chi).
\end{equation}

We can improve this estimate by exploiting the fact that $\chi_{1,1}=g(\chi_{2,2})$, where e.g. $g(t)=1-2t^2$. Indeed, notice that
\begin{align}
\label{eq:comm_a1}
\begin{split}
\|\langle\chi_{1,1}\rangle_2-\langle\chi_{1,1}\rangle\|_{L^2}^2 &= \|\langle\chi_{1,1}\rangle_2-\langle\langle\chi_{1,1}\rangle_2\rangle_1\|_{L^2}^2 \\
&\le \|\langle\chi_{1,1}\rangle_2-g(\langle\chi_{2,2}\rangle_1)-\langle\langle\chi_{1,1}\rangle_2-g(\langle\chi_{2,2}\rangle_1)\rangle_1\|_{L^2}^2 \\
&\lesssim \|\langle\chi_{1,1}\rangle_2-g(\langle\chi_{2,2}\rangle_1)\|_{L^2}^2,
\end{split}
\end{align}
where in the last step we have used the general fact that $\|v-\langle v\rangle\|_{L^2}^2\le4\|v\|_{L^2}^2$ (recalling that all the integrals are in $[0,1]^2$).
Then by the triangle inequality, \eqref{eq:lem_per1.5}, \eqref{eq:comm_a1} and the local Lipschitz regularity of $g$ yield
\begin{align*}
\|\chi_{1,1}-\langle\chi_{1,1}\rangle\|_{L^2}^2 &\lesssim \|\chi_{1,1}-\langle\chi_{1,1}\rangle_2\|_{L^2}^2+\|\langle\chi_{1,1}\rangle_2-\langle\chi_{1,1}\rangle\|_{L^2}^2 \\
&\lesssim \|\chi_{1,1}-\langle\chi_{1,1}\rangle_2\|_{L^2}^2+\|g(\chi_{2,2})-g(\langle\chi_{2,2}\rangle_1)\|_{L^2}^2 \lesssim \epsilon^{-\frac{2}{3}} E_\epsilon(\chi).
\end{align*}

Since for every integrable function we can find a point in which it is lower than its mean we can find $\tilde x_1,\tilde x_2\in(0,1)$ for which
\begin{align*}
\|\chi_{2,2}(\tilde x_1,\cdot)-\langle\chi_{2,2}\rangle_1\|_{L^2}^2&\le\|\chi_{2,2}-\langle\chi_{2,2}\rangle_1\|_{L^2}^2.
\end{align*}
Using the notation $f_2=\chi_{2,2}(\tilde x_1,\cdot)$ and applying the triangle inequality, we get
\begin{equation}\label{eq:lem_per2}
\|\chi_{1,1}-\langle\chi_{1,1}\rangle\|_{L^2}^2+\|\chi_{2,2}-f_2\|_{L^2}^2\lesssim\epsilon^{-\frac{2}{3}}E_\epsilon(\chi).
\end{equation}
We remark that $f_2\in\{0,\pm1\}$ and thus \eqref{eq:lem_per} is proved.

\emph{Step 2.}
It remains to prove the dichotomy on $f_2$.
On the one hand, point (i) is immediate, once we recall that $1-\theta_1=\theta_2+\theta_3=\|\chi_{2,2}\|_{L^2}^2$.
On the other hand, in order to prove (ii), assume that $\theta_1\lesssim\epsilon^{-\frac{2}{3}}E_\epsilon(\chi)$.
We show that if $|\{f_2=0\}|>0$, on this set, we can substitute $f_2$ with some non-zero $\tilde f_2$ in the estimate \eqref{eq:lem_per}.
Indeed, let
$$
\tilde f_2=\begin{cases}
f_2 & \mbox{ if } f_2\neq 0, \\
1 & \mbox{ if } f_2=0.
\end{cases}
$$
Since $\theta_1=|\{\chi_{2,2}=0\}|$, denoting with $\Omega'$ the support of $\chi_{2,2}$, we obtain
$$
\|\chi_{2,2}-\tilde f_2\|_{L^2}^2
\lesssim\|\chi_{2,2}-\tilde f_2\|_{L^2(\Omega')}^2+\|\chi_{2,2}-\tilde{f}_2\|_{L^2(\Omega\setminus  \Omega')}^2 \\
\lesssim \|\chi_{2,2}-\tilde f_2\|_{L^2(\Omega')}^2 + 
\epsilon^{-\frac{2}{3}}E_\epsilon(\chi).
$$
Here, in the last estimate, we have used that, by assumption, 
\begin{align*}
\|\chi_{2,2}-\tilde{f}_2\|_{L^2(\Omega\setminus  \Omega')}^2 = \|\tilde{f}_2\|_{L^2(\Omega\setminus  \Omega')}^2 \leq C |\Omega\setminus  \Omega'| = C \theta_1 \leq \epsilon^{-\frac{2}{3}}E_\epsilon(\chi).
\end{align*} 
Now, from the definition of $\tilde f_2$ we get
\begin{align*}
\|\chi_{2,2}-\tilde f_2\|_{L^2(\Omega')}^2 &=\|\chi_{2,2}-\tilde f_2\|_{L^2(\Omega'\cap\{f_2\neq0\})}^2+\|\chi_{2,2}-\tilde f_2\|_{L^2(\Omega'\cap\{f_2=0\})}^2 \\
&=\|\chi_{2,2}-f_2\|_{L^2(\Omega'\cap\{f_2\neq0\})}^2+\|\chi_{2,2}-1\|_{L^2(\Omega'\cap\{f_2=0\})}^2\le4\|\chi_{2,2}-f_2\|_{L^2(\Omega')}^2.
\end{align*}
Here, in order to bound the last contribution, we have used that 
\begin{align*}
\|\chi_{2,2}-1\|_{L^2(\Omega'\cap\{f_2=0\})} \leq 4 |\Omega'\cap\{f_2=0\}| = 4 \|\chi_{2,2} - f_2\|_{L^2(\Omega'\cap\{f_2=0\})}.
\end{align*}
Combining the two inequalities above the result is proven.
\end{proof}

\begin{rmk}
Lemma \ref{lem:periodic3grad} quantifies the stress-free case in the sense that if $E_\epsilon=o(\epsilon^\frac{2}{3})$, let $u_\epsilon$ be a minimizing sequence, then we have either $\|\nabla u_\epsilon-A_1\|_{L^2}=o_\epsilon(1)$ or
$$
\Big\|\nabla u_\epsilon-\begin{pmatrix}-1&0\\0&f_2\end{pmatrix}\Big\|_{L^2}=o_\epsilon(1).
$$
\end{rmk}

We now have the tools to prove the following energy scaling behaviour result under mean value constraint.

\begin{prop}
Let $F\in K^{(2)}_{3}\setminus K_3^{(1)}$ and let
$$
E_\epsilon(F)=\inf_\chi \inf\big\{ E_{\epsilon,3}(u,\chi) \,:\, u\in W^{1,2}(\T^2;\R^2),\, \nabla u \mbox{ is } [0,1]^2\text{-periodic},\, \langle\nabla u\rangle=F \big\},
$$
then
$$
\epsilon^\frac{2}{3}\lesssim E_\epsilon(F)\lesssim\epsilon^\frac{2}{3}.
$$
\end{prop}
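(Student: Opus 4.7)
The plan is to treat upper and lower bounds separately, exploiting two features of the periodic setting: (a) affine boundary conditions need not be enforced at the outer boundary, so no branching layer is needed there, and (b) the only remaining obstruction comes from a single interior interface between the stress-free state $A_1$ and a fine $(A_2,A_3)$-mixture approximating $J_1:=\tfrac12 A_2+\tfrac12 A_3=-A_1$. Since $A_1-J_1=2e_1\otimes e_1$ is rank-one in direction $e_1$, only one branching layer suffices and the expected scaling is the familiar $\epsilon^{2/3}$ of the first-order two-well problem.

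\emph{Upper bound.} Writing $F=\lambda A_1+(1-\lambda)J_1$ for the appropriate $\lambda\in(0,1)$, I partition $[0,1]^2$ in the $x_1$-direction into $R_1=[0,\lambda]\times[0,1]$, on which I set $\chi\equiv A_1$ and $u(x)=A_1 x+c$, and $R_2=[\lambda,1]\times[0,1]$, on which I place the branching construction of Lemma \ref{prop:vert-bra} (with swapped roles of $x_1$ and $x_2$) with $A=A_2$, $B=A_3$, the lemma's $F$ equal to $J_1$, $L=1$, $H=1-\lambda$ and $N\sim\epsilon^{-1/3}$. Both constructions produce traces that are constant in $x_2$ on $\{x_1=\lambda\}$ and on $\{x_1=1\}$; a suitable choice of the shift vector $b$ in the branching construction then ensures continuity across the inner interface $\{x_1=\lambda\}$ and periodicity modulo $Fe_1=(\mu,0)^T$ across $\{x_1=0\}\sim\{x_1=1\}$, while periodicity in $x_2$ is automatic since the traces of $J_1 x+b$ on $\{x_2=0\}$ and $\{x_2=1\}$ coincide. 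Lemma \ref{prop:vert-bra} then bounds the elastic and surface contributions from $R_2$ by $N^{-2}+\epsilon N\sim\epsilon^{2/3}$, while $R_1$ contributes no elastic energy and only an $O(\epsilon)$ surface term from $\|D\chi_1\|_{TV}$ along the internal interface.

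\emph{Lower bound.} The plan is to combine Lemma \ref{lem:periodic3grad} with the mean-value constraint $\langle\nabla u\rangle=F$. Since $\chi_{1,1}\in\{-1,1\}$ almost everywhere, setting $m_1:=\langle\chi_{1,1}\rangle$ a direct computation yields
\begin{equation*}
\|\chi_{1,1}-m_1\|_{L^2([0,1]^2)}^2=1-m_1^2,
\end{equation*}
so Lemma \ref{lem:periodic3grad} gives $1-m_1^2\lesssim \epsilon^{-2/3}E_\epsilon(\chi)$. On the other hand, Jensen's inequality applied to an admissible pair $(u,\chi)$ produces $|m_1-\mu|^2=|\langle\chi_{1,1}-\partial_1 u_1\rangle|^2\le E_{el,3}(u,\chi)\le E_\epsilon(\chi)$. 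Hence, provided $E_\epsilon(\chi)\le\delta_0$ for some small $\delta_0(\mu)>0$, one has $|m_1|\le(1+|\mu|)/2<1$ and thus $1-m_1^2\ge(1-\mu^2)/2$; combined with the previous display this yields $E_\epsilon(\chi)\gtrsim(1-\mu^2)\epsilon^{2/3}$. In the opposite regime the same lower bound holds trivially once $\epsilon<\epsilon_0(\mu)$. Infimising over admissible $\chi$ and $u$ concludes the argument.

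\emph{Main obstacle.} The main delicate point is the upper bound: Lemma \ref{prop:vert-bra} prescribes an affine boundary datum on the entire rectangle $R_2$, so one must verify that (i) the constant-in-$x_2$ traces of $J_1 x+b$ and $A_1 x+c$ on $\{x_1=\lambda\}$ match via an appropriate choice of constants $b$ and $c$; (ii) the matching across $\{x_1=0\}\sim\{x_1=1\}$ is compatible with the periodicity twist $Fe_1$; and (iii) the $\|D\chi_1\|_{TV}$-contribution along the internal interface is only of order $\epsilon$, hence negligible compared to the $\epsilon^{2/3}$ branching contribution.
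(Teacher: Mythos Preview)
Your proposal is correct and follows essentially the same approach as the paper's proof. For the upper bound you use the identical construction: $u=A_1 x$ on $[0,\lambda]\times[0,1]$ glued to a first-order branching between $A_2$ and $A_3$ (via Lemma~\ref{prop:vert-bra}) on $[\lambda,1]\times[0,1]$; you are in fact more careful than the paper in verifying periodicity across $\{x_1=0\}\sim\{x_1=1\}$. For the lower bound you use the same two ingredients---Lemma~\ref{lem:periodic3grad} and Jensen for the mean constraint---with only a cosmetic difference: you compute $\|\chi_{1,1}-\langle\chi_{1,1}\rangle\|_{L^2}^2=1-m_1^2$ explicitly and then do a small-energy case distinction, whereas the paper combines both estimates through the triangle inequality to bound $\|\chi_{1,1}-F_{1,1}\|_{L^2}^2\geq\min\{|F_{1,1}\pm 1|^2\}$ directly.
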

\begin{proof}
Jensen's inequality implies
\begin{equation}\label{eq:prop_per}
|\langle\chi_{1,1}\rangle-F_{1,1}|^2\le E_{el}(\chi).
\end{equation}
This combined with \eqref{eq:lem_per} and the triangle inequality yields
$$
\min\{|F_{1,1}+1|^2,|F_{1,1}-1|^2\}\lesssim \epsilon^{-\frac{2}{3}}E_\epsilon(\chi)+E_{el}(\chi),
$$
which proves the lower bound.

Regarding the upper bound, let $F=\lambda A_1+(1-\lambda)J_1$ with $J_1$ as in \eqref{eq:aux_matr}.
We consider 
$$
u_\epsilon(x)=\begin{cases}
A_1x & x\in[0,\lambda]\times[0,1], \\
v_\epsilon(x) & x\in[\lambda,1]\times[0,1],
\end{cases}
$$
where $v_\epsilon$ is a branching construction given by Lemma \ref{prop:vert-bra} with $A=A_2, B=A_3, F=J_1$, $p=2$ and $R=[\lambda,1]\times[0,1]$.
\end{proof}

Note that if $F\in K^{(1)}_{3}\setminus K_3$ it is well known that $
E_\epsilon(F) \sim \epsilon$ since in this case only a single twin is necessary.

\section{Four Wells: Proof of Theorem \ref{thm:K4}}
\label{sec:proof4_wells_lower}
In this section, we discuss the three-dimensional problem involving four gradients which had been introduced in Section \ref{sec:fourthintro}. 

Using the observation from Lemma \ref{lem:Fourier}, we note that in this setting we have the following Fourier characterization of the elastic energy:
For every $F\in \diag(3,\R)$ we have
$$
E_{el,3}(\chi,F)=\sum_{k\in\Z^3} \frac{k_2^2+k_3^2}{|k|^2}|\F \tilde \chi_{1,1}|^2+\frac{k_1^2+k_3^2}{|k|^2}|\F \tilde \chi_{2,2}|^2+\frac{k_1^2+k_2^2}{|k|^2}|\F \tilde \chi_4|^2.
$$
We emphasize that for the specific matrices in $K_4$ we have that $\chi_{3,3}=\chi_4$ and that in analogy with the notation from \eqref{eq:char_func_norm} we will also set $\tilde{\chi}_4 = \tilde{\chi}_{3,3} = \chi_{3,3}-F_{3,3}$ (often also viewed as a function on $\T^3$ by considering the periodic extension).

The section again consists of two parts: In Section \ref{sec:4wells_lower} we first prove the lower bounds from Theorem \ref{thm:K4}, then in Section \ref{sec:proof4_wells_upper}, we deduce the corresponding upper bounds.

\subsection{Lower bound}
\label{sec:4wells_lower}
In this section we prove the lower bounds from Theorem \ref{thm:K4}. We begin by discussing boundary data which are three-fold laminates.

As in Section \ref{sec:prelim_lower}, for $j\in\{1,\dots,n\}$, $\mu\in (0,1)$, and $\mu_m>0$ we define
$$
C_{j,\mu,\mu_m}=\{k\in \Z^3 \,:\, |k|^2-k_j^2\le\mu^2|k|^2,\, |k|\le\mu_m\}, \quad j=1,2,3.
$$
As a consequence of Lemma \ref{lem:first_loc} with $\mu_2 \sim \epsilon^{2\alpha-1}$ and $\mu \sim \epsilon^{\alpha}$, we obtain that
\begin{align}
\label{eq:4_initial}
\begin{split}
&\|\tilde\chi_{1,1}-\chi_{1,\mu,\mu_2}(D)\tilde{\chi}_{1,1}\|_{L^2}^2+\|\tilde\chi_{2,2}-\chi_{2,\mu,\mu_2}(D)\tilde{\chi}_{2,2}\|_{L^2}^2+\|\tilde\chi_4-\chi_{3,\mu,\mu_2}(D)\tilde{\chi}_4\|_{L^2}^2 \\
&\lesssim \epsilon^{-2\alpha} (E_{\epsilon,4}(\chi,F) + \epsilon \Per(\Omega)).
\end{split}
\end{align}
With this in hand, we present the proof of the lower bound in Theorem \ref{thm:K4} in the case that $F \in K^{(3)}_{4}\setminus K^{(2)}_4$:

\begin{proof}[Proof of the lower bound in Theorem \ref{thm:K4} for $F\in K^{(3)}_{4}\setminus K^{(2)}_4$]

We argue in three steps.\\

\emph{Step 1: $\tilde{\chi}_{2,2} + 2\tilde{\chi}_{4}$ determines $\tilde{\chi}_{1,1}$.} We first observe that $\tilde{\chi}_{1,1} = f(\tilde{\chi}_{2,2} + 2 \tilde{\chi}_4)$, where $f$ is a polynomial of degree three.
Now $\supp(\F \tilde\chi_{2,2}+2\F\tilde\chi_4)$ concentrates on
$$
S_3(C_{2,\tilde{\mu},\tilde{\mu}_2}\cup C_{3,\mu,\mu_2})\subset\{k\in \Z^3\,:\, |k_1|\lesssim \epsilon^{3\alpha-1}\}.
$$
Thus, Proposition \ref{prop:cone-red-nonl} and \eqref{eq:4_initial} with the choices of the prefactors $\mu_2 =\frac{1}{8\tilde MM} \epsilon^{2\alpha-1}$ (where $\tilde M$ and $M$ are the constants defined in the following lines) and $\mu = \epsilon^{\alpha}$ imply
\begin{align*}
&\|\tilde{\chi}_{1,1}-{\chi}_{1,\mu,\mu_3}(D)\tilde{\chi}_{1,1}\|_{L^2}^2+\|\tilde{\chi}_{2,2}-{\chi}_{2,\mu,\mu_2}(D)\tilde{\chi}_{2,2}\|_{L^2}^2+\|\tilde{\chi}_4-{\chi}_{3,\mu,\mu_2}(D)\tilde{\chi}_4\|_{L^2}^2 \\
&\le C \psi_{1-\gamma}(\epsilon^{-2\alpha} (E_{\epsilon,4}(\chi,F) + \epsilon \Per(\Omega))),
\end{align*}
where $C>0$ is a constant depending on $\gamma$ and $\mu_3=M\mu\mu_2$ for some constant $M>1$. 

\emph{Step 2: ${\tilde{\chi}}_{1,1}$ determines ${\tilde{\chi}}_{4}$.} Using that $\tilde{\chi}_4=g(\tilde{\chi}_{1,1})$ (with $g(t)=1-t^2$) and invoking Corollary \ref{cor:cone-red-nonl}, we obtain
\begin{equation}
\label{eq:reduction_almost}
\|\tilde{\chi}_4-{\chi}_{3,\mu,\mu_4}(D)\tilde{\chi}_4\|_{L^2}^2 \le C \psi_{(1-\gamma)^2}(\epsilon^{-2\alpha} (E_{\epsilon,4}(\chi,F) + \epsilon \Per(\Omega)) ),
\end{equation}
with $\mu_4=\tilde M\mu\mu_3=\tilde M M\mu^2\mu_2$ for some constant $\tilde M>1$.
We note that the width of the cone $C_{3,2\mu,2\mu_4}$ equals $2\max_{k\in C_{3,2\mu,2\mu_4}}\{|k_1|,|k_2|\}\le8\mu\mu_4=\frac{1}{2}\epsilon^{5\alpha-1}$.
Hence, choosing $\alpha=\frac{1}{5}$, we have
$$
C_{3,2\mu,2\mu_4}=\Big\{(0,0,k_3) \,:\, k_3\in\Z, |k_3|<2\mu_4=\frac{1}{4}\epsilon^{-\frac{1}{5}}\Big\}.
$$
In other words, $\tilde{\chi}_4$ is essentially one-dimensional.

\emph{Step 3: Conclusion.}
We conclude the argument by invoking the three-dimensional version of Lemma \ref{lem:axes_estimates}. This yields that
\begin{align*}
\|{\chi}_{3,\mu,\mu_4}(D)\tilde{\chi}_4\|_{L^2}^2 = \sum_{|k_3|\le\frac{1}{4}\epsilon^{-\frac{1}{5}}}|\F\tilde\chi_4(0,0,k_3)|^2 \lesssim \epsilon^{-\frac{2}{5}} E_{\epsilon,4}(\chi,F).
\end{align*}
As a consequence, \eqref{eq:reduction_almost} reduces to
\begin{align*}
\| \tilde{\chi}_4 \|_{L^2}^2 \leq C \psi_{(1-\gamma)^2}(\epsilon^{-\frac{2}{5}}(E_{\epsilon,4}(\chi,F) + \epsilon \Per(\Omega))).
\end{align*}
Last but not least, we observe that $\tilde{\chi}_4 = \chi_4 - F_{3,3}$ and that $0<F_{3,3}<1$ (since $F \in  K^{(3)}_{4}\setminus K^{(2)}_4$) and thus
\begin{align*}
0< \min\{|F_{3,3}|^2,|1-F_{3,3}|^2\} \leq \| \chi_4 - F_{3,3} \|_{L^2}^2 \leq C \psi_{(1-\gamma)^2}(\epsilon^{-\frac{2}{5}}(E_{\epsilon,4}(\chi,F) + \epsilon \Per(\Omega))).
\end{align*}
Together with a case distinction, considering the cases $(E_{\epsilon,4}(\chi,F) + \epsilon \Per(\Omega))\geq 1$ and $(E_{\epsilon,4}(\chi,F) + \epsilon \Per(\Omega))<1$ separately, and an absorption argument for the perimeter contribution as in the previous sections, this implies the claim.
\end{proof}

The lower order lamination bounds follow analogously. We only discuss the proof in the case of second order laminates (for the first order case this is essentially identical as in the argument in the previous section).

\begin{proof}[Proof of the lower bound in Theorem \ref{thm:K4} for $F\in K^{(2)}_{4}\setminus K^{(1)}_4$]
We first note that for $F \in F\in K^{(2)}_{4}\setminus K^{(1)}_4$ we have that $F_{1,1}\in (-1,1)$. With this we argue as in the previous proof: Again using that $\tilde{\chi}_{2,2}+2\tilde\chi_{3,3}$ determines $\tilde{\chi}_{1,1}$, from Proposition \ref{prop:cone-red-nonl}, we deduce that 
\begin{align*}
\|\tilde{\chi}_{1,1}-{\chi}_{1,\mu,\mu_3}(D)\tilde{\chi}_{1,1}\|_{L^2}^2 \le C \psi_{1-\gamma}(\epsilon^{-2\alpha} (E_{\epsilon,4}(\chi,F) + \epsilon \Per(\Omega))),
\end{align*}
for $\mu_3 = M\mu\mu_2$.
With the correct choice of the prefactors, the cone $C_{1,2\mu,2\mu_3}$ has width $\epsilon^{4\alpha-1}$.
Thus we choose $\alpha = \frac{1}{4}$ and hence, by Lemma \ref{lem:axes_estimates} we obtain that 
\begin{align*}
\|{\chi}_{1,\mu,\mu_3}(D)\tilde{\chi}_{1,1}\|_{L^2}^2 \lesssim \epsilon^{-\frac{1}{2}}E_{\epsilon,4}(\chi,F).
\end{align*}
Therefore, since for $F\in \inte(K^{(2)}_{4})$ we have $-1<F_{1,1}<1$,
\begin{align*}
0&< \min\{|F_{1,1}-1|^2,|F_{1,1}+1|^2\}\leq \|\chi_{1,1}-F_{1,1}\|_{L^2}^2 \leq \|\tilde{\chi}_{1,1}\|_{L^2}^2\\
& \le C \psi_{1-\gamma}(\epsilon^{-\frac{1}{2}} (E_{\epsilon,4}(\chi,F) + \epsilon \Per(\Omega))).
\end{align*}
Rearranging then concludes the proof.
\end{proof}

\subsection{Upper bounds}
\label{sec:proof4_wells_upper}
Here we define a construction which consists of three levels of branching related to the set $K_4$ defined in \eqref{eq:K4}, with boundary datum $F=\lambda A_4$, $\lambda\in(0,1)$.
This will prove the upper bounds stated in Theorem \ref{thm:K4}.

\subsubsection{Lower-order constructions}

We first premise some partial results that are needed to produce a three-dimensional branching construction of the third order.
We begin by defining (branched) simple laminate constructions of controlled energy and a fixed direction of lamination. We will refer to these constructions as ``one-dimensional branching constructions''.

\begin{lem}\label{lem:3D-1O-app}
Let $\tilde A,\tilde B\in \diag(3,\R)$ be such that $\tilde A-\tilde B=c e_m\otimes e_m$ with $m\in \{1,2\}$. Let $F=\lambda \tilde A+(1-\lambda)\tilde B$ for some $0<\lambda<1$ and let
$$
\omega=[0,\tilde{\ell}_1]\times[0,\tilde{\ell}_2]\times[0,\tilde{\ell}_3], \quad V=\tilde{\ell}_1 \tilde{\ell}_2 \tilde{\ell}_3,
$$
with $0<\tilde{\ell}_1,\tilde{\ell}_2,\tilde{\ell}_3\le1$.
Then for every $0<\rho<\frac{\tilde{\ell}_3}{4}$ if $m=1$, $0<\rho<\frac{\tilde{\ell}_1}{4}$ if $m=2$, and $\epsilon\in(0,1)$ there exists $v\in W^{1,\infty}(\omega;\R^3)$ such that $\nabla v\in BV(\omega;\R^{3\times 3})$, $v(x)=Fx$ for every $x\in\partial\omega$ and satisfying, if $m=1$
\begin{equation}\label{eq:3D-1O-app1}
\int_\omega\dist^2(\nabla v,\{\tilde A,\tilde B\})dx+\epsilon\|D^2v\|_{TV(\omega)} \lesssim |c|^2V \Big( \frac{1}{\tilde{\ell}_3^2} \rho^2+\frac{1}{\tilde{\ell}_2}\rho+\frac{\epsilon}{\rho}\Big),
\end{equation}
if $m=2$
\begin{equation}\label{eq:3D-1O-app2}
\int_\omega\dist^2(\nabla v,\{\tilde A,\tilde B\})dx+\epsilon\|D^2v\|_{TV(\omega)} \lesssim |c|^2V \Big( \frac{1}{\tilde{\ell}_1^2} \rho^2+\frac{1}{\tilde{\ell}_3}\rho+\frac{\epsilon}{\rho}\Big).
\end{equation}
\end{lem}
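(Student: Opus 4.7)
The plan is a dimensional reduction: I would perform a two-dimensional branching construction in a suitable coordinate plane, extrude it in the remaining direction, and patch in the affine boundary data via two thin interpolation layers in the extruded direction. By the symmetry of the two cases under interchange of $x_1$ and $x_2$, it suffices to treat $m=1$; for $m=2$ the same argument applies after swapping these coordinates, which is precisely why the geometric factor changes from $1/\tilde\ell_3^2$ to $1/\tilde\ell_1^2$ and the boundary layer moves from the $e_2$-faces to the $e_3$-faces.

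For $m=1$, I would first apply Lemma \ref{prop:vert-bra} in the $(x_1,x_3)$-plane with $A=\diag(\tilde A_{11},\tilde A_{33})$, $B=\diag(\tilde B_{11},\tilde B_{33})$, $F_{2D}=\diag(F_{11},F_{33})$, $L=\tilde\ell_1$, $H=\tilde\ell_3$ and $N\sim\tilde\ell_1/\rho$. The hypothesis $N>4L/H$ reduces exactly to the standing assumption $\rho<\tilde\ell_3/4$. This provides a map $w\in W^{1,\infty}([0,\tilde\ell_1]\times[0,\tilde\ell_3];\R^2)$ which equals $F_{2D}(x_1,x_3)^T+b'$ on the 2D boundary and whose 2D energy is $\lesssim |c|^2\tilde\ell_1\rho^2/\tilde\ell_3+|c|\epsilon\tilde\ell_1\tilde\ell_3/\rho$.

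Next, I would extrude $w$ by setting $\tilde v(x_1,x_2,x_3):=(w_1(x_1,x_3),\,F_{22}x_2,\,w_2(x_1,x_3))$ on the interior slab $\{x_2\in[\rho,\tilde\ell_2-\rho]\}$. The gradient of $\tilde v$ differs from $\{\tilde A,\tilde B\}$ only through the planar error of $w$, and $\tilde v$ already matches $Fx$ (up to an additive constant) on the lateral faces where $x_1$ or $x_3$ is extremal; however on the two $e_2$-faces it equals the 2D branching pattern rather than $Fx$. Crucially, the $L^\infty$ discrepancy $w_1(x_1,x_3)-F_{11}x_1$ is controlled by the amplitude of the finest building block in the branching construction of Lemma \ref{prop:vert-bra}, which is of order $|c|\rho$ since the smallest horizontal scale $\ell_{j_0}$ is $\sim\rho$.

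To correct the boundary data on the remaining two faces, I would insert two slabs of width $\rho$ near $x_2=0$ and $x_2=\tilde\ell_2$ and define $v$ there as the affine interpolation in $x_2$ between $Fx$ on the outer face and $\tilde v$ on the inner face, e.g.\ $v(x):=(x_2/\rho)\tilde v(x_1,\rho,x_3)+(1-x_2/\rho)Fx$ for $x_2\in[0,\rho]$. This introduces an extra $\partial_2 v$ of size at most $|c|\rho/\rho=|c|$, so the elastic error in each slab is bounded by $|c|^2\cdot\tilde\ell_1\tilde\ell_3\rho=|c|^2V\rho/\tilde\ell_2$, while the new jump interface at $x_2=\rho$ adds $\lesssim|c|\epsilon\tilde\ell_1\tilde\ell_3$ to the surface energy, which is absorbed by the existing contribution. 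Summing the interior elastic part $|c|^2V\rho^2/\tilde\ell_3^2$, the boundary-layer elastic part $|c|^2V\rho/\tilde\ell_2$ and the surface part $|c|^2\epsilon V/\rho$ yields \eqref{eq:3D-1O-app1}. The principal technical obstacle is the $L^\infty$-deviation estimate $\|w-F_{2D}x\|_{L^\infty}\lesssim|c|\rho$, since this is precisely what permits the $\rho$-boundary layer to incur only a bounded gradient error; it follows from an inspection of the self-similar construction in Lemma \ref{prop:vert-bra}, using that the finest block has horizontal size $\sim\rho$.
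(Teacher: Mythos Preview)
Your approach is essentially the same as the paper's: apply Lemma~\ref{prop:vert-bra} in the $(x_1,x_3)$-plane with $N\sim\tilde\ell_1/\rho$, extrude trivially in $x_2$, and cut off in a slab of width $\rho$ near each $e_2$-face to restore the affine boundary data. The paper is terser about the cut-off step (it simply writes ``up to a cut-off of scale $\rho$'' and records the resulting elastic contribution $\rho\,\tilde\ell_1\tilde\ell_3$), whereas you spell out the linear interpolation and the $L^\infty$ estimate that justifies it.

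One small slip: the bound $\|w-F_{2D}x\|_{L^\infty}\lesssim|c|\rho$ is governed by the \emph{coarsest} building block of the branching construction, namely $\ell_0=L/N\sim\rho$, not by the finest scale $\ell_{j_0}$ (which is strictly smaller than $\rho$). The conclusion you draw from it is nonetheless correct.
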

\begin{proof}
Without loss of generality, we consider the case $m=1$.
Let $\tilde v$ be given by Lemma \ref{prop:vert-bra} with $L=\tilde{\ell}_1$, $H=\tilde{\ell}_3$ and $N=\frac{\tilde{\ell}_1}{\rho}$, $A=\diag(\tilde A_{1,1},\tilde A_{3,3})$, $B=\diag(\tilde B_{1,1},\tilde B_{3,3})$.
Then we define $v\in W^{1,\infty}(\omega, \R^3)$ by setting $v_2(x)=\tilde A_{2,2}x_2$ and $(v_1,v_3)(x_1,x_2,x_3)=\tilde v(x_1,x_3)$ up to a cut-off of scale $\rho$ to attain the boundary conditions on $[0,\tilde{\ell}_1]\times\{0,\tilde{\ell}_2\}\times[0,\tilde{\ell}_3].$

Hence, from Fubini's theorem we get
$$
\int_\omega\dist^2(\nabla v,\{\tilde A,\tilde B\})dx \lesssim (1-2\rho)\tilde\ell_2\int_{[0,\tilde\ell_1]\times[0,\tilde\ell_3]}\dist^2(\nabla\tilde v,\{A,B\})^2dx+\rho\tilde\ell_1\tilde\ell_3
$$
and from the coarea formula $\|D^2v\|_{TV(\omega)}\lesssim\tilde\ell_2\|D^2\tilde v\|_{TV([0,\tilde\ell_1]\times[0,\tilde\ell_3])}$.
Then formula \eqref{eq:prop-vbra} gives \eqref{eq:3D-1O-app1}.
The case $m=2$ is completely analogous.
\end{proof}

The function $v$ from Lemma \ref{lem:3D-1O-app} above corresponding to $m=1$ \emph{branches in the direction $e_3$} and \emph{laminates in direction $e_1$}, whereas when $m=2$, $v$ branches in the $e_1$-direction and laminates in $e_2$.

Next, we proceed to construct branched double laminates in three dimensions. In what follows the matrices $A_j \in \R^{3\times 3}$ correspond to the ones from the well $K_4$ defined in \eqref{eq:K4}.

\begin{lem}\label{lem:3D-2O}
Let $K_4$ be as in \eqref{eq:K4} and let
$$
R=[0,L_1]\times[0,L_2]\times[0,L_3], \quad V=L_1 L_2 L_3,
$$
with $0<L_1,L_2,L_3\le1$.
Then for every $0<\rho_2<\rho_1<\frac{L_3}{4}$ and $\epsilon\in(0,1)$ there exists $v\in W^{1,\infty}(R;\R^3)$ such that $v(x)=0$ for every $x\in\partial R$ and
\begin{equation}\label{eq:3D-2O}
\int_R\dist^2(\nabla v,\{A_1,A_2,A_3\})dx+\epsilon\|D^2v\|_{TV(R)} \lesssim V \Big(\Big(\frac{\rho_2}{\rho_1}\Big)^2+\frac{1}{L_3^2} \rho_1^2+\frac{1}{L_3}\rho_2+\frac{1}{L_2}{\rho_1}+\frac{\epsilon}{\rho_2}\Big).
\end{equation}
\end{lem}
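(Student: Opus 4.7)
The plan is to nest two branching constructions, in the spirit of the 2D upper bound argument for Theorem \ref{thm:K3}(ii). The \emph{outer} level interpolates between $A_1$ and the auxiliary matrix $J_1:=-e_1\otimes e_1=\tfrac12(A_2+A_3)$, which is rank-one connected to $A_1$ along $e_1$; the \emph{inner} level sits on each $J_1$-region of the outer construction and interpolates between $A_2$ and $A_3$, which are rank-one connected along $e_2$. Each level is delivered by Lemma \ref{lem:3D-1O-app}: $m=1$ (lamination in $e_1$, branching in $e_3$) for the outer and $m=2$ (lamination in $e_2$, branching in $e_1$) for the inner. Since $F=0=\tfrac12 A_1+\tfrac12 J_1$, the boundary datum is a $\tfrac12$-$\tfrac12$ laminate.

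\textbf{Step 1 (outer).} Applying Lemma \ref{lem:3D-1O-app} with $m=1$, $\tilde A=A_1$, $\tilde B=J_1$, $|c|=2$, and scale $\rho_1$, I obtain $u^{(1)}\in W^{1,\infty}(R;\R^3)$ with $u^{(1)}|_{\partial R}\equiv 0$ and elastic-plus-surface energy bounded by $V(\rho_1^2/L_3^2+\rho_1/L_2+\epsilon/\rho_1)$. The underlying geometry is the trivial $e_2$-extension of the 2D branching of Lemma \ref{prop:vert-bra}: cells $\omega_{j,k}\times[0,L_2]$ of dimensions $\ell_j\times L_2\times h_j$, with $\ell_j=L_1/(2^j N_1)$, $h_j\sim\theta^j L_3$, $N_1\sim L_1/\rho_1$ and $\theta\in(\tfrac14,\tfrac12)$; inside each cell, the $J_1$-prism $\omega_{j,k}^{(1)}\times[0,L_2]$ and the shear transition prism $\omega_{j,k}^{(3)}\times[0,L_2]$ are precisely those of \eqref{eq:cells-2order}.

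\textbf{Step 2 (inner).} On each $J_1$-prism $\omega_{j,k}^{(1)}\times[0,L_2]$ I replace $u^{(1)}$ by the function $v_{j,k}^{(1)}$ produced by Lemma \ref{lem:3D-1O-app} with $m=2$, $\tilde A=A_2$, $\tilde B=A_3$, $F=J_1$, on the box of dimensions $\lambda\ell_j/2\times L_2\times h_j$, using the \emph{generation-dependent} inner scale $\rho_2^{(j)}:=2^{-j}\rho_2$. This preserves self-similarity across generations and fulfils the smallness hypothesis $\rho_2^{(j)}<\lambda\ell_j/8$ provided $\rho_2\lesssim\lambda\rho_1$, which can be guaranteed by adjusting the constant in $N_1\sim L_1/\rho_1$. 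On the shear transition prism $\omega_{j,k}^{(3)}\times[0,L_2]$ I define $v_{j,k}^{(3)}$ by the affine change of coordinates mimicking \eqref{eq:shear2D}, incurring an additional elastic cost of order $L_2\ell_j^3/h_j$ per prism. Outside the $J_1$-set the outer map $u^{(1)}$ is kept unchanged. Boundary compatibility at each interface is automatic because Lemma \ref{lem:3D-1O-app} enforces the affine datum $J_1x+b$ on $\partial\omega_{j,k}^{(1)}$, and the shear in $\omega_{j,k}^{(3)}$ preserves this.

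\textbf{Step 3 (summation) and main obstacle.} Inserting the per-prism bound from \eqref{eq:3D-1O-app2}, multiplying by the number $N_1 2^j$ of cells, using $\ell_j N_1 2^j=L_1$, $h_j\sim\theta^j L_3$ and $N_1\sim L_1/\rho_1$, the contribution of generation $j$ becomes
\[
V\!\left[(\rho_2/\rho_1)^2\theta^j+\rho_2/(L_3\,2^j)+\epsilon(2\theta)^j/\rho_2+\rho_1^2/\bigl(L_3^2(4\theta)^j\bigr)\right],
\]
where the last term accounts for the shearing cost. For $\theta\in(\tfrac14,\tfrac12)$ each of the four series $\sum\theta^j$, $\sum 2^{-j}$, $\sum(2\theta)^j$ and $\sum(4\theta)^{-j}$ converges; truncation at the finest generation $j_0$ (defined by $\ell_{j_0}\sim h_{j_0}$) and handling of the remainder are performed exactly as in Lemma \ref{prop:vert-bra}. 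Adding this inner total to the outer bound of Step 1, and absorbing $\epsilon/\rho_1$ into $\epsilon/\rho_2$ via $\rho_2<\rho_1$, delivers \eqref{eq:3D-2O}. The delicate point is the generation-dependent choice $\rho_2^{(j)}=2^{-j}\rho_2$: a constant inner refinement would yield a divergent elastic series $\sum_j(4\theta)^j$, whereas a finer refinement would blow up $\sum_j\epsilon/\rho_2^{(j)}$; the ratio $2^{-j}$ is exactly the balancing one that produces the $(\rho_2/\rho_1)^2$ bound without spoiling either the surface energy or the $\rho_2/L_3$ cut-off term.
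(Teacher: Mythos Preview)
Your proposal is correct and follows essentially the same route as the paper: outer application of Lemma~\ref{lem:3D-1O-app} with $m=1$ between $A_1$ and $J_1=-A_1$ at scale $\rho_1$, inner application with $m=2$ between $A_2$ and $A_3$ at generation-dependent scale $\rho_2/2^{j}$, the shear transfer to the $\omega_{j,k}^{(3)}$ prisms yielding the extra $\ell_j^3/h_j$ term, and the same geometric-series summation with $\theta\in(\tfrac14,\tfrac12)$. One small imprecision: the $J_1$-prisms on which the inner construction actually lives extend in $x_2$ only over the non-cutoff layer $[\rho_1 L_2,(1-\rho_1)L_2]$ (since $u^{(1)}$ equals $0$, not $J_1x+b$, on the faces $x_2\in\{0,L_2\}$); this does not affect your energy bookkeeping because the corresponding cost is already absorbed in the outer term $V\rho_1/L_2$ from Step~1.
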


\begin{proof}
Let $v^{(1)}$ be given by Lemma \ref{lem:3D-1O-app} with $m=1$, $\tilde{\ell}_l=L_l$, $\rho=\rho_1$ and $\tilde{A}=A_1$, $\tilde{B}=-A_1$ and $F=0$.
Denote with $\chi^{(1)}$ the projection of $\nabla v^{(1)}$ onto $\{\pm A_1\}$ (which is almost everywhere well-defined).
The application of Lemma \ref{prop:vert-bra} in the proof of Lemma \ref{lem:3D-1O-app} implies that the region $\{\chi^{(1)}=-A_1\}\cap ([0,L_1]\times[\rho_1 L_2,(1-\rho_1)L_2]\times[0,L_3])$ consists of the union of the following cells
\begin{equation}
\begin{split}
R_{j,k}^{(1)} &= \big\{(x_1,x_3)\in\omega_{j,k}^{(1)}, x_2\in[\rho_1 L_2,(1-\rho_1)L_2]\big\}, \\
R_{j,k}^{(3)} &= \big\{(x_1,x_3)\in\omega_{j,k}^{(3)}, x_2\in[\rho_1 L_2,(1-\rho_1)L_2]\big\},
\end{split}
\end{equation}
where $\omega_{j,k}^{(1)}$ and $\omega_{j,k}^{(3)}$ are defined by \eqref{eq:cells-2order} with $\ell_j=\frac{\rho_1}{2^j}$, $y_j=L_3-L_3\frac{\theta^j}{2}$, $h_j:= y_{j+1}-y_j$ and $\lambda=\frac{1}{2}$ with $j\in\{0,\dots,j_0\}$ and $k\in\{0,\dots,k_0(j)\}$ where $j_0$ and $k_0(j)$ are defined as in the proof of Lemma \ref{prop:vert-bra}.

Now we apply Lemma \ref{lem:3D-1O-app} (up to suitable translations) on every $R_{j,k}^{(1)}$ with $m=2$, $\tilde{A}=A_2$, $\tilde{B}=A_3$, $F=-A_1$ and $\rho=\frac{\rho_2}{2^{j+3}}$.

Notice that this corresponds to $\tilde{\ell}_1=\frac{\rho_1}{2^{j+1}}$, $\tilde{\ell}_2=(1-2\rho_1)L_2$, $\tilde{\ell}_3=L_3\frac{(1-\theta)}{2}\theta^j$.
Hence, we obtain $v^{(1)}_{j,k}$ attaining boundary conditions $v^{(1)}$ on $R_{j,k}^{(1)}$ and in this situation the bounds from  equation \eqref{eq:3D-1O-app2} read
\begin{multline}\label{eq:3D-2O-app1}
\int_{R_{j,k}^{(1)}}\dist^2(\nabla v^{(1)}_{j,k},\{A_2,A_3\})dx+\epsilon\|D^2 v^{(1)}_{j,k}\|_{TV(R_{j,k}^{(1)})} \\
\lesssim L_2 L_3 \theta^j \frac{\rho_1}{2^j}\Big(\Big(\frac{\rho_2}{\rho_1}\Big)^2+\frac{1}{(2\theta)^jL_3}\rho_2+2^j\frac{\epsilon}{\rho_2}\Big).
\end{multline}
The construction inside $R_{j,k}^{(3)}$ is obtained by a shear of the construction in $R_{j,k}^{(1)}${, that is}
$$
v_{j,k}^{(3)}(x_1,x_2,x_3)=v_{j,k}^{(1)}\Big(x_1-\frac{\ell_j(x_{3}-y_j)}{4h_j}-\frac{\ell_j}{4},x_2,x_3\Big)+\Big(-\frac{\ell_j}{4}+\frac{\ell_j(x_3-y_j)}{4h_j},0,0\Big).
$$
Thus, analogously as in \eqref{eq:2nd-ord-cell2} we get
\begin{equation}\label{eq:3D-2O-app2}
\int_{R_{j,k}^{(3)}}\dist^2(\nabla v_{j,k}^{(3)},\{A_1,A_2,A_3\})dx \lesssim \int_{R_{j,k}^{(1)}}\dist^2(\nabla v_{j,k}^{(1)},\{A_1,A_2,A_3\})dx+\frac{\ell_j^3}{h_j}L_2.
\end{equation}
The function $v$ is defined as $v_{j,k}^{(1)}$ and $v_{j,k}^{(3)}$ respectively on $R_{j,k}^{(1)}$ and $R_{j,k}^{(3)}$ and equals $v^{(1)}$ elsewhere.
We also recall (for a better understanding) that there are no elastic energy contributions outside the union of $R_{j,k}^{(1)}$, $R_{j,k}^{(3)}$ (apart from the cut-off region) since in there $\nabla v^{(1)}=A_1$ by construction.
The total energy amount is therefore obtained by adding the contributions on $R_{j,k}^{(1)}\cup R_{j,k}^{(3)}$.
By \eqref{eq:3D-2O-app1}, \eqref{eq:3D-2O-app2}, summing over $j$ and $k$ and adding the contribution of the cut-off on $[0,L_1]\times([0,\rho_1 L_2]\cup[(1-\rho_1)L_2,L_2])\times[0,L_3]$ we have
\begin{equation*}
\begin{split}
& \int_R\dist^2(\nabla v,\{A_1,A_2,A_3\})dx+\epsilon\|D^2 v\|_{TV(R)} \\
& \qquad \lesssim \sum_{j=0}^{j_0+1}\sum_{k=0}^{k_0(j)}\int_{R_{j,k}^{(1)}\cup R_{j,k}^{(3)}}\dist^2(\nabla v,\{A_1,A_2,A_3\})dx+\epsilon\|D^2 v\|_{TV(R_{j,k}^{(1)}\cup R_{j,k}^{(3)})}+L_1L_3\rho_1
\\
& \qquad \lesssim V \sum_{j=0}^\infty \Big(\theta^j\Big(\frac{\rho_2}{\rho_1}\Big)^2+\frac{2^{-j}}{L_3}\rho_2+(2\theta)^j\frac{\epsilon}{\rho_2}\Big)+(4\theta)^{-j}\frac{1}{L_3^2}\rho_1^2\Big)+L_1L_3\rho_1,
\end{split}
\end{equation*}
and the result follows by the summubility of $\theta^j$, $(2\theta)^j$, $2^{-j}$ and $(4\theta)^{-j}$ recalling that $\theta\in(\frac{1}{4},\frac{1}{2})$.
\end{proof}

\subsubsection{Third-order construction}
With the previous auxiliary results in hand, we approach the proof of the upper bound from Theorem \ref{thm:K4} in the presence of a third order laminate as Dirichlet boundary condition. Here the strategy is the following:
\begin{itemize}
\item In the outer-most, coarsest branching construction, we do not use a purely ``one-dimensional'' branching construction, but a more refined construction which is ``two-dimensional'' in its branching behaviour. More precisely, we use a construction which oscillates in the $e_3$-direction and branches both towards the $e_1$ and $e_2$ boundaries, see Figures \ref{fig:3Dbra}, \ref{fig:3D-cells1}, \ref{fig:3D-cells2}. It is this outer-most improvement which leads to the improved scaling behaviour (compared to a simple twinning construction and purely ``two-dimensional branching constructions'').
\item All inner branching constructions are essentially ``one-dimensional branching constructions'' (as in Lemmas \ref{lem:3D-1O-app} and \ref{lem:3D-2O}).
\end{itemize}

\begin{prop}\label{prop:3D-3order}
Let $F$ be as above and $E_{\epsilon,4}$ as in \eqref{eq:energyK4}.
For every $\epsilon\in(0,1)$ there exist $u_\epsilon\in W^{1,\infty}((0,1)^3;\R^3)$ and $\chi_\epsilon\in BV((0,1)^3;K_4)$ such that $u_\epsilon(x)=Fx$ for every $x\in\R^3\setminus(0,1)^3$ and
\begin{equation}
E_{\epsilon,4}(u_\epsilon,\chi_\epsilon) \lesssim \epsilon^\frac{2}{5}.
\end{equation}
\end{prop}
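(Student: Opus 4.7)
The plan is to construct $u_\epsilon$ as a three-level hierarchical branched laminate. At the outermost (coarsest) level, we produce a laminate oscillating between the states $A_4=\diag(0,0,1)$ and $0\in\R^{3\times 3}$ in the $e_3$-direction (the two being rank-one connected via $e_3\otimes e_3$), with prescribed mean $F=\lambda A_4$. In order to match the affine boundary condition $Fx$ on the four lateral faces $\{x_j=0,1\}$ with $j\in\{1,2\}$, the outer construction is equipped with a \emph{two-dimensional} self-similar branching structure, refining simultaneously in the $e_1$ and $e_2$ directions. This is the extension of Lemma~\ref{prop:vert-bra} alluded to in the preamble to the proposition: at generation $j$ the building blocks are 3D regions of approximate size $\ell_j\times \ell_j\times h_j$ in $(e_1,e_2,e_3)$, with $\ell_{j+1}=\ell_j/2$ and $h_{j+1}=\theta h_j$ for some $\theta\in(1/4,1/2)$.

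Within each outer cell on which the coarse gradient would take the forbidden value $0\notin K_4$, we substitute the second-order laminate provided by Lemma~\ref{lem:3D-2O}, applied with internal scales $\rho_1,\rho_2$ and zero boundary data on $\partial R$. Across outer cell walls the substituted construction joins continuously with the outer construction, since both attain the value $v=0$ there; inside the $A_4$-cells the gradient is kept equal to $A_4$. Summing cell by cell the bound of Lemma~\ref{lem:3D-2O} together with the one-dimensional branching contribution of Lemma~\ref{lem:3D-1O-app} for the outer layer, the total energy takes, after collection of the convergent geometric series in $j$, the schematic form
$$ E_{\epsilon,4}(u_\epsilon,\chi_\epsilon)\lesssim \Big(r^2+\tfrac{\epsilon}{r}\Big)+\Big(\big(\tfrac{\rho_2}{\rho_1}\big)^2+\rho_1+\tfrac{\epsilon}{\rho_2}\Big), $$
where $r$ denotes the outermost branching scale. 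Optimisation of the inner terms forces the balance $\rho_2\sim \rho_1^{3/2}$ and $\rho_1\sim \epsilon/\rho_2$, hence $\rho_1\sim \epsilon^{2/5}$; the outer scale $r$ can be chosen (e.g.~$r\sim\epsilon^{1/3}$) so that its contribution of order $\epsilon^{2/3}$ is subdominant. This yields the desired $E_{\epsilon,4}(u_\epsilon,\chi_\epsilon)\lesssim \epsilon^{2/5}$.

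The main obstacle is the design and energy estimate of the two-dimensional outer branching itself. In contrast to the one-dimensional branchings of Lemmas~\ref{lem:3D-1O-app} and~\ref{prop:vert-bra}, this outer construction must match boundary data simultaneously on four lateral faces with a single self-similar cellular decomposition; the building blocks are 3D solids whose interpolation walls are slanted in both $e_1$ and $e_2$, and compatibility at inter-generational interfaces has to be preserved in both lateral directions at once. A secondary technicality is that the cell-aware bound of Lemma~\ref{lem:3D-2O} contains terms of the form $\rho_1^2/L_3^2$ which become singular as the outer cells shrink; the geometric decay rate $\theta\in(1/4,1/2)$ chosen for the outer construction, together with a careful coupling of the inner scales to the generation-$j$ cell sizes, is what ensures that the associated contributions remain summable and that the overall scaling matches the claimed $\epsilon^{2/5}$.
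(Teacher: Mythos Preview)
Your strategy is the paper's strategy: an outer two-dimensionally branching laminate in $e_3$ between $A_4$ and $0$, and in each $0$-cell the second-order block of Lemma~\ref{lem:3D-2O}. But the energy accounting is off in a way that matters. When you apply Lemma~\ref{lem:3D-2O} in an outer cell, the $e_3$-extent $L_3$ of that cell is governed by the outer oscillation period, i.e.\ $L_3\sim r/2^j$. Hence the term $\rho_1^2/L_3^2$ is not merely a summability issue across generations: after scaling the inner parameters with $j$ (as you say you will) it leaves a genuine leading-order contribution $(\rho_1/r)^2$, and similarly $\rho_2/L_3$ leaves $\rho_2/r$. The correct schematic (cf.\ the paper's computation) is
\[
E_{\epsilon,4}(u,\chi)\ \lesssim\ \Big(\tfrac{\rho_2}{\rho_1}\Big)^2+\Big(\tfrac{\rho_1}{r}\Big)^2+r^2+\tfrac{\rho_2}{r}+\rho_1+\tfrac{\epsilon}{\rho_2},
\]
and the three scales are \emph{coupled}: one cannot set $r\sim\epsilon^{1/3}$ independently. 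With your choice $r\sim\epsilon^{1/3}$, $\rho_1\sim\epsilon^{2/5}$, the omitted term $(\rho_1/r)^2\sim\epsilon^{2/15}$ dominates and destroys the $\epsilon^{2/5}$ bound.

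The fix is simply to include these coupling terms and optimise jointly. Balancing forces $\rho_1\sim r^2$, $\rho_2\sim r^3$, and then $r^2\sim\epsilon/r^3$, i.e.\ $r\sim\epsilon^{1/5}$, $\rho_1\sim\epsilon^{2/5}$, $\rho_2\sim\epsilon^{3/5}$, which gives the claimed $\epsilon^{2/5}$. Note that your inner scales $\rho_1,\rho_2$ were already correct; it is the outer scale, and the assertion that its contribution is ``subdominant'', that need correcting.
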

\begin{proof}
The proof is divided into different steps.

\emph{Step 1.}
We define a first order branching construction $u^{(1)}$ oscillating in the direction $e_3$ corresponding to a convex combination of the zero matrix and $A_4$.
Let $\tilde u$ be defined by Lemma \ref{prop:vert-bra} with $L=H=$1, $N=\frac{1}{r}$, $A=0$, $B=\diag(1,0)$, $F=\diag(\lambda,0)$ and $p=2$, where $0<r<1$ is an small parameter which is to be fixed below.
Then we define $u^{(1)}$ such that $u^{(1)}_1=u^{(1)}_2=0$ and
$$
u^{(1)}_3(x_1,x_2,x_3)=\tilde u_1(x_3,\rho(x_1,x_2)),
\quad
\text{where } \rho(x_1,x_2):=\max\Big\{\Big|x_1-\frac{1}{2}\Big|,\Big|x_2-\frac{1}{2}\Big|\Big\}+\frac{1}{2}
$$
which is Lipschitz continuous and attains the boundary conditions.
\begin{figure}[t]
\begin{center}
\includegraphics[width=0.3\linewidth]{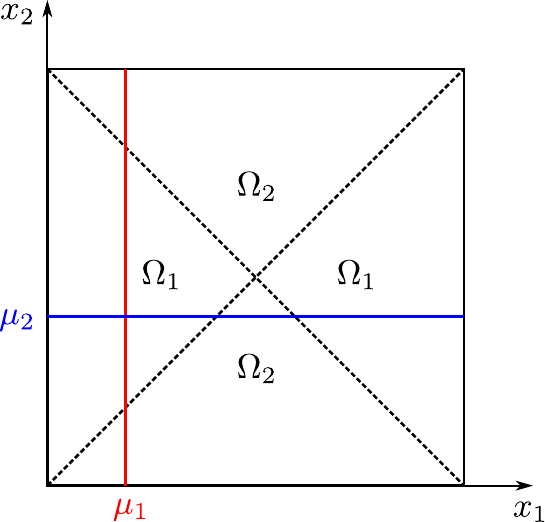}
\,
\includegraphics[width=0.3\linewidth]{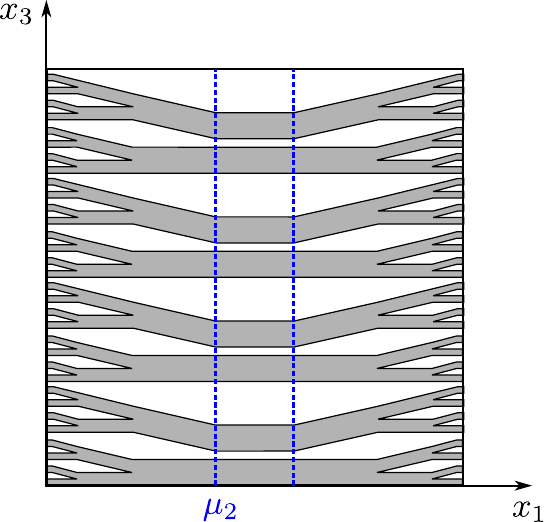}
\,
\includegraphics[width=0.3\linewidth]{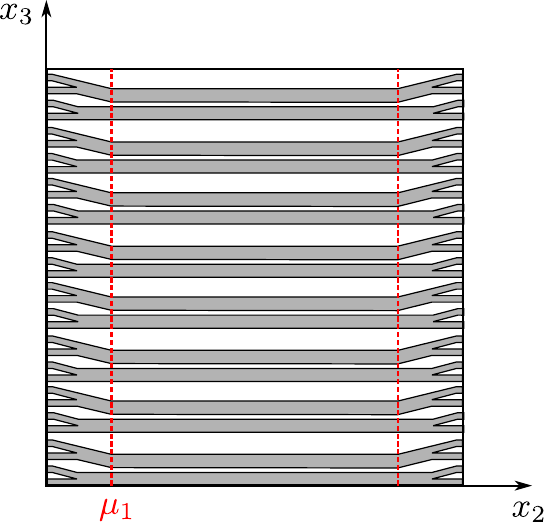}
\end{center}
\caption{Schematic illustration of the ``two-dimensional'' branching construction from the proof of Proposition \ref{prop:3D-3order}. On the left the projection of the domain onto the  $x_1 x_2$-plane and the subdivision into $\Omega_1$ and $\Omega_2$.
In the center the branching construction in the slice $x_2=\mu_2$, on the right that corresponding to $x_1=\mu_1$.}
\label{fig:3Dbra}
\end{figure}\\
The elastic energy contribution is controlled by that of the function $\tilde u$ by the coarea formula, that is
\begin{align*}
\int_{[0,1]^3}\dist^2(\nabla u^{(1)},\{A_4,0\})dx &= \int_{[0,1]^3}\dist^2(\nabla u^{(1)}_3,\{ e_3,0\})dx \\
&= \frac{1}{\sqrt{2}}\int_\frac{1}{2}^1\int_0^1 4\rho\dist^2(\nabla\tilde u_1,\{ e_1,0\})dx_3d\rho \lesssim r^2.
\end{align*}
We define the sets $\Omega_1:=\{x\in[0,1]^3 \,:\, |x_1-\frac{1}{2}|>|x_2-\frac{1}{2}|\}$ and $\Omega_2=[0,1]^3\setminus\overline{\Omega_1}$.
We remark that on $\Omega_1$ the function $u^{(1)}$ depends only on $x_1$ and $x_3$, whereas on $\Omega_2$ only on $x_2$ and $x_3$, see Figure \ref{fig:3Dbra}.
Due to  the symmetric role of $x_1$ and $x_2$, we infer that
$$
|D^2u^{(1)}|(\Omega_1)=|D^2u^{(1)}|(\Omega_2)\le|D^2\tilde u|((0,1)^2)\sim\frac{1}{r}+C,
$$
where the additive constant comes from the jump of the gradient on the diagonals $\p\Omega_1\cup\p\Omega_2\setminus Per(\Omega)$.
Hence,
\begin{equation}\label{eq:3D-step1}
\int_{[0,1]^3}\dist^2(\nabla u^{(1)},\{A_4,0\})dx+\epsilon|D^2u^{(1)}|\big((0,1)^3\big) \lesssim r^2+\frac{\epsilon}{r}.
\end{equation}

\begin{figure}[t]
\begin{center}
\includegraphics{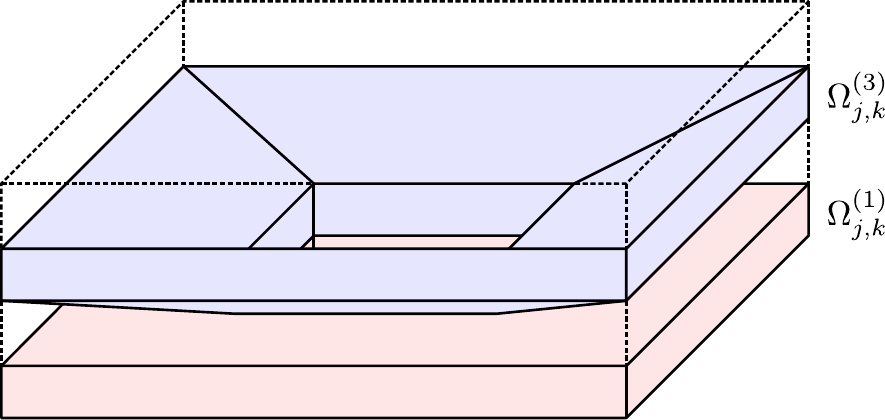}
\end{center}
\caption{Schematic illustration of the auxiliary sets in Proposition \ref{prop:3D-3order}. A representation of the sets $\Omega_{j,k}^{(1)}$ and $\Omega_{j,k}^{(3)}$.
The hashed lines correspond to the edges of the cell $\Omega_{j,k}$.}
\label{fig:3D-cells1}
\end{figure}

\emph{Step 2.}
From the branching procedure in Step 1, $\Omega$ is divided into cells $\Omega_{j,k}$ that are the analogues of $\omega_{j,k}$ defined in \eqref{eq:cells-bra}.
These are
$$
\Omega_{j,k}=\big\{y_j\le\rho(x_1,x_2)\le y_{j+1},\, k\ell_j\le x_3\le(k+1)\ell_j\big\},
$$
where $\ell_j=\frac{r}{2^j}$, $y_j=1-\frac{\theta^j}{2}$ with $\frac{1}{4}<\theta<\frac{1}{2}$, see Figure \ref{fig:3D-cells1}. 
Let $\chi^{(1)}$ denote the projection of $\nabla u^{(1)}$ on $\{A_4,0\}$, then the region $\{\chi^{(1)}=0\}$ (apart from the cut-off region) consists of the union of $\Omega_{j,k}^{(1)}$ and $\Omega_{j,k}^{(3)}$, as represented in Figure \ref{fig:3D-cells1} and defined by the following formulae: 
\begin{equation}\label{eq:3D-cells-2order}
\begin{split}
\Omega_{j,k}^{(1)} &:=\Big\{y_j\le\rho(x_1,x_2)\le y_{j+1},\, k\ell_j\le x_3\le\Big(k+\frac{\lambda}{2}\Big)\ell_j\Big\}, \\
\Omega_{j,k}^{(3)} &:=\Big\{y_j\le\rho(x_1,x_2)\le y_{j+1},\, \Big(k+\frac{\lambda}{2}\Big)\ell_j\le x_3-\frac{(1-\lambda)\ell_j (\rho(x_1,x_2)-y_j)}{2h_j}\le(k+\lambda)\ell_j\Big\},
\end{split}
\end{equation}
where $h_j=y_{j+1}-y_j$.
In each of these subdomains $u^{(1)}$ is piecewise affine and we replace it with the second order branching construction defined by Lemma \ref{lem:3D-2O} attaining boundary condition $u^{(1)}$. 

\begin{figure}[t]
\begin{center}
\includegraphics{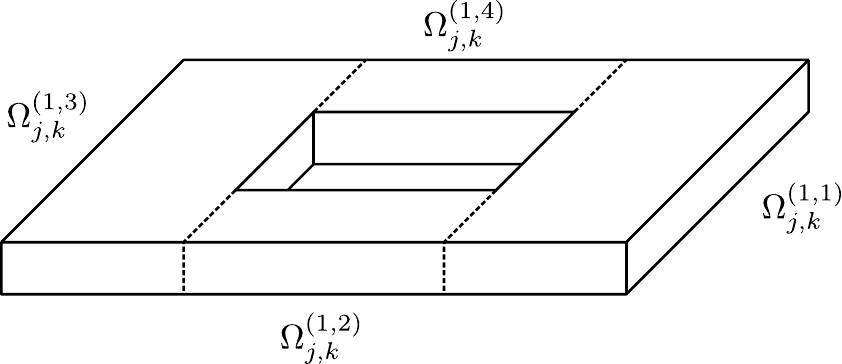}
\end{center}
\caption{Schematic illustration of the auxiliary sets in Proposition \ref{prop:3D-3order}: Subdivision of $\Omega_{j,k}^{(1)}$ into smaller cells.}
\label{fig:3D-cells2}
\end{figure}

\emph{Step 3.}
We first define the construction on $\Omega_{j,k}^{(1)}$.
Since Lemma \ref{lem:3D-2O} works on axis-parallel rectangular cells we split $\Omega_{j,k}^{(1)}$ into four rectangular sets
\begin{align*}
\Omega_{j,k}^{(1,1)} &= [y_j,y_{j+1}]\times[1-y_{j+1},y_{j+1}]\times\Big[k\ell_j,\Big(k+\frac{\lambda}{2}\Big)\ell_j\Big] ,\\
\Omega_{j,k}^{(1,2)} &= [1-y_j,y_j]\times[y_j,y_{j+1}]\times\Big[k\ell_j,\Big(k+\frac{\lambda}{2}\Big)\ell_j\Big],
\end{align*}
with $\Omega_{j,k}^{(1,3)}$ and $\Omega_{j,k}^{(1,4)}$ being reflections, with respect to $(\frac{1}{2},\frac{1}{2},\frac{1}{2})$, of $\Omega_{j,k}^{(1,1)}$ and $\Omega_{j,k}^{(1,2)}$ respectively (Figure \ref{fig:3D-cells2}).
We apply Lemma \ref{lem:3D-2O} (up to suitable translations) on $\Omega_{j,k}^{(1,1)}$ and $\Omega_{j,k}^{(1,3)}$, thus with $L_1=\frac{1-\theta}{2}\theta^j$, $L_2=2y_{j+1}-1$, $L_3=\lambda\frac{r}{2^{j+1}}$, $\rho_1=\lambda\frac{r_2}{2^{j+3}}$ and $\rho_2=\lambda\frac{r_3}{2^{j+3}}$, and on $\Omega_{j,k}^{(1,2)}$ and $\Omega_{j,k}^{(1,4)}$ with $L_1=2y_j-1$, $L_2=\frac{1-\theta}{2}\theta^j$, $L_3=\lambda\frac{r}{2^{j+1}}$, $\rho_1=\lambda\frac{r_2}{2^{j+3}}$ and $\rho_2=\lambda\frac{r_3}{2^{j+3}}$. Here the parameters $r_2, r_3$ satisfy the bounds $0<r_3<r_2<r<1$. Their more precise size is to be determined below.
The resulting function $u_{j,k}^{(1)}$ is defined on the whole $\Omega_{j,k}^{(1)}$, it is continuous by construction and by \eqref{eq:3D-2O} satisfies
\begin{equation}\label{eq:3D-3O-2}
\begin{split}
&\int_{\Omega_{j,k}^{(1)}}\dist^2(\nabla u_{j,k}^{(1)},\{A_1,A_2,A_3\})dx+\epsilon\|D^2 u_{j,k}^{(1)}\|_{TV(\Omega_{j,k}^{(1)})} \\
& \qquad \lesssim \frac{r}{2^j}\Big(\theta^j\Big(\frac{r_3}{r_2}\Big)^2+\theta^j\Big(\frac{r_2}{r}\Big)^2+\theta^j\frac{r_3}{r}+(2\theta)^j\frac{\epsilon}{r_3}+\frac{1}{2^j}r_2\Big).
\end{split}
\end{equation}
The construction inside $\Omega_{j,k}^{(3)}$ is obtained from the one above as follows
\begin{equation}\label{eq:shear}
\begin{split}
&u_{j,k}^{(3)}(x_1,x_2,x_3) \\
&\quad =u_{j,k}^{(1)}\Big(x_1,x_2,x_3-\frac{(1-\lambda)\ell_j(\rho(x_1,x_2)-y_j)}{2h_j}-\frac{\lambda\ell_j}{2}\Big)+\Big(0,0,\frac{(1-\lambda)\ell_j(\rho(x_1,x_2)-y_j)}{2h_j}\Big).
\end{split}
\end{equation}
Thus analogously as in \eqref{eq:2nd-ord-cell2} we get
\begin{equation}\label{eq:3D-3O-3}
\int_{\Omega_{j,k}^{(3)}}\dist^2(\nabla u_{j,k}^{(3)},K_4)dx \lesssim \int_{\Omega_{j,k}^{(1)}}\dist^2(\nabla u_{j,k}^{(1)},K_4)dx+\frac{\ell_j^3}{h_j}.
\end{equation}
The function $u$ is then defined as $u_{j,k}^{(1)}$ and $u_{j,k}^{(3)}$ respectively on $\Omega_{j,k}^{(1)}$ and $\Omega_{j,k}^{(3)}$ and equals $u^{(1)}$ elsewhere.
By \eqref{eq:3D-3O-2}, \eqref{eq:3D-3O-3} and summing on $j$ and $k$ we have
$$
E_{\epsilon,4}(u,\chi) \lesssim \Big(\frac{r_3}{r_2}\Big)^2+\Big(\frac{r_2}{r}\Big)^2+r^2+\frac{r_3}{r}+r_2+\frac{\epsilon}{r_3},
$$

where $\chi$ is the projection of $\nabla u$ on $K_4$ and we have used that $\|D\chi\|_{TV}\le C\|D^2 u\|_{TV}$.
An optimization argument yields that $r_k\sim r^k$ and $r\sim\epsilon^\frac{1}{5}$ and the result is proven.
\end{proof}

\begin{proof}[Proof of the upper bounds from Theorem \ref{thm:K4}] We split the proof into three cases:

\emph{Case 1: $F\in K_4^{(1)}\setminus K_4$.}
Working as in Step 1 of the proof of Proposition \ref{prop:3D-3order}, we can find $u^{(1)}$, a branching construction with gradient oscillating between $A_2$ and $A_3$.
A scale optimization in \eqref{eq:3D-step1} gives the upper bound for $F\in K_4^{(1)}\setminus K_4$ claimed in point (i).

\emph{Case 2: $F\in K_4^{(2)}\setminus K_4^{(1)}$.}
Again we work as in the proof of Proposition \ref{prop:3D-3order}.
In particular, we define the constructions $u_{j,k}^{(1)}$ as in Step 2 by applying Lemma \ref{lem:3D-1O-app} rather than Lemma \ref{lem:3D-2O}.
Thus, the energy contributions corresponding to \eqref{eq:3D-3O-2} and \eqref{eq:3D-3O-3} read
$$
\int_{\Omega_{j,k}^{(1)}}\dist^2(\nabla u_{j,k}^{(1)},\{A_2,A_3\})dx+\epsilon\|D^2 u_{j,k}^{(1)}\|_{TV(\Omega_{j,k}^{(1)})} \lesssim \frac{r}{2^j}\Big(\theta^j\Big(\frac{r_2}{r}\Big)^2+\frac{1}{2^j}r_2+(2\theta)^j\frac{\epsilon}{r_2}\Big),
$$
$$
\int_{\Omega_{j,k}^{(3)}}\dist^2(\nabla u_{j,k}^{(3)},K_4)dx \lesssim \int_{\Omega_{j,k}^{(1)}}\dist^2(\nabla u_{j,k}^{(1)},K_4)dx+\frac{\ell_j^3}{h_j}.
$$
Summing over $j$ and $k$ we get
$$
E_{\epsilon,4}(u^{(2)},\chi^{(2)}) \lesssim \Big(\frac{r_2}{r}\Big)^2+r_2+r^2+\frac{\epsilon}{r_2},
$$
which implies the result after an optimization in $r$ and $r_2$.

\emph{Case 3: $F\in K_4^{(3)}\setminus K_4^{(2)}$.}
Eventually, Proposition \ref{prop:3D-3order} directly proves the upper bound of Theorem \ref{thm:K4} (ii) for $m=4$.
\end{proof}

\section{Laminates of Arbitrary Order: Proof of Theorem \ref{thm:Kn}}
\label{sec:low_n}

\subsection{Proof of the lower bounds from Theorem \ref{thm:Kn}}
In this section, we provide the proof of the lower bounds from Theorem \ref{thm:Kn}. This follows by combining the high and low frequency estimates from Proposition \ref{prop:cone-red-nonl} and Lemma \ref{lem:axes_estimates} along the same lines as the arguments from the previous section.

\begin{proof}[Proof of the lower bounds from Theorem \ref{thm:Kn}]
The argument for laminates of first order follows from exactly the same argument as in the previous sections. We thus only focus on laminates of order two and higher. Let thus $F\in K^{(lc)}_{n+1}\subset \R^{n\times n} $ be a (genuine) laminate of order $2\leq m \leq n$.

We argue in several steps generalizing the arguments from the previous sections.

\emph{Step 1: $\tilde{\chi}_{2,2} + 2\tilde{\chi}_{3,3}$ determines $\tilde{\chi}_{1,1}$.} As in the previous sections, we obtain that for some polynomial $f$ of degree $n$ we have that $\tilde{\chi}_{1,1}=f(\tilde{\chi}_{2,2}+ 2\tilde{\chi}_{3,3})$. The Fourier support of $\tilde{\chi}_{2,2} + 2 \tilde{\chi}_{3,3}$ concentrates on  
\begin{align*}
S_n(C_{2,\tilde{\mu},\tilde{\mu}_2}\cup C_{3,\tilde{\mu},\tilde{\mu}_2 }\cup \hdots \cup  C_{n,\tilde{\mu},\tilde{\mu}_2 }) \subset \{k \in \Z^n: \ |k_1|\lesssim \epsilon^{3\alpha -1}\}.
\end{align*}
By Proposition \ref{prop:cone-red-nonl} we hence infer that for $\tilde{\mu}_2 = c \epsilon^{2\alpha -1}$ (with the prefactor $c>0$ to be determined later) and $\tilde{\mu} = \epsilon^{\alpha}$ it holds that
\begin{align*}
\|\tilde{\chi}_{1,1}-{\chi}_{1,\tilde{\mu},\tilde{\mu}_3}(D)\tilde{\chi}_{1,1}\|_{L^2}^2+ \sum\limits_{j=2}^{n} \|\tilde{\chi}_{j,j}-{\chi}_{j,\tilde{\mu},\tilde{\mu}_2}(D)\tilde{\chi}_{j,j}\|_{L^2}^2 \lesssim \psi_{1-\gamma}(\epsilon^{-2\alpha} (E_{\epsilon,n+1}(\chi,F) + \epsilon \Per(\Omega)))
\end{align*}
where $\tilde\mu_3=\tilde M_1\tilde\mu\tilde\mu_2$ for some constant $\tilde M_1>1$.

\emph{Step 2: $\tilde{\chi}_{1,1} $ determines $\tilde{\chi}_{3,3} + \lambda_4 \tilde{\chi}_{4,4} + \lambda_5 \tilde{\chi}_{5,5} + \cdots + \lambda_{n} \tilde{\chi}_{n n} $, where $\lambda_4 = 1-\nu_4$ and $\lambda_{j+1}=1-\nu_{j+1}(1-\nu_j)$ for $j\in \{4,\dots,n-1\}$.}
By definition of the matrices $A_1,\dots,A_{n+1}$, we obtain that 
\begin{align*}
\tilde{\chi}:=\tilde{\chi}_{3,3} + \lambda_4 \tilde{\chi}_{4,4} + \lambda_5 \tilde{\chi}_{5,5} + \cdots + \lambda_{n} \tilde{\chi}_{n n} = g(\tilde{\chi}_{1,1})
\end{align*}
for some polynomial $g$ and the values of $\lambda_j$ given above.
As a consequence, again by Corollary \ref{cor:cone-red-nonl}, we deduce that 
\begin{align*}
\sum\limits_{j=2}^{n} \|\tilde{\chi}- \chi_{j,\tilde\mu,\tilde\mu_4}(D)\tilde{\chi}\|_{L^2}^2 \le C \psi_{(1-\gamma)^2}(\epsilon^{-2\alpha} (E_{\epsilon,n+1}(\chi,F) + \epsilon \Per(\Omega))),
\end{align*}
where $C>0$ is a constant depending on $\gamma$ and $\tilde\mu_4=\tilde M_2\tilde\mu\tilde\mu_3$ for some $\tilde M_2>1$.

Since the functions $\tilde{\chi}_{j,j}$ have Fourier support in different, disjoint cones $C_{j,2\mu,2\mu_3}$, this additional Fourier cut-off implies that for all $j\in \{3,\dots, n\}$ we, in particular, deduce that
\begin{align*}
\sum\limits_{j=3}^{n} \|\tilde{\chi}_{j,j}- \chi_{j,\tilde\mu,\tilde\mu_4}(D)\tilde{\chi}_{j,j}\|_{L^2}^2 \le C \psi_{(1-\gamma)^2}(\epsilon^{-2\alpha} (E_{\epsilon,n+1}(\chi,F) + \epsilon \Per(\Omega))),
\end{align*}

\emph{Step 3: For $j\in \{3,\dots,n-1\}$ we have that $\tilde{\chi}_{j,j}$ determines $\tilde{\chi}_{j+1,j+1}$.} Again from the definition of the matrices $A_1,\dots,A_n, A_{n+1}$ we have that $\tilde{\chi}_{j+1,j+1} = g_j(\tilde{\chi}_{j,j})$ for some polynomial $g_j$. Further applications of Corollary \ref{cor:cone-red-nonl} hence yield that for $j\geq 3$
\begin{align*}
\|\tilde{\chi}_{j+1,j+1}- \chi_{j+1,\tilde\mu,\tilde\mu_{j+2}}(D)\tilde{\chi}_{j+1,j+1}\|_{L^2}^2 \le C\psi_{(1-\gamma)^2}(\epsilon^{-2\alpha} (E_{\epsilon,n+1}(\chi,F) + \epsilon \Per(\Omega))),
\end{align*}
where $\tilde\mu_{j+2}=\tilde M_{j}\tilde\mu\tilde\mu_{j+1}$.

\emph{Step 4: Conclusion.}
Let thus $m\geq 3$. By the previous steps, we obtain that $\tilde{\chi}_{m, m}$ satisfies
\begin{align*}
\|\tilde{\chi}_{m,m}- \chi_{m,\tilde\mu,\tilde\mu_{m+1}}(D)\tilde{\chi}_{m,m}\|_{L^2}^2 \le C \psi_{(1-\gamma)^{m-1}}(\epsilon^{-2\alpha} (E_{\epsilon,n+1}(\chi,F) + \epsilon \Per(\Omega))),
\end{align*}
for $\mu_{m+1} = \tilde M_{m-1}\tilde\mu\tilde\mu_m=\Pi_{j=1}^{m-1}\tilde M_j \mu^{m-1}\mu_2=c\Pi_{j=1}^{m-1}\tilde M_j\epsilon^{(m+1)\alpha-1}$. Taking $c=\frac{1}{8}\Pi_{j=1}^{m-1}\tilde M_j$ in the definition of $\tilde\mu_2$, the truncated cone $C_{m,2\tilde\mu,2\tilde\mu_{m+1}}$ has width $8\tilde\mu\tilde\mu_{m+1}=\epsilon^{(m+2)\alpha -1}$. Choosing $\alpha = \frac{1}{m+2}$, we thus arrive at 
\begin{align*}
\|\tilde{\chi}_{m,m}- \chi_{m,\tilde\mu,\tilde\mu_{m+1}}(D)\tilde{\chi}_{m,m}\|_{L^2}^2 \le C \psi_{(1-\gamma)^{m-1}}(\epsilon^{-\frac{2}{m+2}} (E_{\epsilon,n+1}(\chi,F) + \epsilon \Per(\Omega))) .
\end{align*}
Combining this with Lemma \ref{lem:axes_estimates} further yields
\begin{align*}
\|\tilde{\chi}_{m,m} \|_{L^2}^2 \le C \psi_{(1-\gamma)^{m-1}}(\epsilon^{-\frac{2}{m+2}} (E_{\epsilon,n+1}(\chi,F) + \epsilon \Per(\Omega))) .
\end{align*}
As a consequence, and by the characterization of $K^{(m)}_{n+1}$, we arrive at
\begin{align*}
0 &< \min\{|F_{m,m}|,|F_{m,m}-1|^2 \} \leq \|\chi_{m,m} - F_{m,m} \|_{L^2}^2 \leq \|\tilde{\chi}_{m,m}\|_{L^2}^2\\
& \le C \psi_{(1-\gamma)^{m-1}}(\epsilon^{-\frac{2}{m+2}} (E_{\epsilon,n+1}(\chi,F) + \epsilon \Per(\Omega))) .
\end{align*}
Solving for $E_{\epsilon,n+1}({\chi},F)$ (by carrying out a case distinction as in the other lower bound proofs), absorbing the perimeter contribution in the left-hand-side and minimizing in $\chi$ then implies the desired result.
The case $m=2$ is obtained analogously by exploiting the estimate of Step 1.
\end{proof}

\subsection{Upper bounds}

In this subsesction we proceed similarly as we did for three-dimensional constructions. In addition, we then use an iterative argument to obtain the upper bound for laminates of order $n$.

\subsection{Lower order auxiliary constructions}

Again, we begin by giving branched simple laminate constructions whose oscillation occur in direction $e_m$ and which branch in direction $e_{m+1}$ for any $m\in \{1,\dots,n-1\}$.
This is a straightforward generalization of the three-dimensional result of Lemma \ref{lem:3D-1O-app}. We thus omit its proof.

\begin{lem}\label{lem:nD-1O}
Let $A,B\in \diag(n,\R)$ such that $A-B=c e_m\otimes e_m$ with $m\in \{1,\dots,n-1\}$, let $F=\lambda A+(1-\lambda)B$ for some $0<\lambda<1$ and let
$$
\omega=\prod_{l=1}^n[0,\tilde{\ell}_l], \quad V=\prod_{l=1}^n\tilde{\ell}_l,
$$
with $0<\tilde{\ell}_l\le1$.
Then for every $0<\rho<\frac{\tilde{\ell}_{m+1}}{4}$ and $\epsilon\in(0,1)$ there exists $v\in W^{1,\infty}(\omega;\R^n)$ such that $v(x)=Fx$ for every $x\in\partial\omega$ and satisfying
\begin{equation}\label{eq:nD-1O}
\int_\omega\dist^2(\nabla v,\{A,B\})dx+\epsilon\|D^2v\|_{TV(\omega)} \lesssim |c|^2V \Big( \frac{1}{\tilde{\ell}_{m+1}^2} \rho^2+\sum_{l\neq m,m+1}\frac{1}{\tilde{\ell}_l}\rho+\frac{\epsilon}{\rho}\Big).
\end{equation}
\end{lem}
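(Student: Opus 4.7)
The plan is to mimic the three-dimensional construction of Lemma \ref{lem:3D-1O-app}: build a two-dimensional branched laminate in the $(x_m, x_{m+1})$-plane, extend it trivially in the remaining $n-2$ coordinate directions, and then glue in thin cut-off layers near the boundaries in each of the remaining directions to recover the affine Dirichlet data.

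First, I would apply Lemma \ref{prop:vert-bra} on the rectangle $[0,\tilde{\ell}_m]\times[0,\tilde{\ell}_{m+1}]$ with the two-dimensional diagonal matrices $\hat{A}=\diag(A_{m,m},A_{m+1,m+1})$ and $\hat{B}=\diag(B_{m,m},B_{m+1,m+1})$, boundary datum $\hat{F}=\lambda\hat{A}+(1-\lambda)\hat{B}$, $L=\tilde{\ell}_m$, $H=\tilde{\ell}_{m+1}$ and $N=\tilde{\ell}_m/\rho$ (the hypothesis $N>4L/H$ is ensured by the assumption $\rho<\tilde{\ell}_{m+1}/4$). This yields a function $\tilde{v}:[0,\tilde{\ell}_m]\times[0,\tilde{\ell}_{m+1}]\to\R^2$ with
$$
\int \dist^2(\nabla\tilde{v},\{\hat{A},\hat{B}\})\,dx + \epsilon\|D^2\tilde{v}\|_{TV} \lesssim |c|^2\Big(\frac{\tilde{\ell}_m\rho^2}{\tilde{\ell}_{m+1}} + \frac{\epsilon\,\tilde{\ell}_m\tilde{\ell}_{m+1}}{\rho}\Big).
$$

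Next, I would define $v\in W^{1,\infty}(\omega;\R^n)$ by setting $v_l(x)=F_{l,l}x_l$ for every $l\notin\{m,m+1\}$, and $(v_m,v_{m+1})(x)=\tilde{v}(x_m,x_{m+1})$ on the interior slab $\prod_{l\neq m,m+1}[\rho,\tilde{\ell}_l-\rho]\times[0,\tilde{\ell}_m]\times[0,\tilde{\ell}_{m+1}]$, and interpolate linearly in each direction $l\notin\{m,m+1\}$ on a cut-off layer of thickness $\rho$ near $\{x_l=0\}$ and $\{x_l=\tilde{\ell}_l\}$ between the two-dimensional branching construction and the affine boundary datum $Fx$. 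By Fubini's theorem, the elastic and surface contributions from the interior slab are bounded by
$$
\prod_{l\neq m,m+1}\tilde{\ell}_l\cdot|c|^2\Big(\frac{\tilde{\ell}_m\rho^2}{\tilde{\ell}_{m+1}}+\frac{\epsilon\,\tilde{\ell}_m\tilde{\ell}_{m+1}}{\rho}\Big) \lesssim |c|^2V\Big(\frac{\rho^2}{\tilde{\ell}_{m+1}^2}+\frac{\epsilon}{\rho}\Big).
$$

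Finally, I would estimate the contribution of the cut-off layers. For each $l\notin\{m,m+1\}$, the cut-off layer in the $l$-th direction has volume of order $\rho\cdot V/\tilde{\ell}_l$; on it the linear interpolation produces a gradient at distance at most $O(|c|)$ from $\{A,B\}$ (so $|\nabla v|\lesssim|c|$), giving an elastic contribution of order $|c|^2\rho V/\tilde{\ell}_l$ and a surface contribution of order $\epsilon\,(V/\tilde{\ell}_l)\cdot(1+\|D^2\tilde{v}\|_{TV})$, which is absorbed by the $\epsilon/\rho$ term. Summing over $l\notin\{m,m+1\}$ and combining with the interior estimate yields \eqref{eq:nD-1O}. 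The main (very minor) technical point is to check that the interpolation between $\tilde{v}$ and the affine datum on the cut-off strips is globally Lipschitz with $BV$ gradient and that its gradient stays $O(|c|)$-close to $\{A,B\}$; this is analogous to the argument in the proof of Lemma \ref{lem:3D-1O-app} and is handled by a standard affine convex combination in the $l$-th variable.
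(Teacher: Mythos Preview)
Your proof plan is correct and follows exactly the approach the paper intends: the paper omits the proof entirely, stating only that it is ``a straightforward generalization of the three-dimensional result of Lemma \ref{lem:3D-1O-app},'' and your outline carries out precisely that generalization (two-dimensional branching in the $(x_m,x_{m+1})$-plane via Lemma \ref{prop:vert-bra}, trivial extension in the remaining coordinates, and linear cut-off layers of width $\rho$ in each direction $l\neq m,m+1$).
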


We now give the energy contribution of a branching construction of order $h\le n-1$.
To do so, without loss of generality and for notational simplicity, we rearrange the order of our matrices slightly and consider $K=\tilde K_{n+1}=\{\tilde A_1,\dots,\tilde A_{n+1}\}$, where $\tilde A_j=A_j$ for every $j>3$ and $\tilde A_3=e_2\otimes e_2$, $\tilde A_2=e_1\otimes e_1-e_2\otimes e_2$ and $\tilde A_1=-e_1\otimes e_1-e_2\otimes e_2$, where $A_j\in K_{n+1}$ are defined as in \eqref{eq:energyKn}.

We take into account boundary conditions in the $(n-1)$ lamination convex hull $\tilde K_{n+1}^{(n-1)}$ (see Definition \ref{defi:laminates}). 
We emphasize that all constructions for $\tilde{K}_{n+1}$ also yield constructions for $K_{n+1}$ by switching the roles of $x_1$ and $x_2$ in $K_{n+1}$. In particular, all constructions are applicable in our setting from Section \ref{sec:n_wells_intro}.

\begin{lem}\label{lem:nD-korder}
Let $\{L_l\}_{l=1}^n$ be positive parameters with $0<L_l\le1$,
$$
R=\prod_{l=1}^n [0,L_l], \quad V=\prod_{l=1}^n L_l,
$$
and let $K_{n+1}$ be defined in \eqref{eq:energyKn}.
Given $h\le n-1$, let $0<\rho_h<\dots<\rho_1<\frac{L_{h+1}}{4}\le\rho_0:=1$ be an arbitrary choice of parameters.
Let $J_{j+1}=\tilde A_{j+1}-e_j\otimes e_j$ for $j=4,\dots,n$ according to the notation of Section \ref{sec:n_wells_intro}, and let $J_4=0$, $J_3=-\tilde A_3$.
Then there exist $u^{(h)}\in W^{1,\infty}(R;\R^n)$ such that $v(x)=J_{h+2}x$ for every $x\in\partial R$ and
\begin{equation}\label{eq:nD-hO}
\begin{split}
& \int_R\dist^2(\nabla u^{(h)},K_{h+1})dx + \epsilon\|D^2 u^{(h)}\|_{TV(R)} \\
& \qquad \lesssim V\Big(\sum_{i=2}^h\Big(\Big(\frac{\rho_i}{\rho_{i-1}}\Big)^2+\frac{\rho_i}{\rho_{i-2}}\Big)+\frac{1}{L_{h+1}^2}\rho_1^2+\frac{1}{L_{h+1}}\rho_2+\frac{\epsilon}{\rho_h}+\sum_{l\neq h,h+1}\frac{1}{L_l}\rho_1\Big),
\end{split}
\end{equation}
where $\tilde K_{h+1}:=\{\tilde A_1,\dots,\tilde A_{h+1}\}$.
\end{lem}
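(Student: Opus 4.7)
The plan is to proceed by induction on $h \in \{1, \ldots, n-1\}$, generalizing the two-step refinement used in Lemma \ref{lem:3D-2O} and Proposition \ref{prop:3D-3order}. The base case $h = 1$ is immediate: $J_3 = \tfrac12(\tilde A_1 + \tilde A_2)$ is the midpoint of the two wells of $\tilde K_2$, which are rank-one connected in direction $e_1$. Applying Lemma \ref{lem:nD-1O} with $m = 1$ and $\rho = \rho_1$ produces $u^{(1)}$ whose energy is bounded by $V(L_2^{-2}\rho_1^2 + \sum_{l \geq 3} L_l^{-1}\rho_1 + \epsilon/\rho_1)$, matching the bound \eqref{eq:nD-hO} at $h = 1$ (where the sum on $i$ is empty).

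For the inductive step, I would first decompose the boundary datum as $J_{h+2} = \nu_{h+2}\tilde A_{h+1} + (1-\nu_{h+2})J_{h+1}$, with $\tilde A_{h+1} - J_{h+1}$ a positive multiple of $e_h \otimes e_h$ (using the conventions $J_2 := \tilde A_1$ and $\nu_3 = \nu_4 = \tfrac12$ for the lowest orders); a direct computation on the entries of $\tilde A_{h+1}$, $\tilde A_{h+2}$ shows that this rank-one decomposition is consistent with the definition $J_{h+2} = \tilde A_{h+2} - e_{h+1}\otimes e_{h+1}$. Applying Lemma \ref{lem:nD-1O} with $m = h$, scale $\rho = \rho_1$, and wells $\tilde A_{h+1}, J_{h+1}$ on the whole box $R$ produces an outer branching $u^{(\mathrm{out})}$ with boundary values $J_{h+2}x$ and gradient equal to $\tilde A_{h+1}$ or $J_{h+1}$ on a self-similar family of axis-aligned and sheared cells $\omega_{j,k}^{(\pm)}$, $j\in\{0,\ldots,j_0\}$, entirely analogous to \eqref{eq:3D-cells-2order}. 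The $J_{h+1}$-cells have dimensions $\ell_j \sim \rho_1/(N 2^j)$ in direction $e_h$, $h_j \sim \theta^j L_{h+1}$ in direction $e_{h+1}$ (with $\theta \in (1/4, 1/2)$), and size $\sim L_l$ in the remaining directions, and the energy of $u^{(\mathrm{out})}$ itself accounts for the $\rho_1^2/L_{h+1}^2$, $\epsilon/\rho_1$, and $\sum_{l\neq h,h+1} \rho_1/L_l$ terms in \eqref{eq:nD-hO}.

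On each axis-aligned $J_{h+1}$-cell I would insert a rescaled copy of the inductive $u^{(h-1)}$ from the lemma at order $h-1$, with boundary datum $J_{h+1}x$ and shifted parameters $(\rho_2,\ldots,\rho_h)$ (so that the inductive ``$\rho'_i$'' corresponds to our $\rho_{i+1}$). On each sheared cell the same construction is composed with the affine change of variables from \eqref{eq:shear2D}/\eqref{eq:shear}, which produces only an additional $\ell_j^3/h_j$ elastic contribution. Multiplying the inductive bound by the number $\sim N2^j$ of cells at generation $j$ (each of volume $V\theta^j/(N2^j)$) and summing over $j$ and $k$ collapses to geometric series in $\theta$, $2\theta$ and $(4\theta)^{-1}$ which all converge for $\theta \in (1/4, 1/2)$. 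The $(\rho_2/\rho_1)^2 + \rho_2/\rho_0$ contribution emerges as the $i = 2$ term of the new outer level (the first through the branching cost $\rho_2^2/\ell_j^2$ of the inner construction, the second through the interaction of its cut-off $L_h^{\prime -1}\rho_3$ with $h_j$), and the $i = 3,\ldots,h$ terms are inherited verbatim from the inductive hypothesis.

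The main technical obstacle is to coordinate the branch frequency $N = N_j$ at each generation of the outer construction with the constraint $\rho_i < L'_{h-i+2}/4$ required to apply the inductive hypothesis inside each cell, while simultaneously producing the precise cross-level combinations $\rho_i/\rho_{i-2}$ in the final estimate. These cross-level terms arise from balancing the $L_{h+1}^{-1}\rho_2$-type cut-off cost of the inner construction (read against the outer cell height $h_j$) against the $\ell_j^3/h_j$ shear penalty of the sheared cells, and matching the $\theta$-exponents of the two is precisely what forces $\theta$ into the admissible range $(1/4, 1/2)$. Choosing $N_j$ in a self-similar fashion as in the proof of Proposition \ref{prop:3D-3order} (so that the inner cells keep a controlled aspect ratio across generations) then closes the induction and yields \eqref{eq:nD-hO}.
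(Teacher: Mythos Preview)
Your proposal is correct and follows essentially the same inductive scheme as the paper: base case via Lemma~\ref{lem:nD-1O}, an outer first-order branching at scale $\rho_1$ between $\tilde A_{h+1}$ and $J_{h+1}$, insertion of the order-$(h-1)$ construction with shifted parameters $\rho'_i \sim \rho_{i+1}/2^j$ on each $J_{h+1}$-cell, a shear on the skewed cells, and summation of the resulting geometric series in $\theta$, $2\theta$, $(4\theta)^{-1}$. Two minor bookkeeping points (not gaps): the outer construction uses a \emph{fixed} $N$ (it is the inner parameters that scale with $2^{-j}$, not an $N_j$), and the cross-term $\rho_i/\rho_{i-2}$ at level $i$ comes specifically from the $(L'_h)^{-1}\rho'_2$ term of the inductive estimate (with $L'_h=\ell_j\sim\rho_1/2^j$ and $\rho'_2\sim\rho_3/2^j$, giving $\rho_3/\rho_1$), together with the final regrouping $\sum_{l\neq h-1,h}L_l^{-1}\rho_2 \lesssim L_{h+1}^{-1}\rho_2+\sum_{l\neq h,h+1}L_l^{-1}\rho_1$.
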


\begin{proof}
We work with an inductive procedure.
The induction base is provided by Lemma \ref{lem:nD-1O} with $F=J_3$, $A=\tilde A_1$ and $B=\tilde A_{2}$.

\emph{Inductive hypothesis.}
We assume that, we can find a $(h-1)$-th-order branching $v^{(h-1)}$ such that $v(x)=J_{h+1} x$ for every $x\in\partial R'$
\begin{equation}\label{eq:nD-(k-1)order}
\begin{split}
& \int_{R'}\dist^2(\nabla v^{(h-1)},\tilde K_h)dx+\epsilon\|D^2 v^{(h-1)}\|_{TV(R')} \\
& \qquad \lesssim V'\Big(\sum_{i=2}^{h-1}\Big(\Big(\frac{\rho_i'}{\rho_{i-1}'}\Big)^2+\frac{\rho_i'}{\rho_{i-2}'}\Big)+\frac{1}{(L_h')^2}(\rho_1')^2+\frac{1}{(L_h')}\rho_2'+\frac{\epsilon}{\rho_{h-1}'}+\sum_{l\neq h-1,h}\frac{1}{L_l}\rho_1'\Big),
\end{split}
\end{equation}
for every arbitrary choice $\{L_l'\}_{l=1}^n$ and $0<\rho_{h-1}'<\dots<\rho_1'<\frac{L_h'}{4}<\rho_0':=1$ with $R':=\prod\limits_{j=1}^n [0,L_j']$ and $V'=|R'|$.

\emph{Inductive step.}
Let $u^{(1)}$ denote a first order construction given by Lemma \ref{lem:nD-1O} with $A=\tilde A_{h+1}$, $B=J_{h+1}$, $F=J_{h+2}$, $\rho=\rho_1$ and let $\chi^{(1)}$ be the projection of $\nabla u^{(1)}$ onto $\{\tilde A_{h+1},J_{h+1}\}$.
The regions $\{\chi^{(1)}=J_{h+1}\}$ (apart from the cut-off regions) consist of the union of $R_{j,k}^{(1)}$ and $R_{j,k}^{(3)}$ as below
\begin{equation}
\begin{split}
R_{j,k}^{(1)} &= \big\{(x_h,x_{h+1})\in\omega_{j,k}^{(1)},\, x_l\in[\rho_1 L_l,(1-\rho_1)L_l],\, l\neq h,h+1\big\}, \\
R_{j,k}^{(3)} &= \big\{(x_h,x_{h+1})\in\omega_{j,k}^{(3)},\, x_l\in[\rho_1 L_l,(1-\rho_1)L_l],\, l\neq h,h+1\big\},
\end{split}
\end{equation}
where $\omega_{j,k}^{(1)}$ and $\omega_{j,k}^{(3)}$ are defined by \eqref{eq:cells-2order} with $\ell_j=\frac{\rho_1}{2^j}$, $y_j=L_{h+1}-L_{h+1}\frac{\theta^j}{2}$, $h_j:= y_{j+1}-y_j$ and $\lambda= \nu_{h+1}$.
We define $u^{(h)}$ by substituting to $u^{(1)}$ a $(h-1)$-th-order branching on the set $\{\chi^{(1)}=J_{h+1}\}$ by applying the inductive hypothesis on every $R_{j,k}^{(1)}$ 
corresponding to the choice of parameter $\rho'_i= \nu_{h+1}\frac{\rho_{i+1}}{2^{j+2}}$.
Thus, denoting with $u_{j,k}^{(1)}$ and $u_{j,k}^{(3)}$ the construction inside $R_{j,k}^{(1)}$ and $R_{j,k}^{(3)}$ respectively, we have
\begin{equation}\label{eq:nD-hO-1}
\begin{split}
& \int_{R_{j,k}^{(1)}}\dist^2(\nabla u^{(h)},\tilde K_{h+1})dx+\epsilon\|D^2 u_{j,k}^{(1)}\|_{TV(R_{j,k}^{(1)})} \\
& \quad \lesssim \frac{V}{L_h}\theta^j\frac{\rho_1}{2^j} \Big(\sum_{i=3}^h \Big(\Big(\frac{\rho_i}{\rho_{i-1}}\Big)^2+\frac{\rho_i}{\rho_{i-2}}\Big)+\Big(\frac{2^j}{\rho_1}\Big)^2\Big(\frac{\rho_2}{2^j}\Big)^2+\frac{2^j}{\rho_1}\frac{\rho_2}{2^j}+\frac{2^j\epsilon}{\rho_h}+\frac{1}{(2\theta)^j}\sum_{l\neq h-1,h}\frac{1}{L_l}\rho_2\Big).
\end{split}
\end{equation}
The construction on $R_{j,k}^{(3)}$ is equal up to a shear, i.e.
\begin{align*}
&u_{j,k}^{(3)}(\dots,x_{h-1},x_h,x_{h+1},\dots) \\
&\quad =u_{j,k}^{(1)}\Big(\dots,x_{h-1},x_h-\frac{(1-\nu_{h+1})\ell_j (x_{h+1}-y_j)}{2(y_{j+1}-y_j)}-\frac{\nu_{h+1}\ell_j}{2},x_{h+1},\dots\Big)+\frac{\nu_{h+1}\ell_j}{2}J_{h+1} e_h \\
& \quad \qquad +\frac{(1-\nu_{h+1})\ell_j(x_{h+1}-y_j)}{2(y_{j+1}-y_j)}\tilde A_{h+1}e_h.
\end{align*}
Thus, analogously as in \eqref{eq:2nd-ord-cell2} and \eqref{eq:3D-2O-app2} we infer that
\begin{equation}\label{eq:nD-hO-2}
\int_{R_{j,k}^{(3)}}\dist^2(\nabla u^{(h)},\tilde K_{h+1})dx \lesssim \int_{R_{j,k}^{(1)}}\dist^2(\nabla u^{(h)},\tilde K_{h+1})dx+\Big(\frac{\rho_1}{2^j}\Big)^3\frac{1}{L_{h+1}\theta^j}\frac{V}{L_h L_{h+1}}.
\end{equation}
Eventually, combining \eqref{eq:nD-hO-1} and \eqref{eq:nD-hO-2}, summing over $j$ and $k$ and adding the cut-off term of amplitude $\rho_1$, we obtain
\begin{equation}
\begin{split}
& \int_{R}\dist^2(\nabla u^{(h)},\tilde K_{h+1})dx+\epsilon\|D^2 u^{(h)}\|_{TV(R)} \\
& \quad \lesssim V\Big(\sum_{i=2}^h \Big(\Big(\frac{\rho_i}{\rho_{i-1}}\Big)^2+\frac{\rho_i}{\rho_{i-2}}\Big)+\frac{\epsilon}{\rho_h}+\sum_{l\neq h-1,h}\frac{1}{L_l}\rho_2+\frac{1}{L_{h+1}^2}\rho_1^2+\sum_{l\neq h,h+1}\frac{1}{L_l}\rho_1\Big).
\end{split}
\end{equation}
We recall that the summability in $j$ (for every $k$) is ensured by taking $\frac{1}{4}<\theta<\frac{1}{2}$.
Lastly, noting that
$$
\sum_{l\neq h-1,h}\frac{1}{L_l}\rho_2 \lesssim \frac{1}{L_{h+1}}\rho_2+\sum_{l\neq h,h+1}\frac{1}{L_l}\rho_1,
$$
the result is proven.
\end{proof}

\subsubsection{$n$-th-order construction}

Thanks to the general result proven in the previous subsection we have everything in place to prove the upper bounds in Theorem \ref{thm:Kn}.

\begin{proof}[Proof of the upper bounds from Theorem \ref{thm:Kn}]
Working as in Step 1 of the proof of Proposition \ref{prop:3D-3order}, let $\tilde u$ be defined by Lemma \ref{prop:vert-bra} with $L=H=1$, $N=\frac{4}{r}$ and $p=2$, with $0<r<1$ a small length scale which is to be determined below.
We define $u_i^{(1)}=F_{i,i}$ for every $i\in\{1,\dots,n-1\}$ and
$$
u_n^{(1)}(x_1,\dots,x_n)=\tilde u_1(x_n,\rho(x_1,\dots,x_{n-1})), \quad \rho(x_1,\dots,x_{n-1}):=\max_{1\le i\le n-1}\Big\{\Big|x_i-\frac{1}{2}\Big|\Big\}+\frac{1}{2}.
$$
Let $\chi^{(1)}$ be the projection of $\nabla u^{(1)}$ on $\{A_{n+1},J_{n+1}\}$.
Proceeding as in the proof of Proposition \ref{prop:3D-3order}, the region $\{\chi^{(1)}=J_{n+1}\}$ consists of the union of the sets $\Omega_{j,k}^{(1)}$ and $\Omega_{j,k}^{(3)}$ defined as in \eqref{eq:3D-cells-2order} with $\rho(x_1,\dots,x_{n-1})$ in place of $\rho(x_1,x_2)$.
In each of these subdomains we replace $u^{(1)}$ with the $(n-1)$-th order branching defined by Lemma \ref{lem:nD-korder}.
As in the proof of Proposition \ref{prop:3D-3order} the energy on $\Omega_{j,k}^{(1)}$ is controlled by that inside the intervals
$$
R_j=[0,1]^{n-2}\times[0,\theta^j]\times\Big[0,\frac{r}{2^j}\Big].
$$
Thus, applying Lemma \ref{lem:nD-korder} (with the roles of $x_1$ and $x_2$ switched) with $L_l=1$, $L_n=\lambda\frac{r}{2^{j}}$ $\rho_i=\lambda\frac{r_{i+1}}{2^{j+2}}$, the resulting function $u$ satisfies
\begin{equation}\label{eq:nD-nO-2}
\begin{split}
&\int_{\Omega_{j,k}^{(1)}}\dist^2(\nabla u,K_{n+1})dx+\epsilon\|D^2 u\|_{TV(\Omega_{j,k}^{(1)})} \\
& \qquad \lesssim \frac{r}{2^j}\theta^j\Big(\sum_{i=2}^n\Big(\Big(\frac{r_i}{r_{i-1}}\Big)^2+\Big(\frac{r_i}{r_{i-2}}\Big)\Big)+\frac{2^j\epsilon}{r_n}+\frac{r_2}{2^j}\Big),
\end{split}
\end{equation}
here $r_n<\dots<r_2<r<1$ are parameters which are to be determined below.
Again, thanks to a shear (see e.g. \eqref{eq:shear} and \eqref{eq:3D-3O-3}) we define $u$ also on $\Omega_{j,k}^{(3)}$ and there it holds
\begin{equation}\label{eq:nD-nO-3}
\int_{\Omega_{j,k}^{(3)}}\dist^2(\nabla u_{j,k}^{(3)},K_{n+1})dx \lesssim \int_{\Omega_{j,k}^{(1)}}\dist^2(\nabla u_{j,k}^{(1)},K_{n+1})dx+\frac{r^3}{(8\theta)^j}.
\end{equation}
We then put $u$ equal to $u^{(1)}$ outside $\Omega_{j,k}^{(1)}$ and $\Omega_{j,k}^{(3)}$.
By \eqref{eq:nD-nO-2}, \eqref{eq:nD-nO-3} and summing (thanks to the condition on $\theta$) over $j$ and $k$, we have
$$
E_{\epsilon,n+1}(u,\chi) \lesssim \sum_{i=2}^n\Big(\frac{r_i}{r_{i-1}}\Big)^2+r^2+\sum_{i=3}^n\frac{r_i}{r_{i-2}}+r_2+\frac{\epsilon}{r_n},
$$
where $\chi$ is the projection of $\nabla u$ on $K_{n+1}$ and we have used that $\|\nabla\chi\|_{TV}\le C\|D^2 u\|_{TV}$.
An optimization argument yields that $r_i\sim r^i$ and $r\sim\epsilon^\frac{1}{n+2}$ and the result is proven.
\end{proof}

\begin{rmk}
We highlight that -- due to cut-off contributions on the coarsest scale --  an analogue of Lemma \ref{lem:nD-korder} pushed up to the $n$-th order of lamination would give
$$
E_{\epsilon,n+1}(u^{(n)},\chi^{(n)})\lesssim r+\frac{\epsilon}{r^n}
$$
which would not be enough to deduce the energy scaling of Theorem \ref{thm:Kn}.
The construction of $u^{(1)}$, which branches towards every direction orthogonal to the lamination reduces the elastic energy term from $r$ (which is due to the simple cut-off) to $r^2$ that is the term typical of branching constructions (with quadratic growth condition).

We conjecture that, in the general case of $p$-growth condition, concatenating branched lamination as $u^{(1)}$ the resulting test function $u$ would satisfy
$$
\int_{[0,1]^n}\dist^p(\nabla u,K_{n+1})dx+\epsilon\|D^2 u\|_{TV([0,1]^n)}\lesssim r^p+\frac{\epsilon}{r^n}
$$
giving rise to an energy scaling of $\epsilon^\frac{p}{p+n}$, analogous to that of the two-dimensional one (cf. Corollary \ref{cor:2D-korder-gen}).
\end{rmk}

\appendix

\section{Two-Dimensional Arbitrarily High Order Branching Constructions}

\label{sec:second-order}

Last but not least, in this section we analyze the energy scaling of branching constructions of arbitrarily high order, in general, two-dimensional rectangular domains and for general $p$-growth conditions.

\begin{lem}\label{lem:2D-korder-gen}
Let $L,H>0$ and $R=[0,L]\times[0,H]$ and let $k\in\N$.
Let $\{A_j\}_{j=1}^{k+1}$, $\{J_j\}_{j=3}^{k+1} \subset \diag(2,\R)$ be such that
$$
\rank(A_1-A_2)=1, \quad \rank(A_j-A_{j'})=2 \text{ for every } j\neq j', j,j'\ge2
$$
and
$$
 J_3=\lambda_3 A_1+(1-\lambda_3)A_2, \quad J_j=\lambda_j A_j+(1-\lambda_j) J_{j-1} \text{ for some } 0<\lambda_j<1, j\ge4.
$$
Let
$$
F=\begin{cases}
\lambda A_1+(1-\lambda)A_2 & k=1, \\
\lambda A_{k+1}+(1-\lambda)J_{k+1} & k\ge 2,
\end{cases}
$$
for some $0<\lambda<1$ and let $E_\epsilon$ be defined as in \eqref{eq:tot_p_en} with $K=\{A_j\}_{j=1}^{k+1}$.
Then, there exists $u^{(k)}\in W^{1,\infty}(R;\R^2)$, $\chi^{(k)}\in BV(R;K)$ with $u^{(k)}(x)=Fx$ for every $x\in\p R$ such that
\begin{equation}\label{eq:2D-iteration}
E_\epsilon(u^{(k)},\chi^{(k)}) \lesssim LH \Big(\sum_{j=1}^{k-1} \Big(\frac{r_{j+1}}{r_j}\Big)^p+\frac{\epsilon}{r_k}\Big)+\frac{L}{H^{p-1}}r_1^p
\end{equation}
for every arbitrary choice of parameters $0<r_k<\dots<r_2<r_1<\frac{1}{4}\min\{L,H\}$.
\end{lem}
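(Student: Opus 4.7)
The plan is to argue by induction on $k$, using Lemma \ref{prop:vert-bra} both as the base case and inside each refinement step of the inductive construction. For the base case $k=1$ we have $F=\lambda A_1+(1-\lambda)A_2$ with $\rank(A_1-A_2)=1$, so Lemma \ref{prop:vert-bra} applies directly with the choice $N\sim L/r_1$ (for $r_1$ small enough relative to $\min\{L,H\}$ this automatically satisfies $N>4L/H$). The resulting elastic contribution is $\sim Lr_1^p/H^{p-1}$ and the surface contribution is $\sim \epsilon HL/r_1=LH\,\epsilon/r_1$, matching the right-hand side of \eqref{eq:2D-iteration} with $k=1$ (empty sum).

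For the inductive step $k-1\Rightarrow k$, I would first observe that $J_{k+1}-A_{k+1}=(1-\lambda_{k+1})(J_k-A_{k+1})$ is rank-one, so $F=\lambda A_{k+1}+(1-\lambda)J_{k+1}$ can be rewritten as a convex combination of $A_{k+1}$ and $J_k$ with weight $\mu=\lambda+(1-\lambda)\lambda_{k+1}\in(0,1)$. I would then apply Lemma \ref{prop:vert-bra} with $A:=A_{k+1}$, $B:=J_k$, and $N\sim L/r_1$ to produce an outer first-order branching $u^{(1)}$ on $R$, whose phase indicator $\chi^{(1)}$ subdivides $R$ into $A_{k+1}$-affine cells (requiring no further action) and $J_k$-affine cells of two geometric types, exactly as $\omega_{j,k}^{(1)}$ and $\omega_{j,k}^{(3)}$ in \eqref{eq:cells-2order}, with dimensions $\ell_j\times h_j$, $\ell_j=\mu r_1/2^j$, $h_j=\theta^j H(1-\theta)/2$ and $\theta$ in the admissible range \eqref{eq:theta}. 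On each $J_k$-cell (after an affine shear as in \eqref{eq:shear2D} to reduce the slanted cells to axis-aligned rectangles, which costs an additional $\ell_j^{p+1}/h_j^{p-1}$ per cell as in \eqref{eq:2nd-ord-cell2}) I would apply the inductive hypothesis with rescaled fine parameters $r'_i=r_{i+1}/2^j$ for $i=1,\dots,k-1$, producing an embedded $(k-1)$-th order construction with boundary data $J_k x$ on the cell.

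The remaining work is bookkeeping the sums. Summing over the $N_j\sim 2^jL/r_1$ cells at generation $j$ produces three geometric series in $j$: the inner elastic terms give $LH\sum_{j=2}^{k-1}(r_{j+1}/r_j)^p$ via a $\sum\theta^j$; the inner surface terms give $LH\,\epsilon/r_k$ via a $\sum(2\theta)^j$; and both the shear error from \eqref{eq:shear2D} and the inductive boundary-layer term $L'(r'_1)^p/(H')^{p-1}$ contribute $Lr_1^p/H^{p-1}$ via $\sum(2^p\theta^{p-1})^{-j}$. The outer branching additionally contributes $Lr_1^p/H^{p-1}$ (elastic) and $LH\,\epsilon/r_1\le LH\,\epsilon/r_k$ (surface), which are absorbed. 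The main obstacle is checking that these three series converge simultaneously, which pins $\theta$ to the range $(2^{-p/(p-1)},1/2)$ for $p>1$ and $(0,1/2)$ for $p=1$, exactly as in \eqref{eq:theta}; a secondary technical point is that the inductive hypothesis must be applied with the roles of $(L',H')$ adapted to whether the rank-one direction of $A_{k+1}-J_k$ is $e_1$ or $e_2$, which is handled transparently by interchanging $x_1$ and $x_2$ in the construction.
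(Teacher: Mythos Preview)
Your approach matches the paper's: induction on $k$, with Lemma \ref{prop:vert-bra} supplying both the base case and the outermost first-order branching at each inductive step, and the inductive hypothesis then applied (after the shear \eqref{eq:shear2D} on the slanted cells) inside each $J_{k+1}$-cell with rescaled parameters $\rho_i\sim r_{i+1}/2^j$. One bookkeeping correction is needed: since consecutive lamination directions alternate (this is forced by the rank-two condition on $A_j-A_{j'}$ for $j,j'\ge 2$), the inductive boundary-layer term on a cell takes the form $\tfrac{h_j}{\ell_j^{\,p-1}}\rho_1^p$ rather than $\tfrac{\ell_j}{h_j^{\,p-1}}\rho_1^p$, and after summing over the $\sim L2^j/r_1$ cells per generation via $\sum_j\theta^j$ it yields $LH(r_2/r_1)^p$, not $Lr_1^p/H^{p-1}$. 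This is exactly the $j=1$ contribution missing from your inner-elastic sum $\sum_{j=2}^{k-1}$ and is already part of the target bound \eqref{eq:2D-iteration}, so the argument closes once this is corrected.
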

\begin{proof}
We prove the claimed result with an inductive procedure.

\emph{Induction base.}
The base of the induction $k=1$ is provided by Lemma \ref{prop:vert-bra}.
Indeed, consider $u^{(1)}\in W^{1,\infty}(R;\R^2)$ defined by Lemma \ref{prop:vert-bra} with $A=A_1$, $B=A_2$ and $N=\frac{L}{r_1}$.
We set $\chi^{(1)}$ to be the projection of $\nabla u^{(1)}$ onto $K$ and from \eqref{eq:prop-vbra} we have
\begin{equation}\label{eq:ind-1}
E_\epsilon(u^{(1)},\chi^{(1)})\lesssim \frac{L}{H^{p-1}}r_1^p+LH\frac{\epsilon}{r_1}.
\end{equation}

\emph{Inductive hypothesis.}
Now assume that, given $\omega=[0,\ell]\times[0,h]$ and $0<\rho_{k-1}<\dots<\rho_1<\frac{1}{4}\min\{\ell,h\}$ arbitrary parameters we can find $v^{(k-1)}$ with affine boundary condition $J_{k+1}$ and such that
\begin{equation}\label{eq:2D-iteration-IH}
\int_\omega\dist^p(\nabla v^{(k-1)},\{A_j\}_{j=1}^k)dx+\epsilon\|D^2v^{(k-1)}\|_{TV(\omega)} \lesssim \ell h\Big(\sum_{j=1}^{k-2}\Big(\frac{\rho_{j+1}}{\rho_j}\Big)^p+\frac{\epsilon}{\rho_{k-1}}\Big)+\frac{h}{\ell^{p-1}}\rho_1^p.
\end{equation}

\emph{Inductive step.}
Let $\omega_{j,k}$ be the subdomains defined in \eqref{eq:cells-bra}, that are rectangles of dimensions $\ell_j\times h_j$ with
$$
\ell_j=\frac{r}{2^j}, \quad h_j=H\frac{1-\theta}{2}\theta^j.
$$
Mimicking what has been done for the second order branching construction (Step 2 of the proof of the upper bound of Theorem \ref{thm:K3}) we define $u^{(k)}$ by substituting to $u^{(1)}$ the function $v^{(k-1)}$ on $\{\chi^{(1)}=J_3\}$, that is the union of $\omega_{j,k'}^{(1)}$ and $\omega_{j,k'}^{(3)}$ as defined in \eqref{eq:cells-2order}.

Therefore, we define $v_j^{(k-1)}$ by applying the inductive hypothesis on $\omega=\omega_{j,k'}^{(1)}$ taking $\rho_j=\lambda_j\frac{r_{j+1}}{2^{j+2}}$.
Then $v_j^{(k-1)}$ is defined on $\omega_{j,k'}^{(3)}$ thanks to a shear (see e.g. \eqref{eq:shear2D}) and equals $u^{(1)}$ elsewhere.
We thus get
\begin{align*}
&\int_{\omega_{j,k'}}\dist^p(\nabla v_j^{(k-1)},K)dx+\epsilon\|D^2 v_j^{(k-1)}\|_{TV(\omega_{j,k'})} \\
& \qquad \lesssim \frac{H\theta^jr_1}{2^j}\Big(\sum_{l=2}^k\Big(\frac{r_{l+1}}{r_l}\Big)^p+2^j\frac{\epsilon}{r_k}\Big)+\frac{H\theta^j(2^{p-1})^j}{r_1^{p-1}}\Big(\frac{r_2}{2^j}\Big)^p+\frac{r_1^{p+1}}{H^{p-1}(2^{p+1}\theta^{p-1})^j},
\end{align*}
where the last term comes from the analogue of equation \eqref{eq:2nd-ord-cell2}.
Again, since for every $j$ there are $\frac{L2^j}{r_1}$ copies of $\omega_{j,k'}$, the overall contribution is
$$
E_\epsilon(u^{(k)},\chi^{(k)}) \lesssim \sum_{j=1}^{j_0} \Big(LH\theta^j\sum_{l=2}^k\Big(\frac{r_{l+1}}{r_l}\Big)^p+LH(2\theta)^j\frac{\epsilon}{r_k}+LH\theta^j\Big(\frac{r_2}{r_1}\Big)^p+\frac{L}{H^{p-1}}\Big(\frac{1}{2^p\theta^{p-1}}\Big)^j r_1^p\Big)
$$
which yields \eqref{eq:2D-iteration} for $\theta$ satisfying \eqref{eq:theta}, i.e. $\frac{1}{2^\frac{p}{p-1}}<\theta<\frac{1}{2}$.
\end{proof}

Thanks to an optimization argument in the parameters $r_j$ and $\epsilon$, it is straightforward to infer the following result.

\begin{cor}\label{cor:2D-korder-gen}
Let $k\in\N$ and let $K$, $F$ and $E_\epsilon$ be as in the statement of Lemma \ref{lem:2D-korder-gen}.
Then, for every $p\in[1,\infty)$ and $\epsilon\in(0,1)$ there exist $u_\epsilon\in W^{1,\infty}((0,1)^2;\R^2)$ and $\chi_\epsilon\in BV((0,1)^2;K)$ such that $u_\epsilon(x)=Fx+b$ for every $x\in\R^2\setminus(0,1)^2$ and $b\in\R^2$ and
$$
E_\epsilon(u_\epsilon,\chi_\epsilon)\lesssim \epsilon^\frac{p}{k+p}.
$$
\end{cor}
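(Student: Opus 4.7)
The plan is to directly apply Lemma \ref{lem:2D-korder-gen} with $L=H=1$ on the unit square and then optimize the free parameters $0<r_k<\dots<r_1<\tfrac{1}{4}$ as a function of $\epsilon$. Translation invariance of the energy (under the shift $u(x)\mapsto u(x)+b$) reduces the case of general $b\in\R^2$ in the boundary condition $u_\epsilon(x)=Fx+b$ to the case $b=0$, so we may assume $b=0$ throughout.

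Specializing \eqref{eq:2D-iteration} to $L=H=1$, we obtain a test pair $(u^{(k)},\chi^{(k)})$ satisfying $u^{(k)}(x)=Fx$ on $\p(0,1)^2$ and
\begin{equation*}
E_\epsilon(u^{(k)},\chi^{(k)})\lesssim \sum_{j=1}^{k-1}\Big(\frac{r_{j+1}}{r_j}\Big)^p+\frac{\epsilon}{r_k}+r_1^p.
\end{equation*}
The natural self-similar ansatz is to make each ratio $r_{j+1}/r_j$ equal, i.e.\ to choose $r_j=r^j$ for a single parameter $r\in(0,\tfrac14)$. Under this ansatz all $k-1$ summands in the first sum coincide with $r^p$, and likewise $r_1^p=r^p$, so that the bound simplifies to
\begin{equation*}
E_\epsilon(u^{(k)},\chi^{(k)})\lesssim k\,r^p+\frac{\epsilon}{r^k}.
\end{equation*}
Here the constant in $\lesssim$ depends only on $p$, $k$ and $K$, but not on $\epsilon$ or $r$.

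It remains to optimize in $r$. Balancing the elastic-type contribution $r^p$ against the surface-type contribution $\epsilon/r^k$ leads to $r^{p+k}\sim\epsilon$, i.e.\ the choice $r\sim\epsilon^{1/(p+k)}$, which lies in $(0,\tfrac14)$ provided $\epsilon$ is small enough (for $\epsilon$ bounded away from zero the bound is trivial up to enlarging the implicit constant). Substituting back yields
\begin{equation*}
E_\epsilon(u^{(k)},\chi^{(k)})\lesssim \epsilon^{\frac{p}{p+k}},
\end{equation*}
and the ordering $0<r_k<\dots<r_1<\tfrac14$ required by Lemma \ref{lem:2D-korder-gen} is automatically satisfied since $r<1$. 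Defining $u_\epsilon:=u^{(k)}$ and $\chi_\epsilon:=\chi^{(k)}$ (and translating by $b$ to treat the general affine boundary datum) concludes the proof.

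There is essentially no obstacle here: the corollary is a scaling optimization on top of the already established Lemma \ref{lem:2D-korder-gen}. The only mild point to check is that the self-similar choice $r_j=r^j$ is indeed (asymptotically) optimal — but a straightforward Lagrange-multiplier computation for minimizing $\sum_{j=1}^{k-1}(r_{j+1}/r_j)^p+r_1^p+\epsilon/r_k$ in $r_1,\dots,r_k$ shows that at a critical point all $k$ ``elastic'' terms are comparable, forcing the geometric progression $r_j\sim r^j$ up to constants, so no improvement is lost by this ansatz.
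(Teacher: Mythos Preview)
Your proof is correct and follows exactly the approach indicated in the paper, which merely states that the corollary follows from Lemma \ref{lem:2D-korder-gen} ``thanks to an optimization argument in the parameters $r_j$ and $\epsilon$'' without spelling out the details. Your explicit choice $r_j=r^j$ with $r\sim\epsilon^{1/(p+k)}$ is precisely the intended optimization.
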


\begin{rmk}
\label{rmk:Lorent}
The result of Lemma \ref{cor:2D-korder-gen}, when taking $k=2$ and $p=1$, matches with the energy scaling behaviour proved by \cite{Lorent06} of a second-order laminate construction for elastic energy of linear growth.

As remarked in the introduction (see Section \ref{sec:Lpintro}), for $p=1$ laminates and branching have the same scaling order for every order of lamination (see also \cite[Section 2]{RT21} for a computation of scalings of laminates of arbitrarily high order).
\end{rmk}

\bibliographystyle{alpha}
\bibliography{citations1}

\end{document}